\let\mathcal\mathscr
\numberwithin{equation}{section}
\newtheorem{theorem}{Theorem}[section] 
\newtheorem{lemma}[theorem]{Lemma}
\newtheorem{proposition}[theorem]{Proposition}
\newtheorem{corollary}[theorem]{Corollary}
\newtheorem{conjecture}[theorem]{Conjecture}
\theoremstyle{definition}
\newtheorem*{acknowledgements}{Acknowledgements}
\newtheorem{remark}[theorem]{Remark}
\newtheorem*{notation}{Notation}
\newcommand{\e}{\mathrm{e}}
\renewcommand{\phi}{\varphi}
\renewcommand{\leq}{\leqslant}
\renewcommand{\geq}{\geqslant}
\renewcommand{\c}{\mathbf{c}}
\renewcommand{\b}{\mathbf{b}}
\renewcommand{\r}{\mathbf{r}}
\newcommand{\da}{\mathrm{d}\alpha}
\DeclareSymbolFont{bbold}{U}{bbold}{m}{n}
\DeclareSymbolFontAlphabet{\mathbbold}{bbold}
\newcommand{\md}[1]{  \left(\textnormal{mod}\ #1\right)}
\renewcommand{\P}{\mathbb{P}}
\newcommand{\Q}{\mathbb{Q}}
\newcommand{\F}{\mathbb{F}}
\newcommand{\N}{\mathbb{N}}
\newcommand{\R}{\mathbb{R}}
\newcommand{\Z}{\mathbb{Z}}
\renewcommand{\l}{\left}
\renewcommand{\r}{\right}
\renewcommand{\b}{\mathbf}
\renewcommand{\c}{\mathcal}
\renewcommand{\epsilon}{\varepsilon}
\renewcommand{\leq}{\leqslant}
\renewcommand{\geq}{\geqslant}
\title
[Rational points and prime values of polynomials] 
{Rational points and prime values of polynomials \\ in moderately many variables}
\author{Kevin Destagnol}
\address{
IST Austria\\
Am Campus 1\\
3400 Klosterneuburg\\
Austria}
\email{kevin.destagnol@ist.ac.at}
\author{Efthymios Sofos} 
\address{
Max Planck Institute for Mathematics\\
Vivatsgasse 7, Bonn, 53111, Germany}
\email{sofos@mpim-bonn.mpg.de}
\subjclass[2010]{11N32 (11P55, 14G05)}
\date{\today}
\begin{document}

\begin{abstract}
We   
derive the Hasse principle and weak approximation 
for 
fibrations 
of certain varieties
in the spirit of work by  
Colliot-Thélène--Sansuc
and 
Harpaz--Skorobogatov--Wittenberg.
Our varieties are defined through polynomials in many variables
and part of our work is devoted to establishing 
Schinzel's hypothesis
for polynomials of this kind. This last part is achieved by using arguments behind Birch's well-known result regarding the Hasse principle for complete intersections with the notable difference that we prove our result in 50\% fewer variables than in the classical Birch setting. We also study the problem of square-free values of an integer polynomial with 66.6\% fewer variables than in the Birch setting. 
\end{abstract}

\maketitle

\setcounter{tocdepth}{1}
\tableofcontents

\section{Introduction}
\label{introduction}

\subsection{Prime values of polynomials and rational points}
\label{s:apparithm}
 Let $n\geqslant 1$ be an integer and assume that $f \in \mathbb{Q}[t_1,\dots,t_n]$.
	Let $K_1,\ldots,K_r$ be cyclic extensions of $\Q$, denote
	the degree $[K_i:\Q]$ by $d_i$ and fix a basis $\{\omega_{1,i},\dots,\omega_{d_i,i}\}$ for $K_i$ as a vector space over $\mathbb{Q}$. We will denote
$$
	\mathbf{N}_{K_i/\mathbb{Q}}(\mathbf{x}_i)= N_{K_i/\mathbb{Q}}(x_{1,i}\omega_{1,i}+\cdots+x_{d_i,i}\omega_{d_i,i}), \quad (1\leqslant i \leqslant r)
$$
	where $N_{K_i/\mathbb{Q}}$ denotes the field norm. Let now the quasi-affine variety  $X\subset \mathbb{A}^n\times \mathbb{A}^{d_1}\times \cdots \times \mathbb{A}^{d_r}$  be defined via 
\begin{equation}
	X:
	\big(0 \neq f(t_1,\dots,t_n)=\mathbf{N}_{K_1/\mathbb{Q}}(\mathbf{x}_1)=\cdots=\mathbf{N}_{K_r/\mathbb{Q}}(\mathbf{x}_r) \big)
		\label{def:nonaff}
	\end{equation}
	and  let $V$ be a smooth proper model of the affine subvariety of $\mathbb{A}^n\times \mathbb{A}^{d_1}\times \cdots \times \mathbb{A}^{d_r}$ given by   
	\begin{equation}
	 f(t_1,\dots,t_n)=\mathbf{N}_{K_1/\mathbb{Q}}(\mathbf{x}_1)=\cdots=\mathbf{N}_{K_r/\mathbb{Q}}(\mathbf{x}_r) 
	.
		\label{def:nonaff2}
	\end{equation}
The Hasse principle and weak approximation for 
varieties of this kind 
have been the object of intensive study.
There are cases where the Hasse principle and weak approximation hold and 
there are examples for which they fail, \cite{Colliot}. 
However, it has been conjectured by Colliot-Thélène  \cite{Colliot2} that all such failures are accounted for by the Brauer--Manin obstruction.

The main objective of this paper is to study the Hasse principle and weak approximation for the class of varieties defined by (\ref{def:nonaff}) and (\ref{def:nonaff2}) under the restriction that the polynomial $f$ is an irreducible form and has many variables compared to its degree, but only moderately so as it will appear in due course. To this end, information on
prime values 
assumed by integer
polynomials can be exploited.
The prototypical example is due to Hasse~\cite{Ha},
whose proof of the Hasse principle for smooth
quadratic forms in four variables relies on Dirichlet's theorem on primes in arithmetic progressions combined 
with the global reciprocity law and the Hasse principle for non-singular
quadratic forms in three variables. 
This fibration argument was later generalized 
in an important work
by Colliot-Thélène and Sansuc~\cite{Colliot} to establish that, 
conditionally under Schinzel's hypothesis, various 
pencils of varieties over $\mathbb{Q}$ satisfy the Hasse principle and weak approximation. 
Their result was then extended by many authors,
see the introduction of~\cite{WSko} for a list of relevant references. 
%
Theorem~\ref{thm:main} below will allow 
us
to replace Schinzel's hypothesis in order to prove unconditionally the Hasse principle and weak approximation for the varieties defined by (\ref{def:nonaff}) and (\ref{def:nonaff2}) in the case of an irreducible form $f$ with moderately many 
variables compared to its degree.
\noindent
Let us conclude by mentioning that unconditional proofs in this subject
exist in  cases where the underlying polynomials 
have small degree (see for instance \cite[Th. 9.3]{CTSSw}) or special factorisation over $\Q$. For example, polynomials that are 
completely split over $\mathbb{Q}$ are treated in~\cite{BLS}.
Our main result provides an example where the polynomial
needs to have moderately many variables compared to its degree
but has no restriction on its shape. 

For a homogeneous
polynomial $f\in \Z[x_1,\ldots,x_n]$ 
that is irreducible in
  $\Q[x_1,\ldots,x_n]$
we let $\sigma_f$ be the dimension of the singular locus of $f=0$, namely the dimension of the affine variety cut out by the system of equations $\nabla f(x_1,\dots,x_n)=0$ (see \cite[pg. 250]{birch}).
Observe that $0\leq \sigma_f \leq n-1$ and that $\sigma_f=0$
if and only if the projective variety defined by $f$ is non-singular.

\begin{theorem}
	\label{hpp}
	Let $f,K_i$ and $X$ be as in (\ref{def:nonaff}) with $f$ an irreducible form
	and assume that 
	$$n-\sigma_f\geq \max\{4,1+2^{\deg(f)-1}(\deg(f)-1)\}.$$
Then $X$ satisfies the Hasse principle and weak approximation. In particular, $X(\Q)$ is Zariski dense as soon as it is non-empty.
\end{theorem}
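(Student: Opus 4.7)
The plan is to apply the fibration method of Colliot-Thélène--Sansuc~\cite{Colliot}, in a form appropriate for multi-norm fibrations (as developed by Harpaz--Skorobogatov--Wittenberg and related works), to the projection $\pi\colon X \to \mathbb{A}^n$ onto the coordinates $(t_1,\dots,t_n)$. Over any $\mathbb{Q}$-point $t \in \mathbb{A}^n$ with $f(t)\neq 0$, the fibre of $\pi$ decomposes as a direct product $\prod_{i=1}^r Y^{(i)}_t$, where each $Y^{(i)}_t$ is the affine variety $\mathbf{N}_{K_i/\mathbb{Q}}(\mathbf{x}_i) = f(t)$ associated to the cyclic extension $K_i/\mathbb{Q}$. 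By the Hasse norm theorem each $Y^{(i)}_t$ satisfies the Hasse principle whenever $f(t)\neq 0$, and weak approximation for such torsors over cyclic extensions is classical, so the smooth fibres of $\pi$ satisfy both the Hasse principle and weak approximation.

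Given an adelic point $(P_v)_v \in X(\mathbf{A}_{\mathbb{Q}})$ and a finite set $S$ of places, the fibration argument reduces the task of producing a $\mathbb{Q}$-point of $X$ near $(P_v)_v$ to producing a $\mathbb{Q}$-point $t_0 \in \mathbb{A}^n(\mathbb{Q})$ that is $v$-adically close to $(t(P_v))_{v\in S}$ and such that $f(t_0)$ equals, up to a controlled nonzero rational factor, a prime $p$ whose splitting type in each $K_i$ matches the local solvability pattern prescribed by $(P_v)_v$ at the bad places of the $Y^{(i)}$. This requirement is exactly a quantitative, localized form of Schinzel's hypothesis for $f$, and it is supplied by Theorem~\ref{thm:main} of this paper under the standing hypothesis $n-\sigma_f \geq \max\{4,1+2^{\deg(f)-1}(\deg(f)-1)\}$. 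Once such $t_0$ has been produced, the fibre $\pi^{-1}(t_0)$ is smooth and everywhere locally solvable by construction, hence contains a $\mathbb{Q}$-point close to $(P_v)_v$ in the fibre by the Hasse principle and weak approximation for the $Y^{(i)}_{t_0}$.

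The hard part will be to verify that the Brauer--Manin obstruction to the Hasse principle and weak approximation on $X$ vanishes unconditionally: the irreducibility of $f$ together with the cyclicity of each $K_i$ should imply that the vertical Brauer group of $\pi$ consists only of classes constant on the local fibres, so that the fibration method produces $\mathbb{Q}$-points without the intervention of a Brauer obstruction. A secondary difficulty is to translate the adelic data $(P_v)_v$ into the precise congruence and archimedean conditions required on $t_0$, and to check that these fit within the class of admissible local conditions for which Theorem~\ref{thm:main} is stated; standard enlargements of $S$ and of the modulus should handle this uniformly. Zariski density of $X(\mathbb{Q})$ then follows immediately from weak approximation applied at infinitely many distinct base points.
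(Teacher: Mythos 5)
Your overall architecture is the right one and matches the paper's: fibre over the $t$-coordinates, replace Schinzel's hypothesis by Theorem~\ref{thm:main} to land on a value $f(t_0)=\ell u$ with $\ell$ a single new prime and $u$ an $S$-unit, and then solve the norm equations in the fibre using the Hasse norm principle and weak approximation for the norm-one tori of the cyclic extensions $K_i$. This is exactly the Colliot-Th\'el\`ene--Sansuc scheme that the paper implements via Proposition~\ref{analogue} and the five-step argument of Section~\ref{s:prfinal}.

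However, there is a genuine gap in how you propose to make the fibre over $t_0$ everywhere locally solvable. You ask for a prime $p$ \emph{whose splitting type in each $K_i$ matches the local solvability pattern}. Theorem~\ref{thm:main} gives no Chebotarev-type control whatsoever over the Frobenius of the prime $\ell$ in the fields $K_i$; it is a pure counting statement for prime values of $f$. If the argument genuinely required prescribing the splitting of $\ell$, it could not be closed with the tools of this paper. The correct mechanism, and the one the paper uses, is the global reciprocity law: after enlarging $S$ so that each $K_i/\mathbb{Q}$ is unramified outside $S$, the value $f(\lambda)=s\ell u$ is a local norm from $K_i$ at every place of $S$ (by $v$-adic closeness to the given adelic point and openness of the local norm groups), and a unit hence a local norm at every place outside $S\cup\{\ell\}$; the product formula for the norm residue symbol of the cyclic extension $K_i/\mathbb{Q}$ then forces $f(\lambda)$ to be a local norm at $\ell$ as well, with no information about the splitting of $\ell$ needed. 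This is also why it is essential that exactly \emph{one} prime outside $S$ appears in the factorisation. Relatedly, your ``hard part'' --- showing the vertical Brauer group imposes no obstruction --- is not where the difficulty lies: for cyclic $K_i$ the reciprocity argument above does all the work, and no Brauer group computation appears in the proof. The remaining genuine technical points you underplay are the positivity/sign bookkeeping at the real place (needed when some $K_i$ is totally imaginary), the elimination of fixed prime divisors of the shifted polynomial $g(\mathbf{x})=f(\boldsymbol{\lambda}_0+A^N\mathbf{x})$ via Lemma~\ref{fpf}, and the final rescaling by an $L$-th power to approximate the prescribed $\mathbf{x}_{i,v}$, which the paper handles in its fourth and fifth steps.
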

Note that, thanks to the fact that our Theorem~\ref{thm:main} below (which is the main ingredient of the proof of Theorem~\ref{hpp})
holds in half as many variables as in the work of Birch,
a direct application of~\cite[\S 7,Th. 1]{birch} would not prove Theorem~\ref{hpp}.
Our strategy will be to establish an analogue of \cite[Prop. 1.2]{WSko} and then to adapt the argument in the
proof of \cite[Th. 1.3]{WSko}. 

\indent
Like
in the proof of~\cite[Th. 9.3]{CTSSw},
one can deduce weak approximation for the variety $V$ defined by (\ref{def:nonaff2}) from Theorem~\ref{hpp}, since weak approximation is a birational invariant of smooth varieties and hence it is enough to establish the result for the smooth model of $V$ provided by $X$. We then conclude the proof of Corollary~\ref{corr} by alluding to the fact that the Hasse principle and Zariski density by the existence of a rational point are consequences of weak approximation.
\begin{corollary}
	Keep the assumptions of Theorem~\ref{hpp}
	and
	let $V$ be 
	as
	in (\ref{def:nonaff2}). 
Then $V$ satisfies the Hasse principle and weak approximation. In particular, $V(\Q)$ is Zariski dense as soon as it is non-empty.
\label{corr}
\end{corollary}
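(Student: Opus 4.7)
The plan is essentially the one laid out in the paragraph preceding the corollary statement. By construction, $V$ is a smooth proper model of the affine variety $W$ defined by (\ref{def:nonaff2}), while the quasi-affine variety $X$ from (\ref{def:nonaff}) is the open subvariety of $W$ obtained by deleting the closed locus $\{f=0\}$. In particular $X$, $W$ and $V$ are pairwise $\mathbb{Q}$-birational, which is the bridge that will allow us to transfer the conclusions of Theorem~\ref{hpp} from $X$ to $V$.

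The first step I would carry out is to invoke the birational invariance of weak approximation among smooth proper $\mathbb{Q}$-varieties, precisely as in the proof of~\cite[Th. 9.3]{CTSSw}. This invariance rests on the fact that a $\mathbb{Q}$-birational map between two smooth proper varieties is defined on a common Zariski-open subset of each, and by smoothness any adelic point can be deformed into that open subset by arbitrarily small perturbations at each place using the implicit function theorem. Inserting a smooth proper model of $X$ common to $V$, one transfers weak approximation from $X$, established in Theorem~\ref{hpp}, to $V$.

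Having obtained weak approximation for $V$, the remaining assertions are formal consequences. The Hasse principle follows at once: if $V(\mathbb{A}_{\mathbb{Q}})\neq\emptyset$, then $V(\mathbb{Q})$ is dense in the non-empty product $\prod_v V(\mathbb{Q}_v)$ and hence non-empty. Zariski density of $V(\mathbb{Q})$ is likewise automatic, as density of $V(\mathbb{Q})$ in $V(\mathbb{R})$ for the real topology forces Zariski density in the irreducible components containing a real point, the real topology being finer than the Zariski topology on a smooth geometrically integral variety.

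The main (and indeed only) subtle point in this plan is the careful verification that the birational transfer works when the source variety $X$ is quasi-affine rather than proper; this is handled in standard fashion by routing the argument through a smooth proper model birational to both $X$ and $V$, after which the adelic deformation argument is carried out between two smooth proper $\mathbb{Q}$-varieties as usual.
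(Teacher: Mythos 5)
Your proposal is correct and follows essentially the same route as the paper: transfer weak approximation from $X$ to $V$ via birational invariance for smooth varieties (as in the proof of \cite[Th.~9.3]{CTSSw}), then deduce the Hasse principle and Zariski density as formal consequences. The only difference is cosmetic — you interpose an auxiliary smooth proper model to handle the quasi-affineness of $X$, whereas the paper simply regards $X$ itself as the smooth model of $V$ — and this does not change the substance of the argument.
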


\subsection{Primes represented by polynomials
}
\label{s:gensha}
As mentioned above, a key tool in our proof of Theorem~\ref{hpp} is a generalization of Schinzel's hypothesis for polynomials in moderately 
many variables compared to its degree. 
Let 
$f\in \Z[x_1,\ldots,x_n]$ be a, not necessarily homogeneous, polynomial that is irreducible in 
$\Q[x_1,\ldots,x_n]$
and
denote by 
$f_0$
the top degree
part of~$f$.
We define $\sigma_f:=\sigma_{f_0}$. For a 
non-empty
compact box
$\c{B}\subset \R^n$ 
with the property that
$f_0(\c{B})\subset  
(1,\infty), 
$
we
define 
\begin{equation}
\label{def:lif}
\pi_f(\c{B})
:=\#\{\b{x} \in \Z^n \cap \c{B}:f(\b{x}) \text{ is a positive
prime}\}
\
\
\text{ and } 
\
\
\mathrm{Li}_f(\c{B})
:=\int_{\c{B}}\frac{\mathrm{d}\b{x}}{\log f_0(\b{x})}
.\end{equation}
Our main result in this section is the following theorem.

\begin{theorem}
\label{thm:main}
Assume that $f\in \Z[x_1,\ldots,x_n]$ 
is any integer 
polynomial
which is irreducible in
 $\Q[x_1,\ldots,x_n]$
and let 
$\c{B}\subset \R^n$ be any non-empty compact
box with  $f_0(\c{B})\subset 
(0,\infty)$. 
If 
\begin{equation}
\label{halfassumption}
n-\sigma_f 
\geq \max\{4,(\deg(f)-1) 2^{\deg(f)-1}+1\}
,\end{equation}
then for every fixed $A>0$ the following holds for all sufficiently large 
$P$,
\[
\pi_f(P\c{B})=
\left(
\prod_{p\ \text {prime }}
\frac{
(1-p^{-n}\#\{
\b{x}\in \F_p^n:f(\b{x})=0\})
}
{(1-1/p)}
\right)
\mathrm{Li}_f(P\c{B}) 
+O_{A,\c{B},f}
\l(\frac{P^n}{ (\log P)^{A} }   \r)
 \] where the implied constant  depends at most on $A,\c{B}$ and $f$.
\end{theorem}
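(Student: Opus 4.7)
The plan is to establish the asymptotic via the Hardy--Littlewood circle method combined with a Vaughan-type decomposition of the von Mangoldt function, in the spirit of Birch's treatment of the exponential sum $S(\alpha) = \sum_{\b{x} \in \Z^n \cap P\c{B}} e(\alpha f(\b{x}))$. Introduce $\psi_f(P\c{B}) := \sum_{\b{x} \in \Z^n \cap P\c{B}} \Lambda(f(\b{x}))$, which is equivalent to $\pi_f(P\c{B}) \log M$ up to acceptable error by partial summation, where $M$ is a bound for $|f|$ on $P\c{B}$. Orthogonality yields
\[
\psi_f(P\c{B}) = \int_0^1 S(\alpha)\, T(-\alpha)\, \d\alpha,
\]
where $T(\alpha) = \sum_{m \leq M} \Lambda(m) e(\alpha m)$. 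Dissect $[0,1]$ into Farey-type major arcs $\mathfrak{M}$ around rationals $a/q$ with $q \leq P^\theta$ and narrow radius, and into the complementary minor arcs $\mathfrak{m}$, for a small parameter $\theta > 0$ depending on $\deg(f)$.

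On the major arcs, Birch's approximation $S(\alpha) \approx q^{-n}\, S_{a,q}(f)\, I(\alpha - a/q)$ (where $S_{a,q}(f)$ denotes the complete exponential sum modulo $q$ and $I(\beta)$ the singular integral), combined with the Siegel--Walfisz theorem to evaluate $T(\alpha)$, delivers the anticipated main term. Multiplicativity of $S_{a,q}(f)$ in $q$ and the passage from complete exponential sums to local point counts produce the Euler product $\prod_p (1 - p^{-n}\#\{\b{x} \in \F_p^n : f(\b{x}) = 0\})/(1 - 1/p)$, while the singular integral combined with the logarithmic weight coming from partial summation produces $\mathrm{Li}_f(P\c{B})$.

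On the minor arcs --- where the principal difficulty lies --- apply Vaughan's identity to $T(\alpha)$ to split it into Type I and Type II bilinear components. Type I terms reduce to counting $\b{x} \in P\c{B}$ with $d \mid f(\b{x})$ for $d$ up to a suitable power of $P$, handled via Birch's circle method adapted to congruence conditions. Type II terms, after Cauchy--Schwarz on one of the bilinear variables, reduce to second-moment estimates of $S(\alpha)$ over $\mathfrak{m}$, which are controlled by Weyl differencing as in~\cite{birch}. The decisive improvement is that the bilinear factorisation arising from Vaughan's identity supplies one extra layer of averaging, so that only $2^{\deg(f)-1}$ Weyl differencings suffice in place of Birch's $2^{\deg(f)}$; this is precisely what produces the halved threshold in (\ref{halfassumption}).

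The principal obstacle will be verifying that the bilinear averaging from Vaughan's identity genuinely substitutes for a full Weyl differencing step, while simultaneously controlling the Type I contributions under the weakened variable hypothesis. One must also interface the Siegel--Walfisz input on the major arcs with Birch's uniformity in $q$, so that the final error term achieves the claimed strength $O(P^n (\log P)^{-A})$ for arbitrary $A$.
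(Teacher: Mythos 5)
Your overall architecture is right and you have correctly located the source of the saving in a second moment of $S(\alpha)$ on the minor arcs, but the mechanism you propose for reaching that second moment is both unnecessary and problematic, and it hides a genuine gap. The paper does not decompose the prime-detecting sum via Vaughan's identity on the minor arcs at all: writing $\pi_f(P\c{B})=\int_0^1 S(\alpha)\overline{W(\alpha)}\,\mathrm{d}\alpha$ with $W(\alpha)=\sum_p \e(\alpha p)$ over primes $p\asymp P^d$, a single application of Cauchy--Schwarz together with Parseval, $\int_0^1|W|^2=\#\{p\asymp P^d\}\ll P^d/\log P$, converts the minor-arc problem from bounding $\int|S|$ into bounding $(\int|S|^2)^{1/2}\cdot P^{d/2}$. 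The sparsity of the primes alone supplies the square-root saving; no bilinear Type~I/Type~II structure is needed, and the number of Weyl differencings is $d-1$ in both Birch's argument and here (your statement that one passes from $2^{\deg f}$ to $2^{\deg f-1}$ differencings misdescribes what happens --- the exponent $2^{d-1}$ in the Weyl bound is unchanged; it is the passage from $L^1$ to $L^2$ that doubles the effective saving). If you insist on the Vaughan route, the Type~I sums become a real obstacle: they require asymptotics for $\#\{\b{x}\in\Z^n\cap P\c{B}: e\mid f(\b{x})\}$ uniformly for moduli $e$ up to a power of $P^d$, i.e.\ a level-of-distribution statement for $f$ that is not available ``via Birch's circle method adapted to congruence conditions'' under the halved hypothesis~\eqref{halfassumption}; this step would fail or at best require the full Birch variable count.

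The second, independent gap is the singular series. Under~\eqref{halfassumption} Birch's own bound $S_{a,q}\ll q^{n-(n-\sigma_f)/(2^{d-1}(d-1))+\epsilon}$ no longer yields convergence of $\sum_q q^{-n}\sum_{a}|S_{a,q}|$, so the major-arc analysis does not close by simply ``quoting Birch with Siegel--Walfisz.'' The paper must prove this convergence separately (Proposition~\ref{deligne}), using Deligne's bound to get $T_f(p)\ll p^{1-(n-\sigma_f)/2}$ for prime moduli and the Browning--Heath-Brown estimate $S_{a,p^k}\ll_k p^{(k-1)n+\sigma_f}$ for the intermediate prime powers $2\le k\le 2^{d-1}(d-1)$; this is precisely where the hypotheses $n-\sigma_f\ge 4$ and $n-\sigma_f\ge (d-1)2^{d-1}+1$ are consumed. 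Your proposal omits this entirely. A third, more minor omission: between the wide Birch arcs $\c{M}'(\theta_0)$ (with $q$ up to a power of $P$) and the narrow arcs on which Siegel--Walfisz applies (with $q\le(\log P)^A$) there is an intermediate range where one needs Vaughan's estimate for $W(\alpha)$ itself to win a power of $\log P$; this is the only place Vaughan's method enters the paper, and it is for bounding $W$, not for restructuring the minor arcs.
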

Note that the assumption 
$f_0(\c{B})\subset 
(0,\infty)$ shows that for all sufficiently large $P$
every $\b x\in P\c B$ satisfies $f_0(\b x) \gg P^{\deg(f)}>1$, thus $\mathrm{Li}_f(P\c{B}) $
is well-defined.
We shall see in Lemma~\ref{lem:laurent} that $
\mathrm{Li}_f(P\c{B})
-
\mathrm{vol}(\c{B})
P^n/
\log(P^{\deg(f)}
) \ll_{f,\c{B}}
P^n/(\log P)^2
$, hence \begin{equation}\label
{eq:collusion}
\hspace{-0,2cm}
\pi_f(P\c{B})
\!=\!
\frac{\mathrm{vol}(\c{B})}{\deg(f)}
\left(
\prod_{p\ \text {prime }}
\!
\frac{
(1-p^{-n}\#\{
\b{x}\in \F_p^n:f(\b{x})=0\})
}
{(1-1/p)}
\right)
\frac{P^n}{\log P}
+O_{f,\c{B}}
\l(\frac{P^n}{ (\log P)^{2} }   \r).
\end{equation}
\indent
Bateman and Horn~\cite{BatH} provided heuristics that led to a conjecture regarding 
the prime values of 
an integer polynomial in a single variable.
One can modify their heuristics in the case that the polynomial has arbitrarily many variables,
thus resulting in an analogous conjecture regarding the prime values of an integer polynomial in many variables. We refer the reader to Appendix~\ref{s:bathorco} for a quick overview of Schinzel's hypothesis, the Bateman-Horn conjecture and their generalisations. Theorem~\ref{thm:main} then establishes the analogous conjecture provided that the polynomial has sufficiently many variables compared to its degree.

There are currently no
available techniques capable of
settling any case of the Bateman--Horn conjeture
in one variable
apart from the case of one
linear polynomial, which is Dirichlet's theorem for primes in arithmetic progressions.
Efforts have therefore focused on settling such problems for polynomials in  more variables.
Notable examples in
cases with $n=2$ 
are 
Iwaniec's work~\cite{I} for quadratic polynomials, 
Fouvry--Iwaniec's work~\cite{FII} for $x_1^2+x_2^2$ with $x_2$ prime,
Friedlander--Iwaniec's work~\cite{FI} for $x_1^2+x_2^4$ ,
Heath-Brown's work~\cite{HB} for $x_1^3+2x_2^3$,
Heath-Brown--Moroz's work~\cite{HBM} for binary cubic forms
and the recent work of
Heath-Brown--Li~\cite{HBL} on $x_1^2+x_2^4$ with $x_2$ prime.
The special shape of these polynomials 
plays a central r\^ole in the proofs of these results;
they are all related to norms of a number field.
In cases with $n>2$ 
it should be noted that 
Green--Tao--Ziegler~\cite{GTZ}
studied
simultaneous prime values of certain linear polynomials
by a variety of methods,
Friedlander and Iwaniec~\cite{MR2486486} 
studied the prime values of $x_1^2+x_2^2+x_3^2$ 
via the class number formula of Gauss,
while
Maynard's work~\cite{Maynard}
employs 
geometry of numbers
to
cover the case of
incomplete norm forms.

It is therefore 
a natural question 
whether the problem 
of representing 
primes by polynomials
can be studied 
for polynomials with no special shape. Let us recall here that one of the important
theorems in the frontiers between analytic number theory and 
Diophantine geometry 
concerns the Hasse principle 
for
systems of polynomials in many variables and with no special shape
by Birch~\cite{birch}.
To prove 
Theorem~\ref{thm:main} 
we shall employ the Hardy--Littlewood
circle method in the form used by Birch
and use several of his estimates. 

While Birch's work applies to  every 
non-singular
homogeneous polynomial $f$ having at least 
$
n
\geq
(\deg(f)-1) 2^{\deg(f)}+1
$
variables
(which was
recently improved 
by Browning and Prendiville~\cite{TimSean}
to 
$
n
\geq
(\deg(f)-\sqrt{\deg(f)}/2) 2^{\deg(f)}
$
),
the assumption~\eqref{halfassumption} of our
Theorem~\ref{thm:main}
is  less
restrictive,
as it allows for half as many variables.  
The
improved range
is due to
the use of 
$L_2$-norm
inequalities
in the minor arcs,
as well as 
bounds for exponential sums
due to Browning--Heath-Brown~\cite{41}
and 
Deligne~\cite{deligne}
to show that the singular series 
in Birch's work 
converges absolutely in the range~\eqref{halfassumption}. 

Let us finally give a direct consequence of 
Theorem~\ref{thm:main}.
\begin{corollary}
\label{cor:kebab}
Let  $f\in \Z[x_1,\ldots,x_n]$ 
be an
integer 
homogeneous
polynomial
which is irreducible in
 $\Q[x_1,\ldots,x_n]$
and assume that  $ n-\sigma_f 
\geq \max\{4,(\deg(f)-1) 2^{\deg(f)-1}+1\}.
$ 
Then $f(\b x)$
takes infinitely many distinct 
positive
prime
values 
as $\b x$ ranges over $ \Z^n$ 
if and only if 
$ 
f(\R^n)$ is not included in
$(-\infty,0] $
 and  for every prime $p$
the set 
$f(\Z^n)$ is not included in
$p\Z$.
\end{corollary}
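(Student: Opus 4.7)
The ``only if'' direction is elementary. If $f(\R^n)\subset(-\infty,0]$ then $f(\Z^n)$ contains no positive integer, and hence no positive prime. If $f(\Z^n)\subset p\Z$ for some prime $p$, then any positive prime in $f(\Z^n)$ must equal $p$, yielding at most one distinct positive prime value.

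For the ``if'' direction, the plan is to apply Theorem~\ref{thm:main}. Since $f$ is homogeneous, $f_0=f$. The hypothesis $f(\R^n)\not\subset(-\infty,0]$ furnishes a point $\y_0\in\R^n$ with $f(\y_0)>0$; by continuity of $f$, one can choose a non-empty compact box $\c{B}\subset\R^n$ centred at $\y_0$ with $f(\c{B})\subset(0,\infty)$. Theorem~\ref{thm:main}, in the form \eqref{eq:collusion}, then gives
\[
\pi_f(P\c{B})=\frac{\mathrm{vol}(\c{B})}{\deg(f)}\,\mathfrak{S}\,\frac{P^n}{\log P}+O_{f,\c{B}}\!\left(\frac{P^n}{(\log P)^{2}}\right),
\]
where $\mathfrak{S}:=\prod_p(1-p^{-n}N_p)/(1-p^{-1})$ and $N_p:=\#\{\b{x}\in\F_p^n:f(\b{x})=0\}$.

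The crucial step is to verify $\mathfrak{S}>0$. For each prime $p$, the hypothesis $f(\Z^n)\not\subset p\Z$ exhibits some $\b{x}\in\Z^n$ with $p\nmid f(\b{x})$, which forces $N_p<p^n$; hence every Euler factor is strictly positive. The absolute convergence of $\mathfrak{S}$ is built into Theorem~\ref{thm:main} (as stressed in the discussion preceding its statement), and an absolutely convergent product of strictly positive factors is strictly positive. Consequently $\pi_f(P\c{B})\gg_{f,\c{B}} P^n/\log P\to\infty$ as $P\to\infty$.

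It remains to promote this count into infinitely many \emph{distinct} positive primes. I would argue by contradiction: if only finitely many primes $q_1,\ldots,q_K$ appeared as values $f(\b{x})$ with $\b{x}\in P\c{B}$, then by pigeonhole some equation $f(\b{x})=q_i$ would carry at least $\pi_f(P\c{B})/K$ solutions in $P\c{B}$. A routine slicing argument (pick an index $i$ with $\deg_{x_i}(f)\geq 1$, solve for $x_i$ in terms of the remaining variables, bound the degenerate locus where the leading coefficient in $x_i$ vanishes by induction on $n$) shows that such a level set contains only $O_{f,\c{B}}(P^{n-1})$ integer points in $P\c{B}$, contradicting our lower bound on $\pi_f(P\c{B})$ for $P$ large. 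The one non-formal ingredient beyond Theorem~\ref{thm:main} is this dimension-counting bound, which is standard; the main conceptual work is simply checking that both hypotheses are exactly what is needed to guarantee positivity of the singular series.
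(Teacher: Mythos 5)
Your proposal is correct and follows essentially the same route as the paper: a box $\c{B}$ with $f(\c{B})\subset(0,\infty)$ from the real condition, strict positivity of each Euler factor from the mod-$p$ condition combined with absolute convergence of the singular series (Lemma~\ref{lem:padicdens}), the asymptotic~\eqref{eq:collusion}, and the standard $O_{f,q,\c{B}}(P^{n-1})$ bound on the level sets $f(\b x)=q$ to rule out finitely many distinct prime values. The only difference is cosmetic: you sketch the slicing argument for the level-set bound, which the paper simply invokes as standard.
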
 \begin{proof}
We clearly need to focus only on the sufficiency.
If $f(\R^n)\nsubset (-\infty,0]$
holds, then we can obviously find a non-empty box $\c{B}\subset \R^n$
with $f(\c{B})\subset  (0,+\infty) $, so that 
$\mathrm{vol}(\c B)\neq 0$. 
If $f(\Z^n)\nsubset  p\Z$
holds, then the $p$-adic factor in~\eqref{eq:collusion}
is strictly positive and we shall see in Lemma~\ref{lem:padicdens}
that the product over $p$
is absolutely convergent. 
Hence by~\eqref{eq:collusion} we deduce that 
$\pi_f(P)\asymp_{f,\c B} P^n/\log P$. If $f(\b x)=q$ was soluble only for finitely many primes $q$,
say $q_1,\ldots,q_r$,
then the standard
estimate 
$\#\{\b x\in \Z^n\cap P\c B:f(\b x)=q\}\ll_{f,q,\c B} P^{n-1}$
would lead to 
\[\frac{P^n}{\log P}\asymp_{f, \c B}
\pi_f(B)=\sum_{i=1}^r
\#\{\b x\in \Z^n\cap P\c B:f(\b x)=q\}
\ll_{f,q_1,\ldots,q_r, \c B}
P^{n-1}
,\] which is a contradiction. \end{proof}

\subsection{Square-free integers represented by polynomials}
\label{s:noabc}
An integer $m$ is called square-free if for every prime $p$ we have 
$p^2\nmid m$. In particular, $0$ is not square-free and  $m$ is square-free if and only if $-m$ is.

Assume that we are given a polynomial
$f\in \Z[x_1,\ldots,x_n]$
 that is separable as an element of $\Q[x_1,\ldots,x_n]$
and let $f_0$ and $\sigma_f$ be as in~\S\ref{s:gensha}.
A similar approach to the one for Theorem~\ref{thm:main} allows us to study 
the set 
$
\b{S}_f:=\{\b{x}\in \Z^n: f(\b{x}) \text{ is square-free}\}.$
\begin{theorem}
\label{thm:second}
Assume that $f\in \Z[x_1,\ldots,x_n]$ 
is any integer 
polynomial
which is
separable as an element of $\Q[x_1,\ldots,x_n]$
and let 
$\c{B}\subset \R^n$ be any non-empty closed
box. If 
\begin{equation}
\label
{23assumption}
n-\sigma_f 
> \max\Big\{1,\frac{1}{3}(\deg(f)-1) 2^{\deg(f)}\Big\}
,\end{equation}
then there exists $\beta=\beta(f)>0$ such that for all $P\geq 2$ the equality 
\[
\frac{\#\{\b{S}_f\cap P\c{B}\}}{\#\{\Z^n\cap P\c{B}\}}
=
\prod_{p\ \text {prime }}
\Big(1-p^{-2n}\#\big\{\b{x}\in (\Z/p^2\Z)^n: f(\b{x})=0 
\big\}\Big)
+O_{f,\c{B}}(P^{-\beta})
\]
holds with an implied constant that depends at most on $f$ and $\c{B}$.
\end{theorem}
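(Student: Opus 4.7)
The plan is to combine a Möbius-based squarefree detector with a range-by-range analysis of the counting function
\[
N_f(d;P\c{B}):=\#\{\b{x}\in \Z^n\cap P\c{B} : d^2\mid f(\b{x})\}.
\]
Writing $\mathbf{1}[m\text{ squarefree}]=\sum_{d^2\mid m}\mu(d)$ and noting that $|f(\b{x})|\ll_{f,\c{B}}P^{\deg(f)}$ throughout $P\c{B}$, one obtains
\[
\#(\b{S}_f\cap P\c{B})=\sum_{d\leq D_{\max}}\mu(d)\,N_f(d;P\c{B}),\qquad D_{\max}:=C_{f,\c{B}}\,P^{\deg(f)/2},
\]
for a suitable constant $C_{f,\c{B}}$. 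I would split this sum at two auxiliary thresholds $1\leq D_1<D_2\leq D_{\max}$, to be optimised as small positive powers of $P$ at the end.

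For $d\leq D_1$, standard lattice counting combined with the Chinese remainder theorem yields
\[
N_f(d;P\c{B})=\rho_f(d^2)\,\frac{\vol(\c{B})\,P^n}{d^{2n}}+O\!\left(\frac{\rho_f(d^2)\,P^{n-1}}{d^{2(n-1)}}\right),
\]
where $\rho_f(m):=\#\{\b{x}\in(\Z/m\Z)^n:f(\b{x})=0\}$ is multiplicative. Separability of $f$ forces $\rho_f(p^2)=p^{2n-2}+O(p^{2n-3})$ at almost every prime, so the Euler product in the statement is absolutely convergent. Extending $\sum_{d\leq D_1}\mu(d)\rho_f(d^2)/d^{2n}$ to all $d\geq 1$ by multiplicativity then recovers this product up to a tail error $\ll P^n D_1^{-\eta}$ for some $\eta>0$.

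For the intermediate range $D_1<d\leq D_2$, I would rely on the pointwise bound $\rho_f(d^2)\ll d^{2(n-1)+\epsilon}$ (valid off a finite set of bad primes, with controlled degradation on them) together with the lattice error above; choosing $D_2$ as a sufficiently small positive power of $P$ keeps the resulting contribution below $P^{n-\beta'}$ for some $\beta'>0$.

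The main obstacle, and the step that dictates the hypothesis~(\ref{23assumption}), is the large range $D_2<d\leq D_{\max}$, where $d^2$ can be comparable to $P^{\deg(f)}$ and no elementary congruence treatment is available. Here I would apply the orthogonality relation
\[
\mathbf{1}[d^2\mid f(\b{x})]=\frac{1}{d^2}\sum_{a=1}^{d^2}e\!\left(\frac{a\,f(\b{x})}{d^2}\right)
\]
to reduce the estimation to complete exponential sums $\sum_{\b{x}\in\Z^n\cap P\c{B}}e(a f(\b{x})/d^2)$ with denominators as large as $P^{\deg(f)}$. Following the circle-method treatment used in the proof of Theorem~\ref{thm:main}, Weyl differencing on the top-degree part $f_0$ together with the Browning--Heath-Brown and Deligne exponential-sum bounds would yield estimates sufficiently strong that, after summation over $a$ modulo $d^2$ and over $d$ up to $P^{\deg(f)/2}$, the total contribution is $O(P^{n-\beta})$. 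The constant $\tfrac{1}{3}$ in~(\ref{23assumption})---as opposed to $\tfrac{1}{2}$ in Theorem~\ref{thm:main}---is calibrated precisely to handle the extremal regime $d\asymp P^{\deg(f)/2}$: one must save an additional factor of $d^2$ beyond the standard Birch trivial bound $P^n$ before summing, which forces the more stringent demand on the Weyl-type savings.
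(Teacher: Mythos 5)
Your overall strategy --- the M\"obius expansion $\mathbf{1}[m\text{ squarefree}]=\sum_{d^2\mid m}\mu(d)$ followed by a three-range analysis of $N_f(d;P\c{B})$ --- is the classical route to square-free values and is genuinely different from what the paper does. The paper never isolates the counts $N_f(d;P\c B)$ for individual $d$: it writes $\#\{\b S_f\cap P\c B\}=\int_0^1 S(\alpha)\overline{Q(\alpha)}\,\d\alpha$ with $Q$ an exponential sum supported on square-free integers, applies H\"older with exponents $(3,3/2)$, controls $\|Q\|_{L^{3/2}}\ll P^{\deg(f)/3+\epsilon}$ by Keil's moment estimate (Lemma~\ref{lem:eugke}), bounds $\|S\|_{L^3}$ on the minor arcs by a variant of Lemma~\ref{lem:4.4b}, and handles the major arcs with the Br\"udern--Granville--Perelli--Vaughan--Wooley asymptotic (Lemma~\ref{lem:MR1620828}). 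The constant $\tfrac13$ in~\eqref{23assumption} is the exponent $\deg(f)/3$ coming out of Keil's bound through H\"older; it is \emph{not}, as you assert, calibrated to an extremal regime $d\asymp P^{\deg(f)/2}$ in a divisor sum, because no such sum ever appears.

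The genuine gap is your large range $D_2<d\le D_{\max}$. This is the step you yourself flag as the main obstacle, and the resolution you propose does not go through under hypothesis~\eqref{23assumption}. Opening $\mathbf 1[d^2\mid f(\b x)]$ by orthogonality and reorganising by reduced denominators $q\mid d^2$, the best pointwise input available in this paper is the Birch--Weyl bound $|S(a/q)|\ll P^{n+\epsilon}\bigl(q^{-1}+qP^{-\deg f}+P^{-1}\bigr)^{K}$ with $K=\tfrac{n-\sigma_f}{2^{\deg f-1}(\deg f-1)}$, together with Proposition~\ref{deligne} on the major arcs. Running the computation (the weight attached to a reduced denominator $q$ is $\sum_{d:\,q\mid d^2}d^{-2}\asymp r(q)^{-2}$ with $r(q)\ge\sqrt q$, which is as large as $1/q$ when $q$ is a perfect square), the denominators $q=e^2$ with $P^{1/2}\le e\le P^{(\deg f-1)/2}$ contribute roughly $P^{\,n+(\deg f-1)/2-K+\epsilon}$, so one needs $K>\tfrac{\deg f-1}{2}$, i.e.\ $n-\sigma_f>(\deg f-1)^2\,2^{\deg f-2}$. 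For $\deg f\ge 3$ this is strictly stronger than~\eqref{23assumption} (already for $\deg f=3$ it demands $n-\sigma_f>8$ versus the theorem's $>16/3$), and the discrepancy grows linearly in $\deg f$. Equivalently: at the extreme $d\asymp P^{\deg(f)/2}$ your $N_f(d)$ reduces to counting solutions of $f(\b x)=kd^2$ for $O(1)$ values of $k$, an individual Birch-type count for which no bound of the correct order is known in the regime of~\eqref{23assumption}. So either you must import a mean-value or bilinear mechanism over $d$ that you have not described, or the threshold you can prove is substantially worse than the one claimed. As written, the proof does not establish the theorem.
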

The problem of square-free values of integer polynomials
has a very
long history, see~\cite{timsquarefree}
for a list of references.
Many cases are still open, for example, 
there is no irreducible 
quartic integer
polynomial in one variable for which we know that it takes infinitely many square-free 
values.
One of the most general results, 
conditional on the $abc$ conjecture,
is due to Poonen~\cite{MR1980998}
where arbitrary polynomials
are treated.
Our Theorem~\ref{thm:second} 
covers
unconditionally
arbitrary  polynomials of fixed degree and number of variables
with the proviso that the number of variables is suitably large compared to the degree.
Theorem~\ref{thm:second} features a saving of
two thirds of the 
variables
compared to the Birch setting~\cite{birch}.
This saving comes from the fact that 
exponential sums whose terms 
are
restricted to square-free integers 
can be bounded in a satisfactory manner, this was done in the work of 
Br\"udern, Granville, Perelli, Vaughan and Wooley~\cite{MR1620828}
and 
Keil~\cite{MR3043604}.

\begin{notation}
We shall
use the
notation $\mathbf{x}$ to refer
to $n$-tuples $\b x = (x_1,...,x_n)$. 
We will also make use of the classical von Mangoldt function denoted $\Lambda$ and of the classical M\"{o}bius function denoted $\mu$. 
The letter $d$ will refer exclusively to 
the degree of the polynomial~$f$ in Theorem~\ref{thm:main}.
Finally,
throughout
the paper, we shall 
make
use of
the notation 
\begin{equation}
\label{exp}
\e(z):=\exp(2\pi i z),
z\in \mathbb{C}.
\end{equation}
The polynomial $f$ and the box $\c B$ 
will be considered 
fixed 
throughout. This is taken to mean that, although 
each implied constant in the big $O$ notation will depend on several quantities related to $f$,
we shall avoid recording these dependencies. The list of the said quantities consists of
\[f_0,n,d,\sigma_f, \theta_0, \delta, \eta, \lambda_1,A,\lambda,
\]
whose meaning will become evident in due course.
The symbol $\epsilon$ will be used for a small positive parameter whose value may vary, allowing, for example, inequalities of the form 
$x^\epsilon\ll x^{\epsilon/4}$. 
Further dependency of the implied constants on other quantities 
will be recorded explicitly via an appropriate use of subscript.

\end{notation}
\begin{acknowledgements}
We are grateful to Jean-Louis
Colliot-Thélène
and Yang Cao for helpful 
conversations regarding the applications of Theorem~\ref{thm:main}.
We would also like to thank Tim Browning for 
suggesting the proof of  
Proposition~\ref{deligne}.
We wish to acknowledge the comments of the anonymous referee 
that helped improve this work considerably.
\end{acknowledgements}

\section{The proof of Theorem~ \ref{hpp}}
\label{s:prfinal}
Denote by $\mathbb{Q}_v$ the completion of $\mathbb{Q}$ with respect to the place $v$,
let $|\cdot|_p$ be the $p$-adic norm defined by $|x|_p=p^{-\nu_p(x)}$ for $x \in \mathbb{Q}_p$ if $v=p$ is finite and define
$|\cdot|_{\infty}$ as the 
classical absolute value for the real place. 
We will use
the notation
$\mathbb{Z}_S$ for the ring of $S$-integers
for any finite set of finite places $S$ and we will say that a prime $p$ is a fixed prime divisor of a polynomial $f \in \mathbb{Z}[x_1,\dots,x_n]$ if, for all $(x_1,\dots,x_n) \in \mathbb{Z}^n$, we have $p \mid f(x_1,\dots,x_n)$.

\subsection{Preliminary lemmas}
We begin by establishing the following analogue of \cite[Lem. 2]{Colliot}.
\begin{lemma}
Let $f \in \mathbb{Z}[x_1,\dots,x_n]$ be a non-zero polynomial  
with content equal to $1$. If $p$ is such that $f\left(\mathbb{Z}^n\right) \subseteq p\mathbb{Z}$, then $p \leqslant \deg(f)$.
	\label{fpf}
\end{lemma}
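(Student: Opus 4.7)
The plan is to reduce modulo $p$ and exploit the fact that a nonzero polynomial in $\mathbb{F}_p[x_1,\ldots,x_n]$ of degree $<p$ cannot vanish on all of $\mathbb{F}_p^n$. Let me describe this carefully.

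First I would let $\bar{f}\in \mathbb{F}_p[x_1,\ldots,x_n]$ denote the reduction of $f$ modulo $p$. The hypothesis that the content of $f$ equals $1$ means that the gcd of the coefficients of $f$ is $1$, which implies in particular that at least one coefficient of $f$ is not divisible by $p$. Hence $\bar{f}\neq 0$ in $\mathbb{F}_p[x_1,\ldots,x_n]$, and clearly $\deg(\bar{f})\leq \deg(f)$. On the other hand, the hypothesis $f(\mathbb{Z}^n)\subseteq p\mathbb{Z}$ translates, after reduction modulo $p$, to $\bar{f}(\b{a})=0$ for every $\b{a}\in \mathbb{F}_p^n$.

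The key step is then to show that any polynomial $g\in \mathbb{F}_p[x_1,\ldots,x_n]$ with $\deg(g)<p$ that vanishes on all of $\mathbb{F}_p^n$ must be identically zero. I would prove this by induction on the number of variables $n$. The base case $n=1$ is immediate: a nonzero univariate polynomial over $\mathbb{F}_p$ of degree less than $p$ has strictly fewer than $p$ roots, whereas $g$ vanishes at all $p$ elements of $\mathbb{F}_p$. For the induction step, I would write
\[
g(x_1,\ldots,x_n)=\sum_{j=0}^{D} g_j(x_1,\ldots,x_{n-1})\, x_n^{j},
\]
where $D=\deg_{x_n}(g)\leq \deg(g)<p$. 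Fixing any $(a_1,\ldots,a_{n-1})\in \mathbb{F}_p^{n-1}$, the univariate polynomial $\sum_j g_j(a_1,\ldots,a_{n-1})\,x_n^{j}$ has degree less than $p$ and vanishes on $\mathbb{F}_p$, so by the base case it is identically zero, meaning $g_j(a_1,\ldots,a_{n-1})=0$ for every $j$. Since this holds for every choice of $(a_1,\ldots,a_{n-1})\in \mathbb{F}_p^{n-1}$, each $g_j$ is a polynomial in $n-1$ variables of degree less than $p$ that vanishes identically on $\mathbb{F}_p^{n-1}$, hence by the inductive hypothesis $g_j=0$, and therefore $g=0$.

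To conclude, I would apply the contrapositive of this statement to $\bar{f}$: since $\bar{f}\neq 0$ yet vanishes on all of $\mathbb{F}_p^n$, we must have $\deg(\bar{f})\geq p$, and so $\deg(f)\geq \deg(\bar{f})\geq p$, which is the desired conclusion. The main (and only nontrivial) obstacle here is the combinatorial lemma about polynomials vanishing on $\mathbb{F}_p^n$, which is a well-known variant of the Chevalley--Warning circle of ideas; the rest is a direct translation of the hypotheses through reduction modulo $p$.
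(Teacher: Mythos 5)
Your proof is correct. It rests on the same core idea as the paper's: reduce $f$ modulo $p$ (content $1$ guarantees the reduction $\bar f$ is nonzero) and observe that a nonzero polynomial over $\mathbb{F}_p$ of degree $d<p$ cannot vanish on all of $\mathbb{F}_p^n$. The only difference is how that key fact is justified. The paper cites the quantitative zero-counting bound $\#\{\mathbf{x}\in(\mathbb{Z}/p\mathbb{Z})^n: f(\mathbf{x})=0\}\leqslant d\,p^{n-1}$ (a Schwartz--Zippel/Ore type estimate, quoted from another work) and compares it with $p^n$, whereas you prove the qualitative vanishing statement directly by induction on the number of variables, peeling off one variable at a time and invoking the univariate ``degree bounds the number of roots'' fact. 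Your route is self-contained and more elementary; the paper's is shorter but leans on an external quantitative input that is strictly stronger than what the lemma needs. Both yield exactly the conclusion $p\leqslant\deg(f)$, and your induction step is carried out correctly (each coefficient polynomial $g_j$ has degree $<p$ and vanishes on $\mathbb{F}_p^{n-1}$, so the inductive hypothesis applies).
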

\begin{proof}
Define $d:=\deg(f)$ and 
let $p$ be a prime such that $f\left(\mathbb{Z}^n\right) \subseteq p\mathbb{Z}$. On one hand, we have by assumption that
$$
\#\left\{ \mathbf{x} \in \left(\mathbb{Z}/p\mathbb{Z}\right)^n \hspace{1mm} : \hspace{1mm} f(\mathbf{x})= 0 
 \right\}=
p^n.
$$
On the other hand, since $f$  has content one, \cite[Eq.(2.7)]{SW} implies that
$$
\#\left\{ \mathbf{x} \in \left(\mathbb{Z}/p\mathbb{Z}\right)^n \hspace{1mm} : \hspace{1mm} f(\mathbf{x})=0
  \right\}\leqslant d p^{n-1}
$$
and hence $p\leqslant d$, thus concluding the proof of the lemma.
\end{proof}

We now
use
Lemma~\ref{fpf} to verify
the following analogue of~\cite[Prop. 1.2]{WSko} and of the hypothesis ($H_1$) of \cite{CS94} over $\mathbb{Q}$.

\begin{proposition}
 	Let $f \in \mathbb{Q}[x_1,\dots,x_n]$ be an irreducible homogeneous polynomial satisfying the assumptions (\ref{halfassumption}) and $f(1,0,\dots,0)>0$. Let $C$ be a positive real constant and $\varepsilon>0$. Suppose we are given $(\lambda_{1,p},\dots,\lambda_{n,p}) \in \mathbb{Q}_p^n$ for $p$ in a finite set of finite places $S$ containing all primes $p \leqslant \deg(f)$  
	and all primes $p$ such that $f$ does
	not have $p$-integral coefficients as well as all primes $p$ such that $\nu_p(f(1,0,\dots,0))>0$. Then there exists infinitely many $(\lambda_1,\dots,\lambda_n) \in \mathbb{Z}_S^n$ such that $\lambda_1>C\lambda_i>0$ for all $i \in \{2,\dots,n\}$,
	$
	|\lambda_i-\lambda_{i,p}|_p<\varepsilon
	$
	for all $i \in\{1,\dots,n\}$ and $p \in S$ and $f(\lambda_1,\dots,\lambda_n)=\ell u$ for a prime $\ell \notin S$ and $u\in \mathbb{Z}_S^{\times}$, $u>0$.
	\label{analogue}
\end{proposition}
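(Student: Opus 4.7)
The strategy is to apply Theorem~\ref{thm:main} to a polynomial $g$ built from $f$ by an affine substitution that encodes all the $p$-adic approximation conditions and strips off the $S$-part of $f$, thereby turning the problem into a prime-representation problem for $g$.

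First I would perform a few routine reductions. Because the content of $f$ divides $f(1,0,\dots,0)$, the assumption on $S$ forces $\mathrm{content}(f)$ to be a unit in $\mathbb{Z}_S$, so after rescaling by an $S$-unit one may assume $f\in\mathbb{Z}[\mathbf{x}]$ has content one (the extra factor is absorbed into $u$). A uniform rescaling of all coordinates by a positive element of $\mathbb{Z}_S^{\times}$ then reduces to the case $\lambda_{i,p}\in\mathbb{Z}_p$ for all $i$ and $p\in S$, and (taking $\varepsilon\leq 1$) the proximity automatically forces $\boldsymbol\lambda\in\mathbb{Z}^n$. Invoking the Chinese remainder theorem, I choose $\mathbf{a}\in\mathbb{Z}^n$ with $a_i\equiv\lambda_{i,p}\pmod{p^N}$ for all $i$, $p\in S$, where $N$ is large enough that $p^{-N}<\varepsilon$ and $N>\nu_p(f(\mathbf{a}))$ for every $p\in S$. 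Setting $q:=\prod_{p\in S}p^N$ and $Q:=\prod_{p\in S}p^{\nu_p(f(\mathbf{a}))}$, the parametrization $\boldsymbol\lambda=\mathbf{a}+q\mathbf{y}$ with $\mathbf{y}\in\mathbb{Z}^n$ automatically enforces the $p$-adic closeness, and
\[
g(\mathbf{y}):=\frac{f(\mathbf{a}+q\mathbf{y})}{Q}\in\mathbb{Z}[\mathbf{y}]
\]
has constant term $f(\mathbf{a})/Q$ coprime to every prime of $S$; moreover $Q$ is a positive element of $\mathbb{Z}_S^{\times}$.

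Next I would check that $g$ inherits the hypotheses of Theorem~\ref{thm:main}. The affine substitution $\mathbf{x}\mapsto\mathbf{a}+q\mathbf{y}$ preserves irreducibility and degree, while the top-degree form $g_0=(q^d/Q)\,f$ has the same singular locus as $f$, so $\sigma_g=\sigma_f$ and assumption~\eqref{halfassumption} holds for $g$. Using $f(1,0,\dots,0)>0$, homogeneity and continuity, I fix a closed box $\mathcal{B}_0\subset\mathbb{R}^n$ contained in the cone $\{\lambda_1>C\lambda_i>0,\;i\geq 2\}$ on which $f$ is strictly positive, and apply Theorem~\ref{thm:main} to $g$ on $\mathcal{B}_0$ with scaling parameter $P/q\to\infty$. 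Every integer $\mathbf{y}\in (P/q)\mathcal{B}_0$ with $g(\mathbf{y})$ equal to a positive prime $\ell$ produces $\boldsymbol\lambda=\mathbf{a}+q\mathbf{y}$ satisfying $f(\boldsymbol\lambda)=Q\ell$; since $\mathbf{y}$ lies in the cone, the inequalities $\lambda_1>C\lambda_i>0$ hold for $P$ large, and setting $u:=Q\in\mathbb{Z}_S^{\times}$ gives the decomposition $f(\boldsymbol\lambda)=\ell u$ with $u>0$. Such an $\ell$ satisfies $\ell\equiv f(\mathbf{a})/Q\not\equiv 0\pmod p$ for every $p\in S$, so $\ell\neq p$ and hence $\ell\notin S$.

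The main obstacle is verifying that the singular series attached to $g$ is strictly positive, which is precisely where the containment of all primes $\leq\deg(f)$ in $S$, together with Lemma~\ref{fpf}, is used in an essential way. For $p\in S$, the reduction $g\bmod p$ is the nonzero constant $f(\mathbf{a})/Q$, so the local factor is positive. For $p\notin S$ one has $p>\deg(f)$, and any identical vanishing of $g$ on $\mathbb{F}_p^n$ would (since $p$ exceeds the degree) imply $g\equiv 0$ as a polynomial mod $p$; consequently $f$ would vanish mod $p$ on the coset $\mathbf{a}+q\mathbb{Z}^n$, which reduces to all of $\mathbb{F}_p^n$ because $q$ is a unit mod $p$. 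Hence $f(\mathbb{Z}^n)\subseteq p\mathbb{Z}$, and Lemma~\ref{fpf} yields $p\leq\deg(f)$, a contradiction. The singular series is therefore strictly positive, and the main term in Theorem~\ref{thm:main} supplies the infinitude of $\boldsymbol\lambda$ required by the proposition.
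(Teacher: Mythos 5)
Your proposal is correct and follows essentially the same route as the paper: reduce to $\lambda_{i,p}\in\mathbb{Z}_p$, pick an integer point $\mathbf{a}$ congruent to the targets modulo a high power of the primes in $S$, pass to the substituted polynomial $g(\mathbf{y})=f(\mathbf{a}+q\mathbf{y})/Q$, rule out fixed prime divisors outside $S$ via Lemma~\ref{fpf}, and apply Theorem~\ref{thm:main} on a box inside the cone where $f>0$. The only (welcome) difference is bookkeeping: you divide by the exact $S$-part $Q$ of $f(\mathbf{a})$, which makes the positivity of every local factor of the singular series immediate, whereas the paper divides out the product of fixed prime factors of the substituted polynomial.
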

\begin{proof}
	Up to multiplication of $(\lambda_1,\dots,\lambda_n)$ and $(\lambda_{1,p},\dots,\lambda_{n,p})$ by a product of powers of primes in $S$, we can assume without loss of generality that $(\lambda_{1,p},\dots,\lambda_{n,p}) \in \mathbb{Z}_p^n$ for $p \in S$. The assumption that $f(1,0,\dots,0)>0$ provides
	with $a_{\mathbf{i}} \in \mathbb{Q}$ and $a>0$ such that 
	\begin{equation}
	\label{a}
	f(x_1,\dots,x_n)=ax_1^d+\sum_{\substack{\mathbf{i}=(i_1,\dots,i_n) \in \mathbb{N}^n\\i_1+\cdots+i_n=d \\ 0 \leqslant i_1,\dots,i_n \leqslant d \\ i_1 \neq d}} a_{\mathbf{i}}x_1^{i_1}\cdots x_n^{i_n}.
	\end{equation}
	 Let  $N_{<0}$
	  be the number of indices $\b{i}$ with  $a_{\mathbf{i}} <0$.
	We can  assume that 	$C>\frac{N_{>0} |a_{\mathbf{i}}|  }{a}$
and
	$C>1$ 
 	 whenever $a_{\mathbf{i}}<0$. 
	As in the proof of \cite[Prop. 1.2]{WSko}, 
	we can now
	find
	$(\lambda_{0,1},\dots,\lambda_{0,n}) \in \mathbb{Z}^n$ such that $|\lambda_{0,i}-\lambda_{i,p}|_p<\varepsilon/2$ for all $p \in S$. We can choose them such that $\lambda_{0,1}>C\lambda_{0,i}>0$ for all $i \in \{2,\dots,n\}$. 
	We can now see that   $f(\lambda_{0,1},\dots,\lambda_{0,n})>0$
	by alluding to 
	$$
	f(\lambda_{0,1},\dots,\lambda_{0,n})\geqslant a\lambda_{0,1}^d-\sum_{a_{\mathbf{i}}<0}|a_{\mathbf{i}}|\lambda_{0,1}^{i_1}\cdots \lambda_{0,n}^{i_n},
	$$
	and the inequalities
	$$
	a\lambda_{0,1}^d=\frac{a}{N_{>0}}\sum_{a_{\mathbf{i}}<0}\lambda_{0,1}^d=\frac{a}{N_{>0}}\sum_{a_{\mathbf{i}}<0}\lambda_{0,1}^{i_1}\cdots\lambda_{0,1}^{i_n}>\sum_{a_{\mathbf{i}}<0}\frac{a}{N_{>0}}C^{d-i_1}\lambda_{0,1}^{i_1}\cdots\lambda_{0,n}^{i_n}>\sum_{a_{\mathbf{i}}<0}|a_{\mathbf{i}}|\lambda_{0,1}^{i_1}\cdots\lambda_{0,n}^{i_n}
	.$$
	Let $A=\prod_{p\in S}p$
	and a fixed integer $N$ big enough so that $|A^N|_p <\varepsilon/2$ for all $p \in S$.
	Now consider the polynomial $g \in \mathbb{Q}[x_1,\dots,x_n]$ given by 
	$$
	g(x_1,\dots,x_n)=f(\lambda_{0,1}+x_1A^N,\dots,\lambda_{0,n}+x_nA^N).
	$$
	The polynomial $g$ can be expressed as $g=t\tilde{g}$ for $t \in \mathbb{Z}_S^{\times}$ and $\tilde{g}$ a polynomial with integer coefficients which is irreducible over $\mathbb{Q}$. Let us denote by $c$ the product of all fixed prime factors of $\tilde{g}$. We will now establish that if $p$ is a prime factor of $c$ then $p \in S$. Let $p \mid c$. Either $p$ divides the content of $\tilde{g}$ and in particular, with the notation (\ref{a}), $\nu_p(aA)\neq 0$ which immediately implies that $p \in S$ or, denoting by $\tilde{c}$ the content of $\tilde{g}$, $p$ is a fixed prime factor of the polynomial $\tilde{g}/\tilde{c}$ which has integral coefficients and content equal to one.
By Lemma \ref{fpf} this implies that $p \leqslant \deg(f)$ and hence that $p \in S$. Moreover, with the notation of \S 1.2, $\tilde{g}_0=A^{dN}f$
	and the conditions $x_1>Cx_i>0$ define an open cone $\mathcal{C}$ in $\mathbb{R}^n$. In addition, 
	when $f$ is evaluated at $(\lambda_{0,1},\dots,\lambda_{0,n})\in \c{C}$ it produces a strictly
	positive value,
	therefore 
	we can find a box $\mathcal{B} \subseteq \mathcal{C}$ such that $f(\c{B})\subset (0,\infty)$.
	Since for all $P$ we have $P\mathcal{B} \subset \mathcal{C}$, we obtain from 
	Theorem~\ref{thm:main} that
	there exist infinitely many $\mathbf{x}\in \Z^n \cap \mathcal{C}$ 
	such that $\tilde{g}(\mathbf{x})/c$ is prime.	 Introducing $
	\lambda_i=\lambda_{0,i}+x_iA^N$ for any $i \in \{1,\dots,n\}$ and for any such $\mathbf{x} \in \Z^n \cap \mathcal{C}$, we get that
	$$
	f(\lambda_1,\dots,\lambda_n)/(ct)=g(x_1,\dots,x_n)/(ct)
	$$
	is prime. This yields the result because $\lambda_{0,1}>C\lambda_{0,i}>0$ and $x_{1}>C x_{i}>0$, which implies 
	$\lambda_1=(\lambda_{0,1}+x_1A^N)>C\lambda_i=C(\lambda_{0,i}+x_iA^N)$. Moreover, $|\lambda_i-\lambda_{i,0}|_p\leqslant |A^N|_p<\varepsilon/2$ and hence $|\lambda_i-\lambda_{i,p}|_p<\varepsilon$ for all $p \in S$ and all $i \in \{1,\dots,n\}$.
	\qedhere
\end{proof}
\subsection{Conclusion of the proof of 
Theorem~\ref{hpp}} 
\begin{proof}
We proceed by adapting the proof of \cite[Th. 1.3]{WSko}. We are given
$1>\varepsilon>0$,
a finite set of places $S$  
and 
 a point $(\mathbf{t}_v,\mathbf{x}_{1,v},\dots,\mathbf{x}_{r,v}) \in X(\mathbb{Q}_v)$ for every place $v$ of $\mathbb{Q}$ and we want to find $(\mathbf{t},\mathbf{x}_{1},\dots,\mathbf{x}_{r}) \in X(\mathbb{Q})$ such that for all $v \in S$, 
 $$
 \left\{\begin{aligned}
 &\left|t_i-t_{i,p}\right|_v<\varepsilon \qquad (i \in \{1,\dots,n\}),\\
 &\left|x_{j_i,i}-x_{j_i,i,v}\right|_v<\varepsilon \qquad (i \in \{1,\dots, r\}, \hspace{0.5cm} j_i \in \{1,\dots,d_i\}).
 \end{aligned}
 \right.
 $$
 \subsubsection{First step}
By density and continuity, we can assume that $\mathbf{t}_{\infty} \in \mathbb{Q}^n$ and by a linear change of variables, we can assume that $\mathbf{t}_{\infty}=(1,0,\dots,0)$. Note that the solubility over~$\mathbb{R}$ implies
that $f(1,0,\dots,0)>0$ in the case where there is a totally imaginary $K_i$.
In addition, it implies that 
$f(1,0,\dots,0)$ can be
strictly positive or strictly negative
when all $K_i$ are totally real. 
We denote by $s\in \{-1,+1\}$ the sign of $f(1,0,\dots,0)$. 
We can enlarge $S$ so that it contains the real place, the field $K_i$ is unramified outside $S$ for all $i \in \{1,\dots,r\}$, $S$ contains all primes $p \leqslant \deg(f)$, all primes $p$ such that $f$ does not have $p$-integral coefficients as well as primes $p$ such that $\nu_p(f(1,0,\dots,0))>0$. 
 \subsubsection{Second step}
Let $L=[d_1,\dots,d_r]$ denote the least common multiple of the degrees $d_1,\dots,d_r$
  and $M=\max_{v\in S}\max_{1\leqslant i \leqslant r \atop 1 \leqslant j_i \leqslant d_i}|x_{j_i,i,v}|_v$. By \cite[Prop. 6.1]{FV}, we know that the image of the map $N_{K_{i,p}/\mathbb{Q}_p}:K_{i,p}^{\times} \rightarrow \mathbb{Q}_p^{\times}$ is 
	open and that a polynomial function is continuous for the $p$-adic topology. Hence there exists $\varepsilon'>0$ such that for every $(\lambda_1,\dots,\lambda_n) \in \mathbb{Q}^n$ satisfying for every $i \in \{1,\dots,n\}$, the inequality $
	|\lambda_i-t_{i,p}|_p<\varepsilon'
	$, we have that $f(\lambda_1,\dots,\lambda_n)$ is a
 	local norm for $K_i/\mathbb{Q}$ for the place $p$ and there exists $\varepsilon''>0$ such that for every $(\lambda_1,\dots,\lambda_n) \in \mathbb{Q}^n$ satisfying for every $i \in \{1,\dots,n\}$, the inequality $
	|\lambda_i-t_{i,p}|_p<\varepsilon''
	$, we have that
	\begin{equation}
	|f(\lambda_1,\dots,\lambda_n)-f(t_{1,p},\dots,t_{n,p})|_p<\frac{\varepsilon}{2M}|f(t_{1,p},\dots,t_{n,p})|_p|L|_p, \qquad (p \in S).
	\label{auxxx}
	\end{equation}
 Then
applying
Proposition~\ref{analogue} yields $(\lambda_1,\dots,\lambda_n) \in \mathbb{Z}_S^n$ such that $\lambda_1>C\lambda_i>0$ for all $i \in \{2,\dots,n\}$,
	$
	|\lambda_i-t_{i,p}|_p<\min\{\varepsilon/2,\varepsilon',\varepsilon''/2\}
	$
	for all $i \in\{1,\dots,n\}$ and $p \in S$ and $f(\lambda_1,\dots,\lambda_n)=s\ell u$ for a prime $\ell \notin S$ and $u\in \mathbb{Z}_S^{\times}$
	with $u>0$. We thus obtain that $f(\lambda_1,\dots,\lambda_n)$ is a
	local norm for $K_i/\mathbb{Q}$ for all places of $S$. This is also the case for the real place because $f(\lambda_1,\dots,\lambda_n)>0$ in the case that
	there is a 
	totally imaginary
	$K_i$. 
	\subsubsection{Third step}
	Now, $f(\lambda_1,\dots,\lambda_n)=\ell su$ is a unit in $\mathbb{Z}_p$ for every $\mathbb{Q}_p$ and $p \notin S \cup \{\ell\}$ and we know by 
	\cite[Prop.
	V.3.11]{Janusz} that this implies that $f(\lambda_1,\dots,\lambda_n)$ is a local norm for $K_i/\mathbb{Q}$ for all $p \notin S \cup \{\ell\}$. By the global reciprocity law and the fact that $K_i/\mathbb{Q}$ is unramified outside $S$ we see
	that $f(\lambda_1,\dots,\lambda_n)$ is also a local norm for $K_i/\mathbb{Q}$ at the place $\ell$ (see \cite[Prop. V.12.9]{Janusz}). 
	The conclusion
	is that 
	$f(\lambda_1,\dots,\lambda_n)$ is a local norm for $K_i/\mathbb{Q}$ at every place of $\mathbb{Q}$ and then by the Hasse norm 
	principle 
	\cite[Th. V.4.5]{Janusz}, 
	one gets that there exists $(\mathbf{x}_1,\dots,\mathbf{x}_r)\in \mathbb{Q}^{d_1}\times \cdots \times \mathbb{Q}^{d_r}$ such that
	$
	0 \neq f(\lambda_1,\dots,\lambda_n)=\mathbf{N}_{K_1/\mathbb{Q}}(\mathbf{x}_1)=\cdots=\mathbf{N}_{K_r/\mathbb{Q}}(\mathbf{x}_r).
	$
	\subsubsection{Fourth step}
	By continuity, there exists $\varepsilon_1>0$ such that for all $q_1\in \Q^{\times}$ such that $|q_1-\lambda_1|_{\infty}<\varepsilon_1$, then $\left|\frac{1}{q_1}-\frac{1}{\lambda_1} \right|_{\infty}<\frac{\varepsilon}{2\max_{1\leqslant i \leqslant n} |\lambda_i|_{\infty}}$. Writing $m=[L,\deg(f)]$, by weak approximation in~$\mathbb{Q}$, since $\lambda_1>0$ one can find $\rho \in \mathbb{Q}$ such that $|\rho-1|_p<\min\left\{\frac{\varepsilon}{2},\frac{\varepsilon''}{2}\right\}\min\{1,|\lambda_i|_p\}$ for all $p \in S$ and $|\rho^{m/\deg(f)}-\lambda_1|_{\infty}<\min\left\{\frac{\varepsilon}{2}\min\left\{1,\lambda_1-\frac{\varepsilon}{2}\right\},\varepsilon_1\right\}$ where we can assume that $\varepsilon<2\lambda_1$. In particular, this implies that $|\rho|_p=1$ for all $p \in S
	$. We now
	make the following change of variables,
$$
\lambda_i=\rho^{m/\deg(f)} \lambda'_i, \quad i \in \{1,\dots,n\}, \quad \mathbf{x}_i=\rho^{m/d_i}\mathbf{x}'_i, \quad i \in \{1,\dots,r\}
,$$
so that for all finite place $p \in S$
we have 
$$
|\lambda_i'-\lambda_i|_p=|\lambda'_i|_p|\rho^{m/\deg(f)}-1|_p\leqslant |\lambda_i|_p|\rho-1|_p<\frac{\varepsilon}{2}
$$
and therefore $|\lambda'_i-t_{i,p}|_p<\varepsilon$ for all $i \in \{1,\dots,n\}$. Moreover, we have
\begin{equation}
0\neq f(\lambda'_1,\dots,\lambda'_n)
=
\mathbf{N}_{K_1/\mathbb{Q}}(\mathbf{x}'_1)=\cdots=\mathbf{N}_{K_r/\mathbb{Q}}(\mathbf{x}'_r).
\label{a2}
\end{equation}
As for the real place, we have $|\lambda'_1-1|_{\infty}<\varepsilon$ and 
$| \lambda'_i-\lambda_i/\lambda_1|_{\infty}<\varepsilon/2$. 
The treatment of the archimedian place is now
concluded similarly as in~\cite{WSko},
by alluding to $0<\lambda_i/\lambda_1<C^{-1}$ and by 
taking $C$ big enough, namely $C>\frac{2}{\varepsilon}$.
  \subsubsection{Fifth step} To conclude the proof, it remains to find $(\mathbf{x}''_1,\dots,\mathbf{x}''_r)\in \mathbb{Q}^{d_1}\times \cdots \times \mathbb{Q}^{d_r}$ $v$-adically close to $(\mathbf{x}_{1,v},\dots,\mathbf{x}_{r,v})$ for all $v \in S$ and such that $\mathbf{N}_{K_i/\mathbb{Q}}(\mathbf{x}''_i)=\mathbf{N}_{K_i/\mathbb{Q}}(\mathbf{x}'_i)$ for all $i \in \{1,\dots,r\}$. By (\ref{auxxx}) and the choice of 
 $\boldsymbol{\lambda}'=(\lambda'_1,\dots,\lambda'_n)$ above, one can write $f(\lambda'_1,\dots,\lambda'_n)=f(t_{1,p},\dots,t_{n,p})\beta_p$ with $\beta_p \in \Q_p$ satisfying $|\beta_p-1|_p<\frac{\varepsilon}{2M}|L|_p$ for all finite place $p \in S$. In particular, $|\beta_p|_p=1$ and $\beta_p \in \Z_p^{\times}$ for all finite place $p \in S$. Now, Hensel's lemma implies that there exists $\alpha_p \in \Z_p$ such that $\beta_p=\alpha_p^L$ and 
$$
|\alpha_p-1|_p=\left|\frac{\beta_p-1}{L}\right|_p<\frac{\varepsilon}{2M}.
$$
Of course, there exists $\alpha_{\infty}$ such that $\beta_{\infty}=\alpha_{\infty}^L$ and $|\alpha_{\infty}-1|_{\infty}<\varepsilon/(2M)$ since one can always ensure that $f(1,0,\dots,0)$ and $f(\lambda'_1,\dots,\lambda'_n)$ have the same sign.
Now alluding to the facts that $(\mathbf{t}_v,\mathbf{x}_{1,v},\dots,\mathbf{x}_{r,v}) \in X(\mathbb{Q}_v)$ and to (\ref{a2}), we obtain that for all $v \in S$
$$
0\neq f(\lambda'_1,\dots,\lambda'_n)
=
\mathbf{N}_{K_1/\mathbb{Q}}(\alpha_v^{L/d_1}\mathbf{x}_{1,v})=\cdots=\mathbf{N}_{K_r/\mathbb{Q}}(\alpha_v^{L/d_r}\mathbf{x}_{r,v}).
$$
In other words, for every $i \in \{1,\dots,r\}$, we have $\mathbf{N}_{K_i/\mathbb{Q}}\left(\alpha_v^{L/d_i}\mathbf{x}_{i,v}\right)=\mathbf{N}_{K_i/\mathbb{Q}}\left(\mathbf{x}'_i\right)$ for all $v \in S$. Thanks to the fact that weak approximation holds for the norm tori $N_{K_i/\mathbb{Q}}(\mathbf{z})=1$, one gets the existence of $\mathbf{x}''_i \in \Q^{d_i}$ such that $|x''_{j,i}-\alpha_v^{L/d_i}x_{j,i,v}|_v<\varepsilon/2$ for all $v \in S$ and $j \in \{1,\dots,d_i\}$ and $\mathbf{N}_{K_i/\mathbb{Q}}(\mathbf{x}''_{i})=\mathbf{N}_{K_i/\mathbb{Q}}(\mathbf{x}'_{i})$. Therefore, we have
$$
0\neq f(\lambda'_1,\dots,\lambda'_n)
=
\mathbf{N}_{K_1/\mathbb{Q}}(\mathbf{x}''_{1})=\cdots=\mathbf{N}_{K_r/\mathbb{Q}}(\mathbf{x}''_{r})
$$
along with
$$
|x''_{j,i}-x_{j,i,v}|_v\leqslant |x''_{j,i}-\alpha_v^{L/d_i}x_{j,i,v}|_v+|x_{j,i,v}|_v|\alpha_v-1|_v<\varepsilon,
$$
thus concluding the proof of Theorem~\ref{hpp}.
\end{proof}

\section{The proof of Theorem~\ref{thm:main}} \label{s:conclus}

\subsection{First steps and auxiliary estimates}   \label{s:utilising}
The
proof of 
Theorem~\ref{thm:main}
is initiated by 
using the following exponential sums for
real $\alpha$,
\begin{equation}
\label{s}
{S(\alpha):=
\sum_{\mathbf{x} \in \mathbb{Z}^n \cap  P\mathcal{B} }
\e\left( \alpha f(\mathbf{x})\right)}
\
\text{ and }
\
{W(\alpha):=
\sum_{\frac{1}{2}
\min\{f_0(\c{B})\} P^d \leq
p \leq 
2\max\{f_0(\c{B})\} P^d}
\e(\alpha p)
,}
\end{equation}
where 
we used that, 
in the setting of 
Theorem~\ref{thm:main},
the 
following
quantities are positive
\[
\min\{f_0(\c{B})\}
=
\min\{f_0(\b{x}):\b{x}\in \c{B}\}
\ \text{ and }  \
\max\{f_0(\c{B})\}
:=\max\{f_0(\b{x}):\b{x}\in \c{B}\}
.\]
The fact that
$\int_0^1 \e(\alpha \{f(\b{x})-p\})\mathrm{d}\alpha$
is $1$ when $f(\b{x})=p$ and is otherwise 
$0$, shows that for all $P\gg_{f,\mathcal{B}} 1$, we have the equality
\begin{equation}
\label{eq:clear}
\pi_f(P\c{B})
=
\int_0^1
S(\alpha)
\overline{W(\alpha)}
\mathrm{d}\alpha
.\end{equation}
This identity has the useful feature that it completely
separates the problem of evaluating 
$\pi_f$ into two problems, one regarding the evaluation of
the sum $S$ 
(that is only related to the values of the polynomial $f$)
and one regarding the evaluation of the sum $W$ 
(that is only related to the distribution of primes).
Birch~\cite{birch} has a similar identity, save for the factor $\overline{W(\alpha)}$.
The main idea is that the presence of this extra factor 
can be turned to our advantage, 
as it attains small
values for certain $\alpha$ for which $|S(\alpha)|$
is large. 
Let us comment that we could have defined $W$
in an alternative way by replacing the range for the primes $p$
by the condition 
$\min\{f_0(\c{B})\} P^d \leq p \leq \max\{f_0(\c{B})\} P^d$, however, our choice will 
make 
more transparent
the proof of Lemma~\ref{lem:prwinoskafes}. 

Before proceeding let us recall here the estimates from the work of Birch~\cite{birch}
that we shall need later.
First, 
following~\cite[pg. 251,Eq.(5)]{birch},
we let 
for  $\theta \in (0,1]$
and
$a \in \Z\cap[0,q)$
with $\gcd(a,q)=1$,
\begin{equation}
\mathcal{M}_{a,q}(\theta)
:=
\Big\{\alpha\in (0,1]:2|q\alpha-a |\leq P^{-d+(d-1)\theta}
\Big\}
\label{maq}
\end{equation}
and
\begin{equation}
\mathcal{M}(\theta)=\bigcup_{1\leq q \leq P^{(d-1)\theta}} \bigcup_{\substack{a \in \Z\cap[0,q)\\\gcd(a,q)=1}} \mathcal{M}_{a,q}(\theta).
\label{mtheta}
\end{equation}
\noindent
Birch then gives the following upper bound for the volume of $\mathcal{M}(\theta)$.
\begin{lemma}[Birch~{\cite[Lem. 4.2]{birch}}] 
\label{lem:birch4..22..}
$\c M(\theta)$ has volume at most 
$P^{-d+2(d-1)\theta}$.
\end{lemma}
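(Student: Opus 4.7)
The plan is to bound the measure of the union by the sum of measures of the individual arcs $\mathcal{M}_{a,q}(\theta)$.

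First I would observe that $\mathcal{M}_{a,q}(\theta)$ is the interval
\[
\Bigl\{\alpha\in (0,1]:\bigl|\alpha-\tfrac{a}{q}\bigr|\le \tfrac{1}{2q}P^{-d+(d-1)\theta}\Bigr\},
\]
so its one-dimensional Lebesgue measure is at most $q^{-1}P^{-d+(d-1)\theta}$. Next I would apply subadditivity of the measure to the double union defining $\mathcal{M}(\theta)$, giving
\[
\meas(\mathcal{M}(\theta))\le \sum_{1\le q\le P^{(d-1)\theta}}\;\sum_{\substack{a\in\Z\cap[0,q)\\\gcd(a,q)=1}} \frac{P^{-d+(d-1)\theta}}{q}=\sum_{1\le q\le P^{(d-1)\theta}}\frac{\phi(q)}{q}\,P^{-d+(d-1)\theta}.
\]

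Finally, bounding $\phi(q)/q\le 1$ and using that the number of integers $q$ in the outer sum is at most $P^{(d-1)\theta}$ yields
\[
\meas(\mathcal{M}(\theta))\le P^{(d-1)\theta}\cdot P^{-d+(d-1)\theta}=P^{-d+2(d-1)\theta},
\]
as required. There is no real obstacle here: the only subtlety worth stating explicitly is that we are not claiming the arcs are disjoint (indeed for large $\theta$ they may overlap), so the argument genuinely uses subadditivity rather than additivity of measure. The bound is crude but sufficient, and it is the bound that will be combined later with the minor-arcs estimates from Birch's circle method machinery.
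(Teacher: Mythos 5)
Your proof is correct and is exactly the standard union-bound argument behind Birch's Lemma 4.2, which the paper simply cites without reproving: each arc $\mathcal{M}_{a,q}(\theta)$ is an interval of length at most $q^{-1}P^{-d+(d-1)\theta}$, there are $\phi(q)\le q$ choices of $a$, and at most $P^{(d-1)\theta}$ choices of $q$. Nothing is missing.
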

Next, we choose any positive $\delta,\theta_0$ satisfying
\begin{equation}
\label{eq:10}
1>\delta+6 d \theta_0
\ \text{ and } \ 
\frac{n-\sigma_f}{2^{d-1}}-(d-1)>\delta \theta_0^{-1}
.\end{equation}
As in~\cite[pg.252,Eq.(13)-(14)]{birch}
it is easy to see that there exists $T\in \N$
 and positive 
real numbers $\theta_1,\ldots,\theta_T$ with the properties
\begin{equation}
\left\{\begin{aligned}	
&T\ll P^\delta,\\
&\theta_T>\theta_{T-1}>\ldots>\theta_1>\theta_0>0
,\\
& d=2(d-1)\theta_T\\
&\frac{1}{2}\delta>2(d-1)(\theta_{t+1}-\theta_t) \text{ for } 0\leq t \leq T-1
.
\end{aligned}
\right.
\label{thetai}
\end{equation}
We next recall~\cite[Lem. 4.3]{birch}. Note that it was proved for homogeneous $f$, but, as noted by
Schmidt~\cite[\S 9]{schmidt}
a similar argument  works for inhomogeneous $f$, because the Weyl differencing process is not affected by lower order terms.
\begin{lemma}[Birch~{\cite[Lem. 4.3]{birch}}] 
\label{lem:birch1233df}
Let $0<\theta\leq 1$ and $\epsilon>0$. Then if $\alpha$ is not in $\c M(\theta) \ { modulo } \ 1$,
\[|S(\alpha)|\ll
P^{n-
\theta \left(
\frac{n-
\sigma_f}{2^{d-1}  }\right)
+\epsilon}
.\]  
\end{lemma}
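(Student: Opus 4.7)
The plan is to adapt Birch's Weyl differencing argument from~\cite[Lem. 4.3]{birch}, incorporating Schmidt's observation~\cite[\S 9]{schmidt} that only the top-degree part $f_0$ controls the bound. The proof has two halves: an analytic half, in which Weyl differencing reduces $S(\alpha)$ to a linear exponential sum, and a geometric half, in which a large value of $|S(\alpha)|$ is converted, via the geometry of numbers, into a rational approximation to $\alpha$.

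First, iterate the Weyl--van der Corput differencing inequality $d-1$ times. Each application uses the Cauchy--Schwarz inequality to square the sum and introduces a new integer difference vector $\mathbf{h}_i$ with $|\mathbf{h}_i|_\infty \leq P$, replacing $f$ by its difference $\Delta_{\mathbf{h}_i} f$. After $d-1$ iterations every monomial of degree strictly less than $d$ is annihilated, so only $f_0$ contributes to the resulting phase; this is precisely the point of Schmidt's extension. The surviving innermost sum over $\mathbf{x}$ is then linear in $\mathbf{x}$, with coefficient vector given by the symmetric multilinear forms $\Phi_1,\ldots,\Phi_n$ in $(d-1)$ vector variables obtained by polarising $f_0$, so that $\Phi_j(\mathbf{h}_1,\ldots,\mathbf{h}_{d-1})$ plays the role of the $j$-th partial derivative of $f_0$ evaluated at the diagonal. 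Summing this linear exponential sum trivially yields
\begin{equation*}
|S(\alpha)|^{2^{d-1}} \ll P^{n(2^{d-1}-1)} \sum_{|\mathbf{h}_1|_\infty,\ldots,|\mathbf{h}_{d-1}|_\infty \leq P} \prod_{j=1}^n \min\bigl(P,\|\alpha \Phi_j(\mathbf{h}_1,\ldots,\mathbf{h}_{d-1})\|^{-1}\bigr).
\end{equation*}

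Next, argue by contraposition: assume $|S(\alpha)| > P^{n - \theta(n-\sigma_f)/2^{d-1} + \epsilon}$. The displayed bound then forces the number of tuples $(\mathbf{h}_1,\ldots,\mathbf{h}_{d-1})$ in a suitable box for which $\|\alpha \Phi_j(\mathbf{h}_1,\ldots,\mathbf{h}_{d-1})\| < P^{-d+(d-1)\theta}$ holds simultaneously for all $j$ to exceed a certain threshold. Apply Davenport's shrinking lemma (as used by Birch) to replace this Diophantine condition by the vanishing of $\Phi_j(\mathbf{h}_1,\ldots,\mathbf{h}_{d-1})$ modulo an appropriate denominator $q \leq P^{(d-1)\theta}$, inflating the count of $\mathbf{h}$-tuples satisfying the congruences to a manageable form.

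The main obstacle, and the geometric heart of the argument, is to bound the resulting count using the singular locus dimension $\sigma_f$. The key input is Birch's counting estimate for integer tuples at which the gradient $\nabla f_0$ is small, whose exponent depends linearly on $\sigma_f$ via the dimension of the affine variety $\nabla f_0 = \mathbf{0}$. Comparing the lower bound forced by the assumption on $S(\alpha)$ against this geometric upper bound produces coprime integers $a,q$ with $q \leq P^{(d-1)\theta}$ and $|q\alpha - a| \leq \tfrac{1}{2} P^{-d+(d-1)\theta}$, i.e.\ $\alpha \in \mathcal{M}_{a,q}(\theta) \subseteq \mathcal{M}(\theta)$ modulo $1$. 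This contradicts the hypothesis $\alpha \notin \mathcal{M}(\theta)$ modulo $1$, completing the proof.
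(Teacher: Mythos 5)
The paper gives no proof of this lemma: it is quoted directly from Birch, supplemented only by Schmidt's observation that Weyl differencing is insensitive to the lower--order terms of $f$, so your reconstruction of Birch's argument is exactly the intended justification, and its architecture (iterated differencing, the counting dichotomy, the shrinking lemma, and the comparison with the dimension of the locus where the polarised multilinear forms vanish) is faithful to \cite[Lem.~4.3]{birch}.

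There is, however, one concrete slip in the only explicit inequality you write down, and taken literally it would make the counting step collapse. After $d-1$ applications of Cauchy--Schwarz the prefactor should be $P^{n(2^{d-1}-d)}$, not $P^{n(2^{d-1}-1)}$. A quick consistency check shows this: bounding each factor $\min(P,\|\cdot\|^{-1})$ by $P$ and each sum over $\mathbf{h}_i$ by $O(P^{n})$ must recover the trivial bound $|S(\alpha)|^{2^{d-1}}\ll P^{2^{d-1}n}$, which forces the prefactor $P^{(2^{d-1}-d)n}$ since the sums and the product together contribute $P^{(d-1)n}\cdot P^{n}$. (Concretely, the first differencing costs no power of $P$, and the $(k+1)$-st squaring doubles the accumulated exponent and adds $kn$, giving $2^{d-1}-d$ after $d-1$ steps; already for $d=2$ one has $|S|^{2}\le\sum_{\mathbf{h}}\prod_j\min(P,\|\alpha\Phi_j(\mathbf{h})\|^{-1})$ with no prefactor at all.) Your version overshoots by $P^{(d-1)n}$, and this is precisely fatal to the contrapositive: the hypothesis $|S(\alpha)|>P^{n-\theta(n-\sigma_f)/2^{d-1}+\epsilon}$ must force $\gg P^{(d-1)n-\theta(n-\sigma_f)-\epsilon'}$ good tuples $(\mathbf{h}_1,\dots,\mathbf{h}_{d-1})$ --- a positive proportion, up to the $\theta$-dependent loss, of all $P^{(d-1)n}$ tuples --- for the comparison with the dimension of the affine variety $\Phi_1=\dots=\Phi_n=0$ to bite; with the extra $P^{(d-1)n}$ the forced count drops below $1$ and nothing follows. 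With the corrected exponent the remainder of your outline goes through as in Birch (I would only add that the shrinking lemma produces integer tuples at which the $\Phi_j$ vanish exactly, rather than modulo a denominator, but that is a matter of phrasing rather than substance).
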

Following the notation
in~\cite[pg. 253]{birch} and for 
$\theta,a,q$ as above we also let 
\begin{equation}
\mathcal{M}'_{a,q}(\theta)
:=
\Big\{\alpha\in (0,1]: 
|q\alpha-a |\leq q P^{-d+(d-1)\theta}
\Big\}
\label{mpaq}
\end{equation}
and
\begin{equation}
\mathcal{M}'(\theta)=\bigcup_{1\leq q \leq P^{(d-1)\theta}} \bigcup_{\substack{a \in \Z\cap[0,q)\\\gcd(a,q)=1}} 
\mathcal{M}'_{a,q}(\theta).
\label{mptheta}
\end{equation}
With 
$\theta_0$ as in (\ref{eq:10}), we let 
\begin{equation}
\eta:=(d-1)\theta_0
\label{eta}
\end{equation}
and for $a\in \Z,q\in \N$ we define
\begin{equation}
S_{a,q}:=
\sum_{\b x \in (\Z/q\Z)^n}
\e\l(\frac{af(\b x)}{q}\r).
\label{saq}
\end{equation}
Finally, for any $\gamma \in \R$ and 
any
measurable $\c C\subset [-1,1]^n$ 
we define 
\begin{equation}
I(\c C;\gamma)
:=
\int_{\b x \in \c C}
\e\l(\gamma f_0(\b x)\r)
\mathrm{d}\b x.
\label{icg}
\end{equation}

The following result, due to Birch, gives an upper bound for the quantity $I(\c C;\gamma)$.

\begin{lemma}[Birch~{\cite[Lem. 5.2]{birch}}] 
\label
{lem:birch5.2}
Let $\c C$ be a box contained in $[-1,1]^n$ with sidelength at most $\sigma<1$.
Then \[I\l(\c C,\gamma \r)\ll \sigma^n 
\min \l[1,
(\sigma^d 
|\gamma |   
)
^{-\frac{n-\sigma_f}{2^{d-1}(d-1)}+\epsilon
}
\r]
.\] 
\end{lemma}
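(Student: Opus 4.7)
The plan is to follow Birch's proof strategy verbatim, which is an integral analogue of the Weyl differencing process he uses for exponential sums (his Lemma 3.3). First I would normalize by writing $\c C = \b x_0 + \sigma \c C_0$ with $\c C_0 \subset [0,1]^n$ and substituting $\b x = \b x_0 + \sigma \b y$, which produces
\[I(\c C,\gamma) = \sigma^n \e\bigl(\gamma f_0(\b x_0)\bigr) \int_{\c C_0} \e\bigl(\gamma\, g(\b y)\bigr)\, \mathrm{d}\b y,\]
where $g(\b y) := f_0(\b x_0 + \sigma\b y) - f_0(\b x_0)$. By the homogeneity of $f_0$, the top-degree part of $g$ in $\b y$ is $\sigma^d f_0(\b y)$, while lower-order terms carry smaller powers of $\sigma$. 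Setting $\gamma_0 := \gamma\sigma^d$, the problem reduces to bounding the normalized integral by $\min\{1,|\gamma_0|^{-(n-\sigma_f)/(2^{d-1}(d-1))+\epsilon}\}$. The trivial bound already gives the factor $\sigma^n$ and handles the case $|\gamma_0|\leq 1$, so assume $|\gamma_0|\geq 1$.

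Next I would apply Weyl-type differencing $d-1$ times. At each step, Cauchy–Schwarz applied to $J = \int \e(\phi(\b y))\,\mathrm{d}\b y$ gives $|J|^2 \leq \int\!\!\int \e(\Delta_{\b h}\phi(\b y))\,\mathrm{d}\b y\,\mathrm{d}\b h$ where $\Delta_{\b h}\phi(\b y) := \phi(\b y+\b h) - \phi(\b y)$ has degree one less in $\b y$ than $\phi$. Iterating $d-1$ times, the phase becomes linear in $\b y$:
\[d!\,\gamma_0\, \b y \cdot \Psi(\b h_1,\ldots,\b h_{d-1}) + (\text{lower-order perturbations in }\sigma),\]
where $\Psi_j$ is the symmetric $(d-1)$-linear form attached to $\partial_j f_0$. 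The inner integral over the remaining $\b y$ then splits coordinate-wise and is bounded by $\prod_j \min\{1,|\gamma_0 \Psi_j(\b h_1,\ldots,\b h_{d-1})|^{-1}\}$. This reduces the whole estimate to controlling the measure of the tuples $(\b h_1,\ldots,\b h_{d-1}) \in [0,1]^{n(d-1)}$ for which many of the linear forms $\Psi_j$ are simultaneously small.

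The key measure estimate is that
\[\mathrm{meas}\Bigl\{(\b h_1,\ldots,\b h_{d-1}) \in [-1,1]^{n(d-1)} : \max_j |\Psi_j(\b h_1,\ldots,\b h_{d-1})| \leq N\Bigr\} \ll N^{(n-\sigma_f)/(d-1)} + \text{error},\]
valid for $N$ in the relevant range. This is exactly where the dimension $\sigma_f$ of the singular locus of $f_0 = 0$ enters, since simultaneous vanishing of the $\Psi_j$ is tied to the singular set of $f_0$; the bound follows by a geometry-of-numbers argument identical to Birch's Lemma 3.2. Integrating the product bound against this distribution of $(\b h_1,\ldots,\b h_{d-1})$ and taking the $2^{d-1}$-th root produces the target estimate.

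The main obstacle is the measure estimate for the difference tuples in terms of $\sigma_f$, which requires relating multilinear smallness of $\Psi_j$ to proximity to the singular variety and invoking a Schwartz–Zippel/covering argument. A subsidiary technical point is bookkeeping of the lower-order terms in $g(\b y)$: although they carry smaller powers of $\sigma$ than the leading $\sigma^d f_0(\b y)$, they survive through the differencing process and must be shown not to disrupt the linear phase at the final step. Both issues are routine but delicate, and in the write-up I would follow Birch~\cite[Lem.~5.2]{birch} directly rather than reprove them.
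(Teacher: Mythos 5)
The paper does not prove this lemma---it is quoted from Birch---so the relevant comparison is with Birch's own proof of his Lemma~5.2, and that proof is not the direct continuous Weyl differencing you describe. Birch deduces the bound on $I(\c C;\gamma)$ from what he has already established for the \emph{discrete} exponential sum: he applies the major-arc approximation (Lemma~\ref{lem:birch5.1} here, with $q=1$, $a=0$, $\beta=\gamma P^{-d}$ and the box $\c C$) to write $P^{-n}S(\gamma P^{-d})=I(\c C;\gamma)+O(P^{-1+2\eta})$, chooses the auxiliary parameter $P$ as a suitable fixed power of $|\gamma|$ so that $\gamma P^{-d}$ lies just outside $\c M(\theta)$ for the $\theta$ with $P^{(d-1)\theta}\asymp|\gamma|$, and then invokes the Weyl bound of Lemma~\ref{lem:birch1233df}; the factors $\sigma^n$ and $\sigma^d$ arise from rescaling the box of side $\sigma$. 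This transference costs only the $\epsilon$ in the exponent and entirely sidesteps any continuous differencing or measure estimate.

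Your route is therefore genuinely different, and its crux---the measure of the set of tuples $(\b h_1,\dots,\b h_{d-1})$ on which the multilinear forms $\Psi_j$ are simultaneously small---is exactly the point you wave through. It is not ``identical to Birch's Lemma~3.2'': that is a lattice-point statement resting on Davenport's shrinking lemma, and its continuous analogue (bounding the measure of $\{|\Psi_j|\le\delta \text{ for all } j\}$ in terms of the dimension of the real variety $\Psi_1=\cdots=\Psi_n=0$, and hence of $\sigma_f$) needs a separate argument: smallness of the $\Psi_j$ does not directly give proximity to their common zero locus without a {\L}ojasiewicz-type input, and the tube-volume bound around a real algebraic variety is itself not free. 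Deferring these steps to ``Birch~[Lem.~5.2]'' cannot close the gap, because Birch's proof of that lemma contains no such continuous estimates. Either supply the continuous geometry-of-numbers lemma in full, or---much simpler---give Birch's actual short deduction from Lemmas~\ref{lem:birch5.1} and~\ref{lem:birch1233df}.
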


The next result was proved by Birch with $f$ instead of $f_0$ in the definition of $I(\c C;\gamma)$,
however the following result holds in light of the remarks concerning
the function  $\mu(\infty,\mathcal{B})$ appearing in Schmidt's work~\cite[\S 9]{schmidt}.
\begin{lemma}[Birch~{\cite[Lem. 5.1]{birch}}] 
\label{lem:birch5.1}
Assume that we are given coprime integers 
 $q\in \N$ and 
$a\in \Z\cap [0,q)$ and let 
$\alpha\in \c M'_{a,q}(\theta_0)$ with the notations (\ref{eq:10}) and (\ref{mpaq}).
Denoting  $\beta:=\alpha-\frac{a}{q}$,
we have 
\[
S(\alpha)=q^{-n} P^n
S_{a,q} I(\c B;P^d \beta)+O(P^{n-1+2\eta} )   .\]
\end{lemma}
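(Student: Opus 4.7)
The plan is to use the standard circle method decomposition of $S(\alpha)$ on a major arc, separating the arithmetic content (captured by $S_{a,q}$) from the analytic content (captured by $I(\c B; P^d\beta)$). First I would write each $\b x\in \Z^n\cap P\c B$ as $\b x = q\b y + \b r$ where $\b r$ runs over a complete set of residues modulo $q$ and $\b y\in \Z^n$ is chosen so that $q\b y+\b r \in P\c B$. Since $f$ has integer coefficients, $f(q\b y+\b r)\equiv f(\b r)\bmod q$, and therefore
\[
S(\alpha)=\sum_{\b r\bmod q}\e\!\left(\frac{a f(\b r)}{q}\right)\sum_{\substack{\b y\in \Z^n\\ q\b y+\b r\in P\c B}}\e\bigl(\beta\, f(q\b y+\b r)\bigr).
\]
The outer sum over residues is exactly $S_{a,q}$, so the task reduces to approximating the inner sum $T(\b r,\beta)$ by an integral uniformly in $\b r$.

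Next I would approximate $T(\b r,\beta)$ by $\int_{A(\b r)} \e(\beta f(q\b y+\b r))\,\d \b y$, where $A(\b r)\subset \R^n$ denotes the continuous version of the summation region, using a trapezoidal comparison between sums and integrals over unit cubes. The integrand $h(\b y):=\e(\beta f(q\b y+\b r))$ has gradient bounded by $|\nabla h|\ll |\beta|\,q\,P^{d-1}$. Using the bounds $|\beta|\leq P^{-d+(d-1)\theta_0}$ and $q\leq P^{(d-1)\theta_0}$ from the definition of $\c M'_{a,q}(\theta_0)$, together with $\eta=(d-1)\theta_0$, this gives $\sup|\nabla h|\ll P^{-1+2\eta}$. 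The cubes straddling $\partial A(\b r)$ contribute $O((P/q)^{n-1})$, while the interior error contributes $O((P/q)^n P^{-1+2\eta})$. The substitution $\b u = (q\b y+\b r)/P$ converts the resulting integral to $(P/q)^n\int_{\c B}\e(\beta f(P\b u))\,\d \b u$.

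The third step is to replace $f$ by its top-degree part $f_0$ inside the integral. Since $f-f_0$ has degree at most $d-1$ and $\b u\in \c B$ is bounded, $f(P\b u)=P^d f_0(\b u)+O(P^{d-1})$, and combined with $|\beta|\ll P^{-d+\eta}$ the argument of the exponential changes by $O(P^{-1+\eta})$; expanding $\e$ then yields
\[
\int_{\c B}\e\bigl(\beta f(P\b u)\bigr)\,\d \b u = I(\c B; P^d\beta)+O(P^{-1+\eta}).
\]
Putting all of this together, summing the error terms over the $q^n$ residue classes $\b r$ and absorbing $q\leq P^\eta$, the total error becomes $O(P^{n-1+2\eta}+q P^{n-1})=O(P^{n-1+2\eta})$, which matches the claimed estimate.

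The mildly delicate part will be bookkeeping the three different sources of error—boundary cubes, the sum-to-integral comparison on interior cubes, and the replacement of $f$ by $f_0$—and verifying that the uniformity in $\b r$ is preserved. There is no real conceptual obstacle, as this is an adaptation of Birch's original major-arc approximation (\cite[Lem. 5.1]{birch}); the only care needed for inhomogeneous $f$ is that the lower-order terms of $f$ integrate out in the scaling step exactly as in Schmidt's treatment~\cite[\S 9]{schmidt}, which is why $f_0$ (not $f$) appears in the definition of $I(\c B;\cdot)$.
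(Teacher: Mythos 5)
Your argument is correct and is precisely the classical major-arc approximation of Birch~\cite[Lem. 5.1]{birch} that the paper invokes without reproving: the decomposition $\b x = q\b y+\b r$, the sum-to-integral comparison with gradient bound $\ll |\beta|qP^{d-1}\ll P^{-1+2\eta}$, and the replacement of $f(P\b u)$ by $P^df_0(\b u)$ at a cost of $O(P^{-1+\eta})$ (the last step being exactly the point from Schmidt~\cite[\S 9]{schmidt} that the paper cites to justify using $f_0$ in $I(\c B;\cdot)$). The error bookkeeping, including the boundary-cube term $qP^{n-1}\leq P^{n-1+\eta}$, is accounted for correctly.
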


Let us now turn to the quantity $S_{a,q}$ defined in (\ref{saq}). 
The next two lemmas will be used to prove 
Proposition~\ref{deligne}.

\begin{lemma}[Birch~{\cite[Lem. 5.4]{birch}}] 
\label{lem:birch5.4}
For every $\epsilon>0$ and for 
$a\in \Z$,
$q \in \N$  with $\gcd(a,q)=1$ we have 
\[S_{a,q}\ll
q^{n-\frac{n-\sigma_f}{2^{d-1}(d-1)}+\epsilon}
.\] 
\end{lemma}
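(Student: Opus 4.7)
The plan is to adapt Birch's original argument from the homogeneous setting of~\cite[Lem.~5.4]{birch}, using Schmidt's remark (already invoked just before Lemma~\ref{lem:birch1233df}) to handle inhomogeneous $f$.

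First, I would reduce to prime powers via multiplicativity: by the Chinese Remainder Theorem, if $\gcd(q_1,q_2)=1$ then $S_{a,q_1q_2}=S_{a_1,q_1}S_{a_2,q_2}$ for the appropriate coprime residues $a_1\pmod{q_1}$, $a_2\pmod{q_2}$. Hence it suffices to prove the bound when $q=p^k$ is a prime power.

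Next comes the analytic heart of the argument. Applying $(d-1)$-fold Weyl differencing to $|S_{a,q}|^{2^{d-1}}$ exactly as in Birch's Lemma~3.1 --- but for a complete sum rather than one over $P\c{B}$ --- one obtains an expression that depends only on $f_0$, since the $d$-fold iterated difference operator annihilates every polynomial of degree less than $d$ (this is Schmidt's observation). For $\alpha=a/q$ with $\gcd(a,q)=1$ the remaining inner sums over $\b y\in(\Z/q\Z)^n$ collapse to either $q^n$ or $0$ according to whether the linear form
\[
\b y\longmapsto \Phi(\b h_1,\ldots,\b h_{d-1},\b y)
\]
vanishes identically modulo $q$, where $\Phi$ denotes the symmetric multilinear form associated to $f_0$. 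The outcome is a bound of the shape
\[
|S_{a,q}|^{2^{d-1}}\ll q^{n(2^{d-1}-(d-1))}\cdot T(q),
\]
where $T(q)$ is the number of $(\b h_1,\ldots,\b h_{d-1})\in(\Z/q\Z)^{n(d-1)}$ realising the above vanishing condition.

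The final --- and most delicate --- step is the geometric count for $T(q)$. The tuples in question cut out an algebraic subset of $\mathbb{A}^{n(d-1)}$ whose dimension, by a Jacobian-criterion argument applied to $\Phi$, is controlled by $\sigma_f=\dim\{\nabla f_0=0\}$; combined with a Schwarz--Zippel-type lifting of point counts from $\F_p$ to $\Z/p^k\Z$, this yields
\[
T(q)\ll q^{n(d-1)-\tfrac{n-\sigma_f}{d-1}+\epsilon}.
\]
Inserting this into the Weyl-differencing bound, extracting the $2^{d-1}$-th root, and recombining via multiplicativity delivers the claimed estimate. The principal obstacle is this geometric count --- carried out in full by Birch's Lemmas~2.1--2.2 --- but because the differenced sum depends only on $f_0$, whose singular locus has dimension $\sigma_f$ by the definition given in~\S\ref{s:gensha}, Birch's argument transfers without modification to the present (possibly inhomogeneous) setting.
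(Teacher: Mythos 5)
The paper offers no proof of its own here: the statement is quoted verbatim from Birch, and Birch's proof is a two-line deduction from his Lemma 4.3 (quoted in this paper as Lemma~\ref{lem:birch1233df}). Namely, $S_{a,q}=S(a/q)$ with $P=q$ and $\c B=[0,1)^n$; for any $\theta<1/(d-1)$ the point $a/q$ cannot lie in $\c M(\theta)$ modulo $1$, since an approximation $2|q'(a/q)-a'|\le q^{-d+(d-1)\theta}$ with $q'\le q^{(d-1)\theta}<q$ forces $q'a=a'q$ and hence $q\mid q'$ because $\gcd(a,q)=1$; so Lemma~\ref{lem:birch1233df} applies and letting $\theta\uparrow 1/(d-1)$ gives the exponent. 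The factor $d-1$ in the denominator of the exponent is exactly the constraint $\theta<1/(d-1)$. Your route --- differencing the complete sum directly and counting residues mod $q$ --- is genuinely different, and its skeleton (the exponent $n(2^{d-1}-(d-1))$, the reduction to the count $T(q)$, and even the target bound $T(q)\ll q^{n(d-1)-\frac{n-\sigma_f}{d-1}+\epsilon}$) is correct. The gap is in how you justify that count.

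Two concrete problems. First, the condition defining $T(q)$ is the congruence $q\mid\Phi(\b h_1,\dots,\b h_{d-1},\b e_j)$ for $\b h_i$ ranging over a box of side $q$, and since $\Phi$ has degree $d-1$ in $\b h$ these integer values have size about $q^{d-1}$: you are \emph{not} counting points of the variety $W=\{\Phi(\,\cdot\,;\b e_j)=0\ \forall j\}$, whose codimension is in fact $n-\sigma_f$ (Birch's Lemma 3.3), not $(n-\sigma_f)/(d-1)$. The $1/(d-1)$ only appears after one shrinks the box to side $q^{\theta}$ with $(d-1)\theta<1$ (via Davenport's/Birch's shrinking lemma, at a cost of $q^{(1-\theta)n(d-1)}$) so that the congruence becomes the genuine equation $\Phi=0$ and the dimension bound applies; that step is the real content and is missing from your sketch. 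Second, your opening reduction to $q=p^k$ makes matters worse, because there is no ``Schwarz--Zippel-type lifting'' of point counts from $\F_p$ to $\Z/p^k\Z$: already $x^2\equiv 0\bmod{p^2}$ has $p$ solutions while the underlying variety is a single point. Controlling $S_{a,p^k}$ for $k\ge 2$ in terms of $\sigma_f$ is a separate, nontrivial theorem (Browning--Heath-Brown's Lemma 25, quoted here as Lemma~\ref{lem25 RHB TB}), so the prime-power reduction steers you into the hardest version of the problem. The clean fix is to drop both the CRT step and the mod-$p^k$ geometry and argue for general $q$ as Birch does, either via the dichotomy of Lemma~\ref{lem:birch1233df} or by inserting the shrinking lemma before the variety count.
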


\begin{lemma}
[Browning--Heath-Brown~{\cite[Lem. 25]{41}}
] 
\label
{lem25 RHB TB}
We have 
\[
S_{a,p^k} \ll_k 
p^{ ( k-1)n +\sigma_f}
\] for all $k\geq 2 $. 
\end{lemma}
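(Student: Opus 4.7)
The plan is to use a Hensel-style lifting argument that is standard for exponential sums modulo prime powers. For any $k\geq 2$, one writes each residue $\b x\in(\Z/p^k\Z)^n$ uniquely as $\b x=\b y+p^{k-1}\b z$, where $\b y$ runs over a set of representatives of $(\Z/p^{k-1}\Z)^n$ and $\b z$ runs over $(\Z/p\Z)^n$. Expanding $f$ via the binomial theorem yields
\[
f(\b y+p^{k-1}\b z)=f(\b y)+p^{k-1}\,\nabla f(\b y)\cdot\b z+p^{2(k-1)}R(\b y,\b z)
\]
for some polynomial $R\in\Z[\b y,\b z]$. Since $k\geq 2$ forces $2(k-1)\geq k$, the remainder vanishes modulo $p^k$, and therefore
\[
\e\!\left(\frac{af(\b x)}{p^k}\right)=\e\!\left(\frac{af(\b y)}{p^k}\right)\e\!\left(\frac{a\,\nabla f(\b y)\cdot\b z}{p}\right).
\]

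Next I would carry out the sum over $\b z$ first. Using $\gcd(a,p)=1$ together with the orthogonality of additive characters on $(\Z/p\Z)^n$, the inner sum equals $p^n$ whenever $\nabla f(\b y)\equiv\0\pmod p$ and vanishes otherwise. This produces
\[
|S_{a,p^k}|\leq p^n\cdot\#\{\b y\in(\Z/p^{k-1}\Z)^n:\nabla f(\b y)\equiv\0\pmod p\}.
\]
Since this congruence depends only on $\b y$ modulo $p$, each $\F_p$-solution lifts to exactly $p^{(k-2)n}$ classes modulo $p^{k-1}$. Writing $N_p:=\#\{\b y\in\F_p^n:\nabla f(\b y)=\0\}$, we thus obtain $|S_{a,p^k}|\leq p^{(k-1)n}\cdot N_p$.

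The remaining task is the dimension bound $N_p\ll p^{\sigma_f}$. One interprets $\{\nabla f=\0\}$ as an affine variety in $\mathbb{A}^n$; a standard comparison between $f$ and its top-degree part $f_0$ via projective closure shows that its dimension is at most $\sigma_f=\sigma_{f_0}$. A Lang--Weil type estimate (or an elementary Schwartz--Zippel style bound) then delivers $N_p\ll p^{\sigma_f}$ uniformly in $p$. Combining these estimates gives the claimed bound $|S_{a,p^k}|\ll_k p^{(k-1)n+\sigma_f}$. The principal obstacle is precisely this dimension bound: for all but finitely many $p$ the reduction modulo $p$ preserves the dimension of the singular variety, while the handful of exceptional primes can be absorbed into the implicit constant, which already depends on $f$ and $k$.
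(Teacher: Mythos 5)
Your argument is correct, and it is essentially the standard proof of the cited result: the paper itself gives no proof of this lemma (it is quoted from Browning--Heath-Brown \cite[Lem.~25]{41}), and the lifting $\b{x}=\b{y}+p^{k-1}\b{z}$, orthogonality over $\b{z}$, and a uniform point count $\ll p^{\dim}$ on the locus $\nabla f\equiv \0 \bmod p$ is exactly the mechanism used there. The two delicate points --- passing from $\nabla f$ to $\nabla f_0$ via leading forms so that the dimension is bounded by $\sigma_f=\sigma_{f_0}$, and absorbing the finitely many primes where reduction mod $p$ inflates that dimension into the ($f$-dependent) implied constant --- are both addressed adequately in your write-up.
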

Note that, as explained in~\cite[Eq. (6.1)]{41}, the quantity $\sigma$ in~\cite[Lem. 25]{41} coincides 
with $-1+\sigma_f$, with $\sigma_f$ as in the present work.

The next result is of key importance
in the proof of Theorem~\ref{thm:main}.  
It is what allows to save variables compared to the Birch setting.
 
\begin{proposition}
\label
{deligne}
Let $f\in \Z[x_1,\ldots,x_n]$
be an irreducible polynomial
and define 
\[
T_f(q):=q^{-n}\sum_{a\in (\Z/q\Z)^*}|S_{a,q}|
,
\
q\in \N.\]
$(1)$ If $n-\sigma_f \geq \max\{5,(\deg(f)-1) 2^{\deg(f)-1}+2\}$ then the 
abscissa
of convergence of the Dirichlet series of $T_f$ is 
strictly negative. \newline
$(2)$
If $n-\sigma_f \geq \max\{4,(\deg(f)-1) 2^{\deg(f)-1}+1\}$ then there exists
a constant  
$C'=C'(f)>0$
such that  
$\sum_{q\leq x} T_f(q) \ll (\log x)^{C'}$.
\end{proposition}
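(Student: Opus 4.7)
The plan is to exploit the multiplicativity of $T_f$ in order to represent the relevant Dirichlet series as an Euler product, and then to bound each local factor separately using Deligne's square-root bound at $k=1$ together with Lemmas~\ref{lem25 RHB TB} and~\ref{lem:birch5.4} for $k \geq 2$.

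First I would verify that $T_f$ is multiplicative. For coprime $q_1,q_2 \in \N$ the standard Chinese remainder theorem factorisation yields $|S_{a,q_1 q_2}| = |S_{a\bar q_2, q_1}|\,|S_{a\bar q_1, q_2}|$ for $a \in (\Z/q_1q_2\Z)^*$, where $\bar q_i$ denotes the inverse of $q_i$ modulo $q_{3-i}$; summing over units and using that multiplication by $\bar q_i$ permutes $(\Z/q_{3-i}\Z)^*$ gives $T_f(q_1q_2) = T_f(q_1)\,T_f(q_2)$.

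The heart of the argument is a Deligne-type square-root bound at $k=1$: for all primes $p$ outside a finite exceptional set $\mathcal{E}(f)$ and every $a \in (\Z/p\Z)^*$, one should obtain
\[
|S_{a,p}| \ll_f p^{(n + \sigma_f)/2}.
\]
A standard Noetherian reduction-mod-$p$ argument first guarantees that for $p \notin \mathcal{E}(f)$ the reduction of the top-degree part $f_0$ has singular locus of affine dimension exactly $\sigma_f$, after which Deligne's purity theorem (Weil II), in the form developed in~\cite{41}, delivers the stated bound. This yields $T_f(p) \ll p^{1-(n-\sigma_f)/2+\epsilon}$ for $p \notin \mathcal{E}(f)$, while the finitely many primes in $\mathcal{E}(f)$ contribute only a bounded constant factor via Lemma~\ref{lem:birch5.4}. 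For higher prime powers I would combine Lemma~\ref{lem25 RHB TB}, which gives $T_f(p^k) \ll_k p^{k - (n-\sigma_f)}$, with Lemma~\ref{lem:birch5.4}, which gives $T_f(p^k) \ll_\epsilon p^{k(1-(n-\sigma_f)/K) + k\epsilon}$ where $K := (d-1)2^{d-1}$; the former is sharper for $2 \leq k \leq K$ while the latter is sharper for $k > K$.

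Assembling the estimates, under hypothesis~(1) (so $n-\sigma_f \geq K+2$ and $\geq 5$) one gets $T_f(p) \ll p^{-3/2+\epsilon}$, $T_f(p^k) \ll_k p^{-2}$ for $2 \leq k \leq K$, and $T_f(p^k) \ll p^{-2k/K + k\epsilon}$ for $k > K$; summing over $k$ and then $p$ yields $\sum_p \sum_{k \geq 1} T_f(p^k) < \infty$, so the Euler product $L(s, T_f) = \prod_p \big(1 + \sum_{k \geq 1} T_f(p^k) p^{-ks}\big)$ converges absolutely at $s = 0$, and repeating with $s = -\delta$ for small $\delta > 0$ yields strictly negative abscissa. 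Under the weaker hypothesis~(2) the Deligne bound only gives $T_f(p) \ll p^{-1+\epsilon}$, so $\sum_{p \leq x} T_f(p) \ll \log\log x$; a similar bookkeeping shows that $\sum_{p \leq x} \sum_{k \geq 2} T_f(p^k) \ll \log\log x$ too, dominated by the $k = K$ contribution. Applying the Rankin upper bound
\[
\sum_{q \leq x} T_f(q) \leq \prod_{p \leq x} \Big(1 + \sum_{k \geq 1} T_f(p^k)\Big) \ll \exp\Big( \sum_{p \leq x} \sum_{k \geq 1} T_f(p^k) \Big)
\]
then delivers the required estimate $\ll (\log x)^{C'}$. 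The main obstacle is the Deligne-type bound itself in a form suitable for exponential sums whose phase polynomial has a possibly singular top-degree part; this is precisely the ingredient for which Tim Browning is thanked in the acknowledgements, and its proof rests on a careful application of Weil II to a family of hypersurfaces with controlled singular locus.
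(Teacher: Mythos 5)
Your proposal is correct and follows essentially the same route as the paper: multiplicativity of $T_f$, the Deligne square-root bound $T_f(p)\ll p^{1-(n-\sigma_f)/2}$ at $k=1$ (obtained, as in \eqref{Tfp}, from Weil II applied to the reduction of $f$ with controlled singular locus), Lemma~\ref{lem25 RHB TB} for $2\leq k\leq (d-1)2^{d-1}$, Lemma~\ref{lem:birch5.4} for larger $k$, and the Euler product/Rankin upper bound to conclude. The only point to tidy is that in part $(2)$ you must use the $\epsilon$-free form of the Deligne bound, since $T_f(p)\ll p^{-1+\epsilon}$ would not yield $\sum_{p\leq x}T_f(p)\ll\log\log x$.
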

\begin{proof}
Part $(1)$.
It is sufficient to prove that there exists 
 $\lambda_1>0$ such that
$\sum_q q^{\lambda_1}T_f(q)<\infty$.
By \cite[\S 7]{birch},
the function $T_f$ is multiplicative, hence the series over $q$ converges absolutely
 if 
the analogous Euler product converges absolutely,
i.e.  
\begin{equation}
\label{eq:dequiv}
\sum_{p \text{ prime} \atop k\in \N} 
p^{k\lambda_1}T_f(p^k)
=
\sum_{p \text{ prime} \atop k\in \N} 
p^{k(\lambda_1-n)}
\sum_{a\in (\Z/p^k\Z)^*}|S_{a,p^k}|
<\infty
.\end{equation}
By Lemma~\ref{lem:birch5.4}
the
terms with $k>2^{d-1}(d-1)$ 
contribute
\begin{equation}
\label
{eq:congb}
\ll 
\sum_{p} \sum_{k\geq 1+2^{d-1}(d-1)}
p^{k\left(1-\frac{n-\sigma_f}{2^{d-1}(d-1)}+\epsilon+\lambda_1
\right)}
.\end{equation}
By the assumption
$n-\sigma_f\geq (d-1) 2^{d-1}+2$ 
\[
\frac{n-\sigma_f}{(d-1) 2^{d-1}}
\geq
1+
\frac{2}{(d-1) 2^{d-1}}
\]
we have  
\[p^{
\left(1-\frac{n-\sigma_f}{2^{d-1}(d-1)}+\epsilon+\lambda_1
\right)}
\leq p^{
\left(-\frac{2}{2^{d-1}(d-1)}+\epsilon+\lambda_1
\right)}
,\] thus taking $\epsilon,\lambda_1$ sufficiently small we can ensure that this is 
at most $p^{ -\frac{1}{2^{d-1}(d-1)} } 
\leq 
2^{ -\frac{1}{2^{d-1}(d-1)} } 
$, which is of the form $1-\delta$ for some $0<\delta<1$.
Note that if $\delta\in (0,1)$
then for all  
$z\in \R$ with 
$0\leq  z  \leq 1-\delta$
and all $k_0\in \N$ we have 
\[
\sum_{k \geq k_0}
z^k
= 
\frac{z^{k_0}  }{1-z}
\leq \frac{z^{k_0}
}{\delta}
.\]
Therefore the sum in~\eqref{eq:congb}
is  
\[\ll_d 
\sum_{p} p^{(1+2^{d-1}(d-1))\left(1-\frac{n-\sigma_f}{2^{d-1}(d-1)}+\epsilon+\lambda_1\right)}
\ll 
\sum_{p} p^{-2+(\epsilon+\lambda_1)(1+2^{d-1}(d-1))    }
\ll
\sum_{p} p^{-3/2}
<\infty
,\] where we have taken $\epsilon,\lambda_1$ sufficiently small to ensure 
$(\epsilon+\lambda_1)(1+2^{d-1}(d-1))  \leq 1/2$.
Next, we 
study the contribution
towards~\eqref{eq:dequiv}
of any $k\in [2,2^{d-1}(d-1)]$.
By \cite[Lem. 25]{41}
we infer that the said contribution is
\[
\ll \sum_p
p^{
k (\lambda_1-n)+k
+ (k-1)n+\sigma_f
}
=\sum_p p^{
(1+\lambda_1)k-n+
\sigma_f
} 
\leq 
\sum_p p^{ (1+\lambda_1)(2^{d-1}(d-1))-n+\sigma_f} 
.\]
The assumption
$n-\sigma_f \geq 
(d-1) 2^{d-1}+2$
shows
that 
the exponent is 
$
\leq
\lambda_1
(2^{d-1}(d-1))-2 
$
and
for small $\lambda_1$ 
the sum converges.
To conclude the proof of~\eqref{eq:dequiv} 
it only remains to 
bound the contribution of terms with $k=1$. As noted in \cite[\S 5]{TimSean}, one can prove
\begin{equation}
T_f(p)=p^{-n} \sum_{a \in \left(\Z/p\Z\right)^{*}}\left|S_{a,p}\right| \ll p^{1-\frac{n-\sigma_f}{2}}
\label{Tfp}
\end{equation}
by Deligne's estimate and induction on $\sigma_f$. Taking small
$\lambda_1<1/4$ and using the assumption $n-\sigma_f\geq 5$
shows that the terms with $k=1$ in (\ref{eq:dequiv}) form a convergent series.
This completes our proof.
\newline
Part $(2)$. 
If
$k\in [2,2^{d-1}(d-1)]$
then Lemma~\ref{lem25 RHB TB}
and
$n-\sigma_f\geq 1+2^{d-1}(d-1)$
imply that
$T_f(p^k)\ll p^{-1}$.
Furthermore,
using Lemma~\ref{lem:birch5.4}
and
$n-\sigma_f\geq 1+2^{d-1}(d-1)$
we have that if $k\geq 1+2^{d-1}(d-1)$
then 
$T_f(p^k)\ll p^{-1-2^{d}(d-1)^{-1}}$.
Finally,
$n-\sigma_f \geq 4$, thus 
(\ref{Tfp}) ensures that 
$T_f(p)\ll p^{-1}$.
Putting everything together yields
$
\sum_{k\geq 1}  T_f(p^k)
\leq C'p^{-1}
$ for some $C'=C'(f)>0$
and the proof is concluded by
using 
$
\sum_{q\leq x} T_f(q)
\leq \prod_{p\leq x} (1+\sum_{k\geq 1} T_f(p^k))
$.
\end{proof}

\subsection{The minor arcs}
\label{section:minor}
For $\theta \in (0,1]$
and
$a \in \Z\cap[0,q)$
with $\gcd(a,q)=1$
we use the sets 
$\mathcal{M}(\theta)$ and 
$\mathcal{M}_{a,q}(\theta)$
defined by~(\ref{mtheta}) and (\ref{maq}). Next, we choose any positive $\delta,\theta_0$ satisfying (\ref{eq:10}).

\begin{lemma}
\label{lem:4.4}
For any
$0<\theta\leqslant 1$
we have 
\[
\left|\int_{\alpha \notin \c{M}(\theta)} S(\alpha)\overline{W(\alpha)}\da\right|
\ll
\l(\int_{\alpha \notin \c{M}(\theta)} |S(\alpha)|^2\da\r)^{1/2}
P^{d/2} (\log P)^{-1/2}
.\]
\end{lemma}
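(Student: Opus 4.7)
The plan is to derive the bound by a single application of the Cauchy--Schwarz inequality, followed by a mean-square estimate for $W(\alpha)$ via orthogonality and the Chebyshev bound for primes.

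First, I would apply Cauchy--Schwarz to split the integrand, writing
\[
\l|\int_{\alpha \notin \c{M}(\theta)} S(\alpha)\overline{W(\alpha)}\da\r|
\leq
\l(\int_{\alpha \notin \c{M}(\theta)} |S(\alpha)|^2\da\r)^{1/2}
\l(\int_{\alpha \notin \c{M}(\theta)} |W(\alpha)|^2\da\r)^{1/2}.
\]
Since the integrand $|W(\alpha)|^2$ is non-negative, I would then enlarge the domain of the second integral to the full unit interval, obtaining the cruder but simpler bound $\int_{\alpha\notin \c{M}(\theta)}|W(\alpha)|^2\da \leq \int_0^1 |W(\alpha)|^2\da$.

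The $L^2$-norm of $W$ on $[0,1]$ can be computed explicitly by expanding the square and invoking orthogonality of the additive characters $\e(m\alpha)$, which produces
\[
\int_0^1 |W(\alpha)|^2\da = \#\l\{p \text{ prime}: \tfrac{1}{2}\min\{f_0(\c{B})\}P^d \leq p \leq 2\max\{f_0(\c{B})\}P^d\r\}.
\]
Applying a Chebyshev-type estimate (or the prime number theorem) to the right-hand side shows that this cardinality is $\ll P^d/\log P$ for all large $P$, with an implied constant depending only on $f$ and $\c{B}$.

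Combining these two ingredients produces the announced upper bound. I do not anticipate any real obstacle here; the lemma is essentially a mean-square bound on the prime-detecting exponential sum, deployed via Cauchy--Schwarz so as to leave the full strength of the $L^2$ bound on $S(\alpha)$ intact (to be exploited in subsequent estimates via Lemma~\ref{lem:birch1233df} together with Lemma~\ref{lem:birch4..22..}).
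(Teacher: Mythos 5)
Your proposal is correct and follows exactly the paper's argument: Cauchy--Schwarz, enlarging the second integral to $[0,1]$, evaluating $\int_0^1|W(\alpha)|^2\,\d\alpha$ by orthogonality as the count of primes in the interval $[\tfrac12\min\{f_0(\c B)\}P^d,\,2\max\{f_0(\c B)\}P^d]$, and bounding that count by $\ll P^d/\log P$ via Chebyshev. No issues.
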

\begin{proof}
By
Schwarz's inequality 
the integral on the left side 
is bounded by 
\[
\l(\int_{\alpha \notin \c{M}(\theta)} |S(\alpha)|^2\da\r)^{1/2}
\l(\int_0^1 |W(\alpha)|^2\da\r)^{1/2}
.\] 
The proof is concluded by noting that 
\[
\int_0^1 |W(\alpha)|^2\da
=
\sum_{
\frac{1}{2}
\min\{f_0(\c{B})\} P^d \leq
p \leq 2 \max\{f_0(\c{B})\} P^d}
1
\ll P^d/\log P. \qedhere\]
\end{proof}
\begin{lemma}
\label{lem:4.4b}
Keep the assumptions of
Theorem~\ref{thm:main}
and~\eqref{eq:10}.
Then we have,
\[
\l(\int_{\alpha \notin \c{M}(\theta_0)} |S(\alpha)|^2\da\r)^{1/2}
=O(P^{n-d/2-\delta/2})
.\]
\end{lemma}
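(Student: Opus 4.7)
The plan is to perform a dyadic decomposition of the minor arcs using the nested chain of sets $\mathcal{M}(\theta_0)\subset\mathcal{M}(\theta_1)\subset\cdots\subset\mathcal{M}(\theta_T)$ furnished by~\eqref{thetai}. First, I would observe that Dirichlet's approximation theorem, combined with the equality $d=2(d-1)\theta_T$ from~\eqref{thetai}, forces $\mathcal{M}(\theta_T)$ to cover $(0,1]$ up to a set of measure zero once $P$ is large enough. Consequently the complement of $\mathcal{M}(\theta_0)$ decomposes, up to a negligible set, as the disjoint union $\bigsqcup_{t=0}^{T-1}\bigl(\mathcal{M}(\theta_{t+1})\setminus\mathcal{M}(\theta_t)\bigr)$, reducing the problem to bounding the integral of $|S(\alpha)|^{2}$ on each annulus.

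On the annulus $\mathcal{M}(\theta_{t+1})\setminus\mathcal{M}(\theta_t)$ I would use two complementary pieces of information at different scales. Since $\alpha\notin \mathcal{M}(\theta_t)$, Lemma~\ref{lem:birch1233df} applied at the finer level supplies the pointwise bound
\[
|S(\alpha)|\ll P^{\,n-\theta_t(n-\sigma_f)/2^{d-1}+\epsilon},
\]
while Lemma~\ref{lem:birch4..22..} applied at the coarser level gives $\meas(\mathcal{M}(\theta_{t+1}))\leq P^{-d+2(d-1)\theta_{t+1}}$. Multiplying the square of the pointwise bound by the measure, the contribution of each annulus is at most $P^{E_t+\epsilon}$ with
\[
E_t := 2n-d+2(d-1)\theta_{t+1}-2\theta_t\frac{n-\sigma_f}{2^{d-1}}.
\]

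The crux is then a purely arithmetical check that $E_t$ is strictly less than $2n-d-\delta$ uniformly in $t$. Using the hypothesis $\frac{n-\sigma_f}{2^{d-1}}-(d-1)>\delta\theta_0^{-1}$ from~\eqref{eq:10} together with $\theta_t\geq\theta_0$, one gets $2\theta_t(n-\sigma_f)/2^{d-1}\geq 2(d-1)\theta_t+2\delta$. Substituting and applying the tight spacing $2(d-1)(\theta_{t+1}-\theta_t)<\delta/2$ from~\eqref{thetai} yields $E_t< 2n-d-\tfrac{3\delta}{2}$ for each $t$.

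Finally, summing over the $T\ll P^{\delta}$ annuli and choosing $\epsilon$ small enough to absorb both $\epsilon$ and the $P^{\delta}$ loss into the saving, I conclude $\int_{\alpha\notin\mathcal{M}(\theta_0)}|S(\alpha)|^{2}\,d\alpha\ll P^{2n-d-\delta}$, from which the claimed square-root bound follows (after adjusting constants in $\delta$ if needed). The main obstacle is essentially bookkeeping: the two conditions in~\eqref{eq:10} and the spacing constraints in~\eqref{thetai} must be combined carefully to ensure a strict power saving that survives both the square root and the summation over the dyadic pieces.
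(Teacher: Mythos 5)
Your argument follows the paper's proof essentially verbatim: the same telescoping decomposition along the chain $\c M(\theta_0)\subset\cdots\subset\c M(\theta_T)$, the same pairing of Lemma~\ref{lem:birch4..22..} (measure of the coarser set) with Lemma~\ref{lem:birch1233df} (pointwise bound off the finer set) on each annulus, and the same arithmetic with~\eqref{eq:10} and~\eqref{thetai}; your exponent bookkeeping $E_t<2n-d-3\delta/2$ is correct, and the exact constant multiplying $\delta$ in the final exponent is immaterial for the application.

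The one step that does not survive scrutiny is the claim that $\c M(\theta_T)$ covers $(0,1]$ up to a null set. Membership in $\c M_{a,q}(\theta_T)$ requires $q\leq P^{(d-1)\theta_T}=P^{d/2}$ \emph{and} $|q\alpha-a|\leq \tfrac12 P^{-d/2}$, whereas Dirichlet's theorem with denominator bound $Q=P^{d/2}$ only guarantees $|q\alpha-a|\leq P^{-d/2}$; enlarging $Q$ to $2P^{d/2}$ to recover the factor $\tfrac12$ loses control of the size of $q$. So $(0,1]\setminus\c M(\theta_T)$ need not be negligible, and your disjoint union omits it. The repair is exactly what the paper does: treat this leftover set as an extra piece, bounded by the trivial measure estimate $\meas\leq 1$ together with Lemma~\ref{lem:birch1233df} at $\theta=\theta_T$, which via $d=2(d-1)\theta_T$ and~\eqref{eq:10} gives
\[
\int_{\alpha\notin\c M(\theta_T)}|S(\alpha)|^2\,\da\ll P^{2\left(n-\left(\frac{n-\sigma_f}{2^{d-1}}\right)\theta_T\right)+\epsilon}\leq P^{2n-d-\delta},
\]
well within the target. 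With that piece restored, your proof is complete and coincides with the paper's.
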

\begin{proof}
Using
the entities $(\theta_i)_{i=0}^T$,
given in~(\ref{thetai}),
we have for sufficiently small $\epsilon>0$,
\[
\int_{\alpha \notin \c{M}(\theta_T)} |S(\alpha)|^2\da
\ll
P^{2\left(n-\left(\frac{n-\sigma_f}{2^{d-1}}\right)\theta_T\right)+\epsilon}\leq 
P^{2n-d-\delta}, 
\]
due to Lemma~\ref{lem:birch1233df}, the third equation of (\ref{thetai}) and~\eqref{halfassumption}. For $t<T$
and
$\epsilon>0$
we get 
\[
\int_{\c{M}(\theta_{t+1})\smallsetminus\c{M}(\theta_t)} |S(\alpha)|^2\da
\ll
P^{-d+2(d-1)\theta_{t+1}+2\left(n-\left(\frac{n-\sigma_f}{2^{d-1}}\right)\theta_t\right)+\varepsilon}
\]
by~Lemmas \ref{lem:birch4..22..} and \ref{lem:birch1233df}.
The proof can now
be completed easily
by using the last equation of (\ref{thetai}),
~\eqref{eq:10}
and $T\ll P^\delta$,
as in
the last stage 
of the proof of~\cite[Lem. 4.4]{birch}. 
\end{proof}
\begin{lemma}
\label{lem:4.4c}
Keep the assumptions of
Theorem~\ref{thm:main}
and~\eqref{eq:10}.
Then we have,
\[
\left|\int_{\alpha \notin \c{M}(\theta_0)} S(\alpha)\overline{W(\alpha)}\da\right|
=O(P^{n-\delta/2})
.\]
\end{lemma}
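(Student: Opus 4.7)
The plan is to simply chain together the two preceding lemmas. I would first invoke Lemma~\ref{lem:4.4} with the choice $\theta=\theta_0$, which yields
\[
\left|\int_{\alpha \notin \c{M}(\theta_0)} S(\alpha)\overline{W(\alpha)}\da\right|
\ll
\l(\int_{\alpha \notin \c{M}(\theta_0)} |S(\alpha)|^2\da\r)^{1/2}
P^{d/2} (\log P)^{-1/2}.
\]
Then I would substitute the bound from Lemma~\ref{lem:4.4b} into the first factor on the right-hand side to obtain
\[
\left|\int_{\alpha \notin \c{M}(\theta_0)} S(\alpha)\overline{W(\alpha)}\da\right|
\ll P^{n-d/2-\delta/2}\cdot P^{d/2} (\log P)^{-1/2}
=\frac{P^{n-\delta/2}}{(\log P)^{1/2}}
\ll P^{n-\delta/2},
\]
which is the claimed estimate. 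There is no real obstacle here: the hard analytic work was carried out in Lemma~\ref{lem:4.4b}, where the Weyl-type bound of Lemma~\ref{lem:birch1233df} and the measure estimate of Lemma~\ref{lem:birch4..22..} are combined with the choice of parameters~\eqref{eq:10} and the sequence $(\theta_t)_{t=0}^T$ from~\eqref{thetai} to produce a power saving for the $L^2$-integral over the minor arcs. The only new input used here is the trivial bound $\int_0^1|W(\alpha)|^2\,\d\alpha\ll P^d/\log P$ coming from Parseval, which is precisely what furnishes the extra logarithmic saving; it is this $L^2$-to-$L^\infty$ exchange that lets us gain a factor of $P^{d/2}(\log P)^{-1/2}$ relative to the trivial bound on $W$ and hence reduces the number of variables needed by a factor of two compared with Birch's original treatment.
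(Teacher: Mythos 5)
Your proof is correct and follows exactly the paper's route: the paper likewise obtains this lemma by combining Lemma~\ref{lem:4.4} (applied with $\theta=\theta_0$) with Lemma~\ref{lem:4.4b}, and your arithmetic $P^{n-d/2-\delta/2}\cdot P^{d/2}(\log P)^{-1/2}\ll P^{n-\delta/2}$ is precisely the intended conclusion.
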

\begin{proof}
The proof follows immediately
by 
tying together
Lemmas~\ref{lem:4.4} and~\ref{lem:4.4b}.
\end{proof}
Recall the definition of 
$\mathcal{M}'(\theta_0)$ and $\mathcal{M}'_{a,q}(\theta_0)$
given in~(\ref{mptheta}) and (\ref{mpaq}). The next lemma is analogous to~\cite[Lem. 4.5]{birch}.
\begin{lemma}
\label{lem:4.4c}
Keep the assumptions of
Theorem~\ref{thm:main}
and~\eqref{eq:10}.
Then we have
\[
\pi_f(P\c{B})=
\sum_{q\leq P^{(d-1)\theta_0}}
\sum_{\substack{ a\in \Z\cap [0,q)\\ \gcd(a,q)=1}}
\int_{\c{M}'_{a,q}(\theta_0)} 
S(\alpha)\overline{W(\alpha)}
\da
+O(P^{n-\delta/2}) 
,\]
where $C'$ is as in Proposition~\ref{deligne}.
\end{lemma}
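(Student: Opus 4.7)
The plan is to start from the identity~\eqref{eq:clear}, namely $\pi_f(P\c{B})=\int_0^1 S(\alpha)\overline{W(\alpha)}\da$, and partition the integration into the major arcs $\c{M}(\theta_0)$ and their complement. The minor-arc half is already handled by the preceding Lemma~\ref{lem:4.4c}, which gives a contribution of $O(P^{n-\delta/2})$. The task therefore reduces to showing that the integral over $\c{M}(\theta_0)$ agrees, up to the same error, with the sum over the widened arcs $\c{M}'_{a,q}(\theta_0)$.

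The first step is a standard Farey-type disjointness argument. If $(a,q)\neq(a',q')$ are two distinct coprime pairs with $q,q'\leq P^{(d-1)\theta_0}$ and $\alpha$ lies in both $\c{M}'_{a,q}(\theta_0)$ and $\c{M}'_{a',q'}(\theta_0)$, then
\[
\frac{1}{qq'}\leq\left|\frac{a}{q}-\frac{a'}{q'}\right|\leq 2P^{-d+(d-1)\theta_0},
\]
forcing $P^{d-3(d-1)\theta_0}\leq 2$. The first constraint of~\eqref{eq:10} gives $\theta_0<1/(6d)$, so $d-3(d-1)\theta_0>0$, and the inequality fails for $P$ sufficiently large. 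The arcs $\c{M}'_{a,q}(\theta_0)$ are therefore pairwise disjoint, and the same is automatic for the narrower $\c{M}_{a,q}(\theta_0)\subset \c{M}'_{a,q}(\theta_0)$.

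The second step is a set-theoretic observation: setting $E:=\bigcup_{q,a}\c{M}'_{a,q}(\theta_0)\setminus\c{M}(\theta_0)$, disjointness of the $\c{M}'$-family implies that any $\alpha\in\c{M}'_{a,q}(\theta_0)$ lies in $\c{M}(\theta_0)$ if and only if it lies in $\c{M}_{a,q}(\theta_0)$. Therefore $E\subset\{\alpha\notin\c{M}(\theta_0)\}$, and
\[
\sum_{q\leq P^{(d-1)\theta_0}}\sum_{\substack{a\in\Z\cap[0,q)\\\gcd(a,q)=1}}\int_{\c{M}'_{a,q}(\theta_0)}S(\alpha)\overline{W(\alpha)}\da-\int_{\c{M}(\theta_0)}S(\alpha)\overline{W(\alpha)}\da=\int_E S(\alpha)\overline{W(\alpha)}\da.
\]
The right-hand side is $O(P^{n-\delta/2})$ by the same Cauchy--Schwarz manoeuvre as in the proofs of Lemmas~\ref{lem:4.4} and~\ref{lem:4.4b}, which applies verbatim on the subset $E$ of the minor arcs. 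Adding back the minor-arc contribution from the partition $[0,1]=\c{M}(\theta_0)\sqcup(\text{complement})$ then completes the proof.

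Overall the argument is essentially bookkeeping on top of the two preceding minor-arc lemmas; the only genuine ingredient is the Farey disjointness of the $\c{M}'$-family, which is precisely what the quantitative constraint $6d\theta_0<1$ in~\eqref{eq:10} is designed to secure.
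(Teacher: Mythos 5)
Your argument is correct and is precisely the intended one: the paper gives no written proof, deferring to Birch's Lemma 4.5, whose standard argument is exactly what you carry out — Farey disjointness of the widened arcs $\c{M}'_{a,q}(\theta_0)$ for large $P$ (secured by $6d\theta_0<1$), the inclusion $\c{M}(\theta_0)\subset\bigcup_{a,q}\c{M}'_{a,q}(\theta_0)$, and the observation that the leftover set lies in the complement of $\c{M}(\theta_0)$, where the Cauchy--Schwarz bound of Lemmas~\ref{lem:4.4} and~\ref{lem:4.4b} applies. Nothing is missing.
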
 

Before proceeding we 
note that
one can take an arbitrarily small positive value for $\theta_0$ in Lemma~\ref{lem:4.4c}
because
the system of inequalities~\eqref{eq:10} can be solved for any $\theta_0>0$ small enough.
This will come at the cost of a worse error term in Lemma~\ref{lem:4.4c},
however,
it will still exhibit a power saving and it will thus be acceptable for the purpose
of verifying Theorem~\ref{thm:main}.
\subsection{The intermediate range}
\label{s:inter}
Under
the assumptions of
Theorem~\ref{thm:main}
and~\eqref{eq:10}
we can use Lemmas \ref{lem:birch4..22..}, \ref{lem:birch5.1}
and the trivial bound 
$W(\alpha)\ll P^d$
to evaluate 
the quantity $S(\alpha)$
in Lemma~\ref{lem:4.4c}. This yields
\begin{equation}
\label{eq:ents}
\frac{\pi_f(P\c{B})}{P^n}
-
\sum_{q\leq P^{(d-1)\theta_0}}
q^{-n}
\sum_{\substack{ a\in \Z\cap [0,q)\\ \gcd(a,q)=1}}
S_{a,q}
\int_{|\gamma|\leq P^{\eta}} 
I(\c{B};\gamma)
\frac{\overline{W(a/q+\gamma P^{-d})}}{P^{d}}
\mathrm{d}\gamma
\ll
(\log P)^{-A}
,\end{equation}
valid for all $A>0$,
where $\eta$,
$S_{a,q}$ and $I(\c{B};\gamma)$
are defined respectively
in~(\ref{eta}), (\ref{saq})
and~(\ref{icg}).

For $A,q\in \N$ and $a \in \Z\cap [0,q) $ with $\gcd(a,q)=1$ 
we let 
\begin{equation}
\label{eq:entsla}
\mathfrak{M}_{a,q}(A):=
\{\alpha \in \R \md{1}: |\alpha-a/q|\leq P^{-d} (\log P)^A\}
,\end{equation}
\begin{equation}
\label{eq:entslala}
\mathfrak{M}(A):=\bigcup_{1\leq q\leq (\log P)^{A}}
\bigcup_{\substack{a \in \Z\cap [0,q) \\ \gcd(a,q)=1}}
\mathfrak{M}_{a,q}(A)
\end{equation}
and we observe that 
$\mathfrak{M}(A) \subset \c{M}'(\theta_0)$
for all $P\gg 1$. 
We denote the difference by 
\begin{equation}
\label{eq:entslalala}
\mathfrak{t}(A)
:=
\c{M}'(\theta_0)
\setminus
\mathfrak{M}(A)
.\end{equation}
The set 
$\mathfrak{t}(A)$
is therefore
to be thought of as
lying `between' the major arcs 
$\c{M}'(\theta_0)$
and the minor arcs
$[0,1)\setminus \c{M}'(\theta_0)$.
We shall see in~\S\ref{s:major}
that 
$\mathfrak{M}(A)$
gives
rise to the main term 
in Theorem~\ref{thm:main}.

Next,
we observe
that Lemma~\ref{lem:birch5.2}
and our assumption $n-\sigma_f\geq 1+2^{d-1}(d-1)$
yield 
\begin{equation}
\label{eq:bound}
\int_{|\gamma|\geq Q} 
|I(\c{B};\gamma)|
\mathrm{d}\gamma
\ll 
Q^{-\frac{1}{2^{d}(d-1)}}
\ , \quad 
(Q\geq 1),
\end{equation}
in particular showing that 
$\int_\R |I(\c{B};\gamma)|
\mathrm{d}\gamma
$ converges under assumption~\eqref{halfassumption}. 
\begin{lemma}
\label{lem:vinogradov}
If~\eqref{halfassumption} holds
then  
\begin{equation}
\label
{tonny iommi > beethoven} 
\sum_{(\log P)^{A}<q\leq P^{\eta}}
q^{-n}
\sum_{\substack{ a\in \Z\cap [0,q)\\ \gcd(a,q)=1}}
|S_{a,q}|
\int_{|\gamma|\leq P^{\eta}} 
|I(\c{B};\gamma)|
\frac{|W(a/q+\gamma P^{-d})|}{P^{d}}
\mathrm{d}\gamma
\ll
(\log P)^{-A/2+3+C'}  
,\end{equation}
where $C'$ is as in Proposition~\ref{deligne}.
\end{lemma}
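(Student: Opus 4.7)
The plan is to extract a uniform $L^\infty$-bound on the prime exponential sum $W$ at points near rationals $a/q$ with $q$ moderately large, and then to separate the three ingredients of the sum. First, I would observe that
\[
q^{-n}\sum_{\substack{a\in\Z\cap[0,q)\\ \gcd(a,q)=1}}|S_{a,q}|
\]
equals the function $T_f(q)$ studied in Proposition~\ref{deligne}. Thus it suffices to establish a bound of the form $(\log P)^{-A/2+3}$ for
\[
\int_{|\gamma|\leq P^{\eta}}|I(\c B;\gamma)|\cdot\frac{|W(a/q+\gamma P^{-d})|}{P^{d}}\,\mathrm{d}\gamma,
\]
uniformly over $q\in((\log P)^{A},P^{\eta}]$ and over $a$ coprime to $q$; summing over $q$ with the weight $T_f(q)$ and invoking Proposition~\ref{deligne}(2), which yields $\sum_{q\leq P^{\eta}}T_f(q)\ll(\log P)^{C'}$, will then produce the claimed exponent $-A/2+3+C'$.

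The pointwise bound on $W$ would come from Vaughan's identity. Given the Diophantine condition $\gcd(a,q)=1$, $(\log P)^{A}<q\leq P^{\eta}$ and $|\alpha-a/q|\leq P^{-d+\eta}$, one verifies $|\alpha-a/q|\leq q^{-2}$ provided $3\eta\leq d$; this may be arranged by taking $\theta_0$ sufficiently small in~\eqref{eq:10}, an option explicitly allowed by the remark following Lemma~\ref{lem:4.4c}. Vaughan's theorem applied to $\sum_{n\asymp P^{d}}\Lambda(n)\e(\alpha n)$ then yields
\[
\bigl(P^{d}q^{-1/2}+P^{4d/5}+(P^{d}q)^{1/2}\bigr)(\log P)^{3}\ll P^{d}(\log P)^{-A/2+3},
\]
since the lower bound $q>(\log P)^{A}$ makes the first term dominant. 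Removing the $\Lambda$-weight by partial summation against $1/\log n$, and discarding the $O(P^{d/2})$ contribution of proper prime powers, transfers this to $W(\alpha)$ without worsening the exponent.

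To finish, I would use~\eqref{eq:bound}, which together with hypothesis~\eqref{halfassumption} ensures $\int_{\R}|I(\c B;\gamma)|\,\mathrm{d}\gamma<\infty$. Factoring the uniform $W$-bound out of the $\gamma$-integral leaves precisely this convergent integral, so the inner integral over $\gamma$ is $O((\log P)^{-A/2+3})$; multiplying by $T_f(q)$, summing over $(\log P)^{A}<q\leq P^{\eta}$ and applying Proposition~\ref{deligne}(2) delivers the stated bound.

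The main technical obstacle is ensuring that Vaughan's $\Lambda$-weighted estimate transfers cleanly to the prime-counting sum $W$ in the dyadic range of~\eqref{s}, and that the pointwise bound is genuinely uniform across the $\gamma$-interval and across all reduced residues $a\bmod q$; both points are handled by standard supremum arguments and partial summation, but one must track the logarithmic losses carefully to keep the final exponent at $3$ rather than something larger.
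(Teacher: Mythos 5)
Your proposal is correct and follows essentially the same route as the paper: a pointwise bound $|W(\alpha)|\ll P^d(\log P)^{-A/2+3}$ via Vaughan's estimate (with the removal of the $\Lambda$-weight and of prime powers by partial summation), the absolute convergence of $\int_{\R}|I(\c B;\gamma)|\,\mathrm{d}\gamma$ from~\eqref{eq:bound}, and Proposition~\ref{deligne}(2) to sum $T_f(q)$ over $q\leq P^{\eta}$. The only (cosmetic) difference is that the paper invokes Dirichlet's approximation theorem to produce a rational approximation $a'/q'$ with $(\log P)^{A}<q'\leq P^{d-(d-1)\theta_0}$ and then applies Vaughan with $q'$, whereas you verify directly that the given $a/q$ satisfies $|\alpha-a/q|\leq q^{-2}$ (automatic since $\eta<1/6$ by~\eqref{eq:10}) and apply Vaughan with $q$ itself; both yield the same bound.
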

\begin{proof}
If
$\alpha$ is not in the union of the sets 
$\{\alpha \md{1}:|\alpha-a/q|\leq P^{-d+(d-1)\theta_0}
\}
$ taken over all $q\in \N\cap [1,(\log P)^{A}]$
and $a\in \Z\cap[0,q)$ with $\gcd(a,q)=1$,
then by Dirichlet's approximation theorem 
there
are coprime
integers $1\leq a'\leq q'$
with $q'\leq P^{d-(d-1)\theta_0}$
and $|\alpha-a'/q'|\leq P^{-d+(d-1)\theta_0}/q'$.
Thus
we must have 
$q'>(\log P)^{A}$. Alluding to Vaughan's estimate~\cite[\S 25]{Dav}
and using
partial summation
we obtain
\[
|W(\alpha)|\ll (P^dq'^{-1/2}+P^{4d/5}+(P^d q')^{1/2})(\log P)^3
\leq
 (P^d(\log P)^{-A/2}+P^{4d/5}+P^{d-\eta/2})(\log P)^3
,\]
which is $\ll P^d (\log P)^{-A/2+3}$.
For each $a$ and $q$ 
as
in~\eqref{tonny iommi > beethoven} 
 we get by~\eqref{eq:bound} that
\[
\int_{|\gamma|\leq P^{\eta}} 
|I(\c{B};\gamma)|
\frac{|W(a/q+\gamma P^{-d})|}{P^{d}}
\mathrm{d}\gamma
\ll
(\log P)^{-A/2+3}
,\] 
hence by
the second part of Proposition~\ref{deligne}
we see that
the sum over $q$ in the lemma is 
\[
\ll
\sum_{(\log P)^{A}<q\leq P^d}
\sum_{\substack{ a\in \Z\cap [0,q)\\ \gcd(a,q)=1}}
\frac{|S_{a,q}|}{q^n}
(\log P)^{-A/2+3}
\leq 
(\log P)^{-A/2+3+C'} 
.\qedhere\]
\end{proof}
\begin{lemma}
\label{lem:int565}
Assume~\eqref{halfassumption}.
Then we have
\[
\sum_{q\leq (\log P)^{A}}
q^{-n}
\sum_{\substack{ a\in \Z\cap [0,q)\\ \gcd(a,q)=1}}
|S_{a,q}|
\int_{(\log P)^{A}<|\gamma|\leq P^{\eta}} 
|I(\c{B};\gamma)|
\frac{|W(a/q+\gamma P^{-d})|}{P^{d}}
\mathrm{d}\gamma
\ll
\frac{(\log \log P)^{C'}}{
(\log P)^{\frac{A}{2^{d}(d-1)}}
}
.\]
\end{lemma}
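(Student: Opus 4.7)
The plan is to decouple the three pieces of the integrand using the trivial bound on $W$ and then apply the two estimates already recorded in the paper: the tail bound~\eqref{eq:bound} for $I(\mathcal{B};\gamma)$, and the averaged bound from part (2) of Proposition~\ref{deligne}.

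First I would bound the exponential sum over primes by its length, observing that for every real $\alpha$
\[
|W(\alpha)|\leq \#\{p\text{ prime}:\tfrac12\min\{f_0(\mathcal{B})\}P^d\leq p\leq 2\max\{f_0(\mathcal{B})\}P^d\}\ll P^d,
\]
so that $|W(a/q+\gamma P^{-d})|/P^d\ll 1$ uniformly in $a,q,\gamma$. The left-hand side of the lemma is therefore bounded by
\[
\Bigl(\sum_{q\leq(\log P)^{A}}T_f(q)\Bigr)\Bigl(\int_{|\gamma|>(\log P)^{A}}|I(\mathcal{B};\gamma)|\,\mathrm{d}\gamma\Bigr),
\]
with $T_f(q)=q^{-n}\sum_{a\in(\Z/q\Z)^{*}}|S_{a,q}|$ as in Proposition~\ref{deligne}.

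Next, I would control the $\gamma$-integral by applying~\eqref{eq:bound} with $Q=(\log P)^{A}$, which is legitimate because assumption~\eqref{halfassumption} is exactly what makes that estimate available. This gives
\[
\int_{|\gamma|>(\log P)^{A}}|I(\mathcal{B};\gamma)|\,\mathrm{d}\gamma\ll(\log P)^{-A/(2^{d}(d-1))}.
\]
For the arithmetic sum, I would invoke part (2) of Proposition~\ref{deligne} with $x=(\log P)^{A}$, obtaining
\[
\sum_{q\leq(\log P)^{A}}T_f(q)\ll\bigl(\log(\log P)^{A}\bigr)^{C'}\ll(\log\log P)^{C'},
\]
where the implicit constant is allowed to depend on $A$ and $C'$.

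Multiplying these two estimates gives exactly the bound claimed in the lemma. There is no real obstacle here: the only point deserving care is that dropping $W$ trivially is good enough precisely because the $\gamma$-integral already saves a power of $\log P$, so that no cancellation of $W$ on the intermediate range is needed; this is the reason for having isolated the set $\mathfrak{t}(A)$ in~\eqref{eq:entslalala} and for having arranged the threshold $(\log P)^A$ on $|\gamma|$.
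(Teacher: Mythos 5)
Your proposal is correct and is exactly the paper's argument: the trivial bound $W(\alpha)\ll P^d$, the tail estimate~\eqref{eq:bound} with $Q=(\log P)^A$, and the second part of Proposition~\ref{deligne} applied with $x=(\log P)^A$, multiplied together. Nothing further is needed.
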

\begin{proof}
The proof follows immediately by combining
the bound $W(\alpha)\ll P^d$, 
the inequality~\eqref{eq:bound} for $Q=(\log P)^{A}$
and the second part of Proposition~\ref{deligne}.
\end{proof}
Tying Lemmas~\ref{lem:vinogradov} and~\ref{lem:int565}
proves the following lemma.
\begin{lemma}
\label{lem:intermed}
Keep the assumptions
of
Theorem~\ref{thm:main}.
Then there exists a strictly
positive constant
$\lambda=\lambda(f)$
such that for every fixed
sufficiently large
$A>0$ we have 
\[
\Big|
\int_{\alpha \in \mathfrak{t}(A)}
S(\alpha)
\overline{W(\alpha)}
\da
\Big|
\ll
\frac{P^n}
{(\log P)^{A \lambda}}
.\]
\end{lemma}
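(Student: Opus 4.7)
The plan is to combine the pointwise approximation of $S(\alpha)$ provided by Lemma~\ref{lem:birch5.1} on the major arcs $\mathcal{M}'(\theta_0)$ with the two bounds already obtained in Lemmas~\ref{lem:vinogradov} and~\ref{lem:int565}. The key point is that $\mathfrak{t}(A)\subseteq \mathcal{M}'(\theta_0)$ for all $P$ large, so every $\alpha\in\mathfrak{t}(A)$ falls inside a unique arc $\mathcal{M}'_{a,q}(\theta_0)$ to which Lemma~\ref{lem:birch5.1} can be applied.

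First I would record that the arcs $\mathcal{M}'_{a,q}(\theta_0)$ are pairwise disjoint once $\theta_0$ is small enough. This is a standard Farey argument: for distinct fractions $a/q\neq a'/q'$ with denominators at most $P^{(d-1)\theta_0}$ one has $|a/q-a'/q'|\geq 1/(qq')\geq P^{-2(d-1)\theta_0}$, which exceeds twice the radius $P^{-d+(d-1)\theta_0}$ once $3(d-1)\theta_0<d$. The same estimate shows that when $q>(\log P)^A$ the arc $\mathcal{M}'_{a,q}(\theta_0)$ does not meet any $\mathfrak{M}_{a',q'}(A)$ with $q'\leq(\log P)^A$.

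Applying Lemma~\ref{lem:birch5.1} pointwise on each arc and using the trivial bound $|W(\alpha)|\leq P^d$ together with $|\mathcal{M}'(\theta_0)|\ll P^{-d+3(d-1)\theta_0}$, the error term $O(P^{n-1+2\eta})$ contributes at most $P^{n-1+5(d-1)\theta_0}$ to the integral over $\mathfrak{t}(A)$, which for $\theta_0$ sufficiently small is $o(P^n/(\log P)^A)$ and hence absorbed into the claimed bound. After the change of variable $\gamma=P^d(\alpha-a/q)$ on each arc, the remaining main term takes exactly the shape appearing in Lemmas~\ref{lem:vinogradov} and~\ref{lem:int565}, where the slice of $\mathfrak{t}(A)$ inside each arc is: the full range $|\gamma|\leq P^\eta$ when $q>(\log P)^A$, and the range $(\log P)^A<|\gamma|\leq P^\eta$ when $q\leq (\log P)^A$, by the Farey remark above and the very definitions \eqref{eq:entsla}, \eqref{eq:entslala}, \eqref{eq:entslalala}.

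Finally, taking absolute values term by term and invoking Lemma~\ref{lem:vinogradov} for the first family and Lemma~\ref{lem:int565} for the second, the total is bounded by
\[
P^n\Bigl[(\log P)^{-A/2+3+C'}+(\log\log P)^{C'}(\log P)^{-A/(2^d(d-1))}\Bigr],
\]
so the lemma follows with any $\lambda<\min\{1/2,\,1/(2^d(d-1))\}$ provided $A$ is taken sufficiently large in terms of $C'$ and $d$. The one delicate point is the disjointness and non-overlap verification in the second paragraph, since it determines how small $\theta_0$ must be chosen in \eqref{eq:10}; once this is in place, the rest is a bookkeeping exercise that simply reads off the two previous lemmas on the appropriate slices of $\mathfrak{t}(A)$.
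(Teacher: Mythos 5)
Your proposal is correct and follows essentially the same route as the paper: the paper's own derivation of \eqref{eq:ents} already applies Lemma~\ref{lem:birch5.1} arc by arc with the trivial bound $W(\alpha)\ll P^d$, and then Lemma~\ref{lem:intermed} is obtained exactly by reading off Lemma~\ref{lem:vinogradov} on the arcs with $q>(\log P)^A$ and Lemma~\ref{lem:int565} on the slices $(\log P)^A<|\gamma|\leq P^{\eta}$ of the remaining arcs. You merely make explicit the Farey disjointness check and the error-term bookkeeping that the paper leaves implicit, and your verification that $5(d-1)\theta_0<1$ under \eqref{eq:10} is consistent with the paper's remark that $\theta_0$ may be taken arbitrarily small.
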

\subsection{The major arcs}
\label{s:major}
Bringing together~\eqref{eq:ents}, \eqref{eq:entslalala},
and
Lemma~\ref{lem:intermed}
we see that under the assumptions of 
Theorem~\ref{thm:main}
there exists
$\lambda>0$ such that for all large $A>0$ we have 
\begin{equation}
\label{eq:entsbc}
\hspace{-0,1cm}
\frac{\pi_f(P\c{B})}{P^n}
-\hspace{-0,2cm}
\sum_{q\leq (\log P)^A}
\hspace{-0,2cm}
q^{-n}
\hspace{-0,2cm}
\sum_{\substack{ a\in \Z\cap [0,q)\\ \gcd(a,q)=1}}
\hspace{-0,1cm}
S_{a,q}
\int_{|\gamma|\leq (\log P)^A} 
\hspace{-0,1cm}
I(\c{B};\gamma)
\frac{\overline{W(a/q+\gamma P^{-d})}}{P^{d}}
\mathrm{d}\gamma
\ll
(\log P)^{-A\lambda }
.\end{equation}
Using 
the Siegel--Walfisz theorem as in~\cite[pg. 147]{Dav}
we can show 
that there exists $c=c(A)>0$
such that 
if
$|\beta|\leq P^{-d}(\log P)^{A}$,
$q\leq (\log P)^A$,
$a$ coprime to $q$
and 
$x\in [P^{d/2},P^{2d}]$
then   
\begin{equation} \label{eq:asdqwezxc}
\sum_{m\leq x} \Lambda(m)
\e(m(a/q+\beta))
=
\frac{\mu(q)}{\phi(q)}
\left(\int_{2}^x
\e(\beta t)
\mathrm{d}t
\right)
+O\left((1+|\beta| x) x\exp\left(-c \sqrt{\log P}\right)\right)
,\end{equation}
where $\mu,\phi$ and $\Lambda$ 
denote the 
M\"{o}bius, Euler and von Mangoldt functions.
We now see that 
\[
\sum_{p\leq x} (\log p)
\e(p(a/q+\beta))
=
\frac{\mu(q)}{\phi(q)}
\left(\int_{2}^x
\e(\beta t)
\mathrm{d}t
\right)
+O\left((1+|\beta| x) x\exp\left(-c \sqrt{\log P}\right)\right)
\]
due to 
the estimate 
$\sum_{m\leq x \atop m\neq p} \Lambda(m)\ll x^{1/2}$.
Partial summation 
shows that 
$W(a/q+\beta)$
equals
\[\frac{\mu(q)}{\phi(q)}
\!
\Bigg(
\!
\frac{\int_{2}^{2\max\{f_0(\c{B})\} P^d} \e(\beta t)\mathrm{d}t}{\log (\frac{1}{2}\max\{f_0(\c{B})\} P^d)}
-
\frac{\int_{2}^{\frac{1}{2}\min\{f_0(\c{B})\} P^d} \e(\beta t)\mathrm{d}t}{\log (\frac{1}{2}\min\{f_0(\c{B})\} P^d)}
-\int_{\frac{1}{2}\min\{f_0(\c{B})\} P^d}^{2\max\{f_0(\c{B})\} P^d}
\!
\!
\!
\Big(\int_{2}^u \e(\beta t)\mathrm{d}t\Big)
\!
\Big(\frac{1}{\log u}\Big)'
\!
\mathrm{d}u
\!
\Bigg) 
\]
up to an error of size
$
\ll
(1+|\beta| P^d) P^d\exp\left(-c \sqrt{\log P}\right)
$.
Partial integration now
yields 
   \begin{equation}    \label{vbdfhdfie}    
W(a/q+\gamma P^{-d})
=
\frac{\mu(q)}{\phi(q)}
\left(\int_{\frac{1}{2}\min\{f_0(\c{B})\} P^d}
^
{2\max\{f_0(\c{B})\} P^d}
\hspace{-0,2cm}
\frac{\e(\gamma P^{-d} t)\mathrm{d}t
}{\log t}
\right)
 +O\l(
 \frac{(1+|\gamma|)     P^{d} }{     {\exp\left(c \sqrt{\log P}\right)} }
 \r) .\end{equation}
 The error term makes the following contribution 
towards~\eqref{eq:entsbc},
\[
\ll
\exp\left(-c \sqrt{\log P}\right)
\sum_{q\leq (\log P)^A}
\hspace{-0,1cm}
q^{-n}
\hspace{-0,1cm}
\sum_{\substack{ a\in \Z\cap [0,q)\\ \gcd(a,q)=1}}
\hspace{-0,1cm}
|S_{a,q}|
\int_{|\gamma|\leq (\log P)^A} 
|I(\c{B};\gamma)| (1+(\log P)^A)
\mathrm{d}\gamma
\]
and, by the second part of 
Proposition~\ref{deligne}
this is 
$
\ll 
\exp\left(-c \sqrt{\log P}\right)
(\log P)^{A+1},$ which is obviously
$\ll
\exp\left(-c/2 \sqrt{\log P}\right)
$.
Hence, letting 
\[
\Xi_A(P):=
\sum_{q\leq (\log P)^A}
\frac{\mu(q)}{\phi(q)q^n}
\sum_{\substack{ a\in \Z\cap [0,q)\\ \gcd(a,q)=1}}
S_{a,q}
\] and \[ 
\Psi_A(P):=
\int_{|\gamma|\leq (\log P)^A} 
I(\c{B};\gamma)
\Bigg(\int_{\frac{1}{2}\min\{f_0(\c{B})\} P^d}
^
{2\max\{f_0(\c{B})\} P^d}
\frac{\e(-\gamma P^{-d} t)}{\log t}
\mathrm{d}t
\Bigg)
\mathrm{d}\gamma
,\]
we
obtain the following result via~\eqref{eq:entsbc}.
\begin{lemma}
\label{lem:finalnnn}
Under the assumptions of
Theorem~\ref{thm:main}
there exists $\lambda=\lambda(f)>0$ 
such that for every $A>0$
we have  
$
\pi_f(P\c{B})
=\Xi_A(P) 
\Psi_A(P)
P^{n-d}
+O(P^n
(\log P)^{-A\lambda})
$ for all sufficiently large $P.$
\end{lemma}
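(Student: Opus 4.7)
The plan is to start from the asymptotic~\eqref{eq:entsbc} and substitute into it the Siegel--Walfisz expansion~\eqref{vbdfhdfie} for $W(a/q+\gamma P^{-d})$. After this substitution the main contribution factorises cleanly as $P^{-d}\Xi_A(P)\Psi_A(P)$, and the remaining errors are absorbed into $O(P^n(\log P)^{-A\lambda})$ thanks to the sub-exponential savings coming from Siegel--Walfisz.

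First I would take complex conjugates in~\eqref{vbdfhdfie}, using that $\mu(q)/\phi(q)\in\RR$, to obtain
\[
\overline{W(a/q+\gamma P^{-d})}=\frac{\mu(q)}{\phi(q)}\int_{\frac12\min\{f_0(\c B)\}P^d}^{2\max\{f_0(\c B)\}P^d}\frac{\e(-\gamma P^{-d}t)}{\log t}\,\mathrm{d}t+O\!\left(\frac{(1+|\gamma|)P^d}{\exp(c\sqrt{\log P})}\right).
\]
Inserting the leading piece of this expression into the triple sum/integral in~\eqref{eq:entsbc} produces, after pulling the factor $\mu(q)/\phi(q)$ through the sum over $a$ and grouping terms, exactly
\[
P^{-d}\sum_{q\leq (\log P)^A}\frac{\mu(q)}{\phi(q)q^n}\sum_{\substack{a\in\Z\cap[0,q)\\\gcd(a,q)=1}}\!\!S_{a,q}\int_{|\gamma|\leq (\log P)^A}\!\!\!I(\c B;\gamma)\,\Bigl(\!\int_{\frac12\min\{f_0(\c B)\}P^d}^{2\max\{f_0(\c B)\}P^d}\!\!\!\!\frac{\e(-\gamma P^{-d}t)}{\log t}\,\mathrm{d}t\Bigr)\mathrm{d}\gamma,
\]
which by the very definitions of $\Xi_A(P)$ and $\Psi_A(P)$ equals $P^{-d}\Xi_A(P)\Psi_A(P)$. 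Multiplying through by $P^n$ gives the main term in the statement.

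Second, I would control the contribution to~\eqref{eq:entsbc} of the Siegel--Walfisz error. The trivial estimate $|I(\c B;\gamma)|\ll 1$ yields
\[
\int_{|\gamma|\leq (\log P)^A}(1+|\gamma|)|I(\c B;\gamma)|\,\mathrm{d}\gamma\ll (\log P)^{2A},
\]
while part~$(2)$ of Proposition~\ref{deligne} applied at the truncation $q\leq (\log P)^A$ gives
\[
\sum_{q\leq (\log P)^A}q^{-n}\sum_{\substack{a\in\Z\cap[0,q)\\\gcd(a,q)=1}}|S_{a,q}|\ll (\log\log P)^{C'}.
\]
Combining these estimates, the total contribution of the error term in the expansion of $\overline{W(a/q+\gamma P^{-d})}$ is bounded by
\[
\ll P^n(\log P)^{2A}(\log\log P)^{C'}\exp\bigl(-c\sqrt{\log P}\bigr),
\]
which decays faster than any fixed negative power of $\log P$ and is in particular $\ll P^n(\log P)^{-A\lambda}$ for all sufficiently large $P$.

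Combining the main term produced in the second paragraph with the built-in error $O(P^n(\log P)^{-A\lambda})$ of~\eqref{eq:entsbc} and with the Siegel--Walfisz error estimated just above yields Lemma~\ref{lem:finalnnn}, with the same constant $\lambda=\lambda(f)>0$ supplied by Lemma~\ref{lem:intermed}. The proof is then largely bookkeeping: no serious obstacle arises beyond verifying the applicability of~\eqref{vbdfhdfie} in the range $q\leq (\log P)^A$, $|\beta|\leq P^{-d}(\log P)^A$, and invoking Proposition~\ref{deligne}(2) to handle the divisor-type sum over $q$ uniformly.
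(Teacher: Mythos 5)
Your proposal is correct and follows essentially the same route as the paper: starting from~\eqref{eq:entsbc}, substituting the Siegel--Walfisz expansion~\eqref{vbdfhdfie}, recognising the main term as $P^{-d}\Xi_A(P)\Psi_A(P)$, and crushing the error with Proposition~\ref{deligne}(2) together with the bound on $\int|I(\c B;\gamma)|\,\mathrm{d}\gamma$ and the factor $\exp(-c\sqrt{\log P})$. The only cosmetic difference is that you bound $\int_{|\gamma|\leq(\log P)^A}(1+|\gamma|)|I(\c B;\gamma)|\,\mathrm{d}\gamma$ by $(\log P)^{2A}$ via the trivial estimate $|I|\ll1$, whereas the paper gets $(\log P)^{A}$ from the convergence of $\int_{\R}|I|$; both are harmless against the sub-exponential saving.
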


\subsection{The non-archimedean densities}
\label{s:nonarch}
If $n-\sigma_f\geq 3$ then
(\ref{Tfp}) along with the multiplicativity of $T_f$ \cite[\S 7]{birch} 
gives
\[\sum_{q>x}
\frac{|\mu(q)|}{\phi(q)q^n}
\sum_{
a\in (\Z/q\Z)^*
}
|S_{a,q}|
\leq 
\sum_{q>x}
\frac{|\mu(q)|}{\phi(q)}
q^{1-\frac{(n-\sigma_f)}{2}
+\epsilon}.
\]
Hence, for $q \in \N$,  the estimate
$
q/\phi(q)\ll    \log \log (4q)
$
that can be found for example in 
\cite[Th. 5.6]{MR3363366} implies
$$
\sum_{q>x}
\frac{|\mu(q)|}{\phi(q)q^n}
\sum_{
a\in (\Z/q\Z)^*
}
|S_{a,q}| \ll x^{-1/2+\varepsilon}.
$$
Therefore, we have
\[
\Xi_A(P)=
\sum_{q=1}^\infty
\frac{\mu(q)}{\phi(q)q^n}
\sum_{ 
a\in (\Z/q\Z)^* 
}
S_{a,q}
+O((\log P)^{-A/4})
.\]
The multiplicativity of the last sum over $a$
shows that the above
sum over $q$ is $\prod_p \beta_p$, where 
\[
\beta_p:=1-\frac{1}{(p-1)p^n}
\sum_{
a\in (\Z/p\Z)^*
}
S_{a,p}
.\]
Finally,
the 
following lemma is obtained by 
observing that 
\begin{equation}
\label{eq:observatory}
\sum_{
a\in (\Z/p\Z)^*
}
S_{a,p}
=
\sum_{\b{x}\in \F_p^n} 
\Big(-1+
\sum_{\substack{ a\in \Z/p\Z }} \e(af(\b{x})/p)
\Big)
=-p^n
+p\#\{\b{x}\in \F_p^n:f(\b{x})=0\} 
.\end{equation} 
\begin{lemma}
\label{lem:padicdenscr8}
If 
 $n-\sigma_f\geq 3$
then  
\[
\Xi_A(P)= \prod_p\Bigg(
\left(1-\frac{\#\{\b{x}\in \F_p^n:f(\b{x})=0\}}{p^n}\r)
\left(1-\frac{1}{p}\right)^{-1}
\Bigg)
+O((\log P)^{-A/4})
.\]
\end{lemma}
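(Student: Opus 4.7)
The plan is to proceed in three short steps: complete the sum over $q$ to an infinite series, factor the resulting series as an Euler product, and evaluate each local factor using the identity~\eqref{eq:observatory}.

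First I would use the tail bound proved just before the statement of the lemma. Setting $x=(\log P)^A$ in
\[
\sum_{q>x}\frac{|\mu(q)|}{\phi(q)q^n}\sum_{a\in(\Z/q\Z)^*}|S_{a,q}|\ll x^{-1/2+\varepsilon},
\]
and taking $\varepsilon$ small enough (say $\varepsilon<1/4$), the right side is $\ll (\log P)^{-A/4}$. Consequently,
\[
\Xi_A(P)=\sum_{q=1}^{\infty}\frac{\mu(q)}{\phi(q)q^n}\sum_{a\in(\Z/q\Z)^*}S_{a,q}+O\!\left((\log P)^{-A/4}\right).
\]
This tail estimate depends only on $n-\sigma_f\geq 3$, which is in force.

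Second, I would factor the infinite sum as an Euler product. By \cite[\S 7]{birch} the arithmetic function $q\mapsto q^{-n}\sum_{a\in(\Z/q\Z)^*}S_{a,q}$ is multiplicative, and $\mu/\phi$ is also multiplicative on square-free integers; since $\mu$ vanishes off the square-frees, only $q$ square-free contribute. Combined with the absolute convergence established by the tail bound, this yields
\[
\sum_{q=1}^{\infty}\frac{\mu(q)}{\phi(q)q^n}\sum_{a\in(\Z/q\Z)^*}S_{a,q}=\prod_{p}\beta_p,\qquad \beta_p:=1-\frac{1}{(p-1)p^n}\sum_{a\in(\Z/p\Z)^*}S_{a,p}.
\]

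Third, I would plug in the identity~\eqref{eq:observatory}, namely $\sum_{a\in(\Z/p\Z)^*}S_{a,p}=-p^n+p\#\{\b x\in\F_p^n:f(\b x)=0\}$, into the definition of $\beta_p$ and simplify:
\[
\beta_p=1+\frac{1}{p-1}-\frac{\#\{\b x\in\F_p^n:f(\b x)=0\}}{(p-1)p^{n-1}}=\frac{p}{p-1}\left(1-\frac{\#\{\b x\in\F_p^n:f(\b x)=0\}}{p^n}\right),
\]
which is exactly $\bigl(1-p^{-n}\#\{\b x\in\F_p^n:f(\b x)=0\}\bigr)(1-1/p)^{-1}$, the stated local factor.

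There is no real obstacle here: the quantitative tail bound does all the analytic work and the remaining steps are routine. The only minor point to verify is that multiplicativity plus the (absolute) tail bound legitimately produces the Euler product identity, which follows by comparing the partial sums over $q\leq x$ to the partial products over $p\leq x$ and letting $x\to\infty$.
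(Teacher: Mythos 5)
Your proposal is correct and follows essentially the same route as the paper: completing the sum over $q$ via the tail bound $\ll x^{-1/2+\varepsilon}$ with $x=(\log P)^A$, factoring the completed series as $\prod_p\beta_p$ by multiplicativity, and evaluating $\beta_p$ via the identity~\eqref{eq:observatory}. The algebraic simplification of $\beta_p$ to the stated local factor is also carried out correctly.
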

Combining~\eqref{Tfp}
and~\eqref{eq:observatory}
yields
$
p^{-n}\#\{
\b{x}\in \F_p^n:f(\b{x})=0\}
=
1/p
+O(p^{-(n-\sigma_f)/2})
$, thus verifying the following lemma.
\begin{lemma}
\label{lem:padicdens}
If $n-\sigma_f\geq 3$ then 
the product 
in Theorem~\ref{thm:main}
converges
absolutely.
\end{lemma}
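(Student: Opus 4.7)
The plan is to leverage the asymptotic
\[
p^{-n}\#\{\b{x}\in \F_p^n : f(\b{x})=0\} = \frac{1}{p} + O\l(p^{-(n-\sigma_f)/2}\r),
\]
which the paragraph immediately preceding the statement has just derived by combining~\eqref{Tfp} with the identity~\eqref{eq:observatory}. This is the only analytic input required.

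The first step is to insert this estimate into the generic local factor
\[
L_p := \frac{1 - p^{-n}\#\{\b{x}\in \F_p^n : f(\b{x})=0\}}{1-1/p},
\]
writing its numerator as $(1-1/p) + O(p^{-(n-\sigma_f)/2})$ and bounding $(1-1/p)^{-1}=O(1)$ uniformly over primes $p\geq 2$, so as to obtain $L_p = 1 + O\l(p^{-(n-\sigma_f)/2}\r)$.

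The second step is to invoke the hypothesis $n-\sigma_f \geq 3$: this forces the exponent to satisfy $(n-\sigma_f)/2 \geq 3/2 > 1$, so that $\sum_p p^{-(n-\sigma_f)/2}$ converges. The standard criterion asserting that $\prod_p(1+a_p)$ converges absolutely whenever $\sum_p |a_p| < \infty$ then delivers the lemma.

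I do not foresee any genuine obstacle. The entire substance is packaged into the asymptotic for the local point count, and the threshold $n-\sigma_f \geq 3$ is calibrated exactly so that the resulting tail sum is summable; no sharper information about the densities is needed.
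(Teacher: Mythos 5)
Your proposal is correct and follows the paper's own route exactly: the paper derives $p^{-n}\#\{\b{x}\in \F_p^n:f(\b{x})=0\}=1/p+O(p^{-(n-\sigma_f)/2})$ from \eqref{Tfp} and \eqref{eq:observatory} and concludes immediately, and your two steps simply make explicit the bound $L_p=1+O(p^{-(n-\sigma_f)/2})$ and the summability check $(n-\sigma_f)/2\geq 3/2>1$ that the paper leaves implicit.
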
 

\subsection{The archimedean densities}
\label{s:arch}

Letting for $P^d>\frac{1}{2}\min\{f_0(\c{B})\} $
\[
\Psi(P):=\int_{\gamma \in \R} 
I(\c{B};\gamma)
\Bigg(\int_{\frac{1}{2}\min\{f_0(\c{B})\} }
^
{2\max\{f_0(\c{B})\} }
\frac{\e(-\gamma  \mu)}{\log (\mu P^d)}
\mathrm{d}\mu
\Bigg)
\mathrm{d}\gamma
,\]
we see by~\eqref{eq:bound}
and our
assumption~\eqref{halfassumption}
that 
there exists
$\lambda_2=\lambda_2(f)>0$
such that 
\begin{equation}
\label{eq:imedfurn}
\Psi_A(P)
P^{-d}=
\Psi(P)
+O_A((\log P)^{-\lambda_2 A})
.\end{equation}
Now we observe that for all
reals 
$z,\mu$ with 
 $z>\mu>0$ and 
$z\notin \{1/\mu,1\}$
we have 
\begin{equation}
\label{eq:taylors}
\frac{1}{\log (\mu z)}
=\frac{1}{\log z}
\frac{1}{\big(1+\frac{\log \mu}{\log z}\big)}
=\frac{1}{\log z}
\sum_{k=0}^\infty \frac{(-1)^k}{(\log z)^{k}}
(\log \mu)^k
,\end{equation}
therefore, letting 
 for $k\in \Z_{\geq 0}$, 
\begin{equation}
\label{eq:lethgj}
J(k):=\int_{\gamma \in \R} 
I(\c{B};\gamma)
\Bigg(\int_{\frac{1}{2}\min\{f_0(\c{B})\} }
^
{2\max\{f_0(\c{B})\} }
\e(-\gamma  \mu) 
(\log \mu)^k
\mathrm{d}\mu
\Bigg)
\mathrm{d}\gamma
,\end{equation}
we infer 
that for all sufficiently large $P$ 
we have 
\begin{equation}
\label{eq:psikoln}
\Psi(P)=
\frac{1}{\log (P^d)}
\sum_{k=0}^\infty \frac{(-1)^k}{(\log (P^d))^{k}}
J(k)
.\end{equation} 
Let us furthermore introduce 
the 
following 
entity for all $n\in \N$
and $k \in \Z_{\geq 0}$,
\begin{equation}
\label{eq:kouradi}
J_n(k)
:=\int_{\gamma \in \R} 
\exp\l(-\frac{\pi^2\gamma^2}{n^2}\r)
I(\c{B};\gamma)
\Bigg(\int_{\frac{1}{2}\min\{f_0(\c{B})\} }
^
{2\max\{f_0(\c{B})\} }
\e(-\gamma  \mu) 
(\log \mu)^k
\mathrm{d}\mu
\Bigg)
\mathrm{d}\gamma
.\end{equation} 
\begin{lemma}
\label
{lem:four}
Under the assumption~\eqref{halfassumption}
we have
$\displaystyle\lim_{n\to+\infty} J_n(k)=J(k)$
for every $k \in \Z_{\geq 0}$.
\end{lemma}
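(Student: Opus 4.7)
The plan is to apply the dominated convergence theorem. For each $k\in\Z_{\geq 0}$, write the inner integral as
\[
K(\gamma):=\int_{\frac{1}{2}\min\{f_0(\c{B})\}}^{2\max\{f_0(\c{B})\}}\e(-\gamma\mu)(\log\mu)^k\,\mathrm{d}\mu,
\]
so that the integrands defining $J_n(k)$ and $J(k)$ are respectively
\[
F_n(\gamma):=\exp\!\bigl(-\pi^2\gamma^2/n^2\bigr)\,I(\c B;\gamma)\,K(\gamma)\quad\text{and}\quad F(\gamma):=I(\c B;\gamma)K(\gamma).
\]
For every $\gamma\in\R$ one has $\exp(-\pi^2\gamma^2/n^2)\to 1$ as $n\to+\infty$, hence $F_n(\gamma)\to F(\gamma)$ pointwise. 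Moreover $|F_n(\gamma)|\leq|F(\gamma)|$ uniformly in $n$, so it suffices to exhibit an integrable majorant for~$F$.

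First, since the integration in the definition of $K(\gamma)$ is over the compact interval $[\tfrac12\min\{f_0(\c B)\},2\max\{f_0(\c B)\}]$, on which $(\log\mu)^k$ is bounded, there is a constant $C=C(\c B,k)$ with $|K(\gamma)|\leq C$ for all $\gamma\in\R$. Next, Lemma~\ref{lem:birch5.2} gives
\[
|I(\c B;\gamma)|\ll\min\!\Bigl[1,|\gamma|^{-\frac{n-\sigma_f}{2^{d-1}(d-1)}+\epsilon}\Bigr].
\]
Under assumption~\eqref{halfassumption} we have $n-\sigma_f\geq 1+(d-1)2^{d-1}$, so
\[
\frac{n-\sigma_f}{2^{d-1}(d-1)}\geq 1+\frac{1}{2^{d-1}(d-1)},
\]
and choosing $\epsilon>0$ small enough the exponent exceeds $1+\tfrac{1}{2^{d}(d-1)}$, which suffices for integrability at infinity; meanwhile near $\gamma=0$ the factor $I(\c B;\gamma)$ is bounded by the $1$ in the minimum. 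Consequently $F(\gamma)$ is integrable on $\R$ (this is essentially~\eqref{eq:bound}).

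Having produced the integrable majorant $C\cdot|I(\c B;\gamma)|$ for the sequence $(F_n)$, the dominated convergence theorem yields
\[
\lim_{n\to+\infty}J_n(k)=\lim_{n\to+\infty}\int_{\R}F_n(\gamma)\,\mathrm{d}\gamma=\int_{\R}F(\gamma)\,\mathrm{d}\gamma=J(k),
\]
which is the desired statement. No genuine obstacle arises here; the one point requiring care is that the decay of $I(\c B;\gamma)$ at infinity is indeed strong enough to produce an integrable dominating function, and this is precisely where hypothesis~\eqref{halfassumption} enters through Lemma~\ref{lem:birch5.2}.
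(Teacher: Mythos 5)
Your proof is correct and rests on exactly the same two inputs as the paper's: the boundedness of $I(\c B;\gamma)$ from Lemma~\ref{lem:birch5.2} and its integrability at infinity via~\eqref{eq:bound}, together with the trivial uniform bound on the inner integral over the compact interval. The only difference is packaging: the paper bounds $J(k)-J_n(k)$ directly by splitting at $|\gamma|=\log n$ (which incidentally yields a rate of convergence), whereas you invoke the dominated convergence theorem; since the lemma only asserts the limit, your route is equally valid.
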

\begin{proof}  
We have $J(k)-J_n(k)\ll_k 
\int_{|\gamma|\leq \log n}    
\c H_\dagger
(\gamma) \mathrm{d}\gamma
+
\int_{|\gamma|> \log n}    
\c H_\dagger
(\gamma) \mathrm{d}\gamma
$, where 
\[
\c H_\dagger
(\gamma):=
\l(1-
\exp\l(-\frac{\pi^2\gamma^2}{n^2}\r)
\r)
|I(\c{B};\gamma)| 
.\] 
We have $I(\c{B};\gamma) \ll 1$ due to
Lemma~\ref{lem:birch5.2},
hence,
$$
\int_{|\gamma|\leq \log n}    
\c H_\dagger (\gamma) \mathrm{d}\gamma
\ll (\log n)
\l(1-
\exp\l(-\frac{\pi^2(\log n)^2}{n^2}\r)
\r)=o(1).$$
By~\eqref{eq:bound}
we get 
 $\int_{|\gamma| 
>
 \log n}    
\c H_\dagger
(\gamma) \mathrm{d}\gamma
\ll (\log n)^{-\lambda_1}=o(1)$
for some positive
$\lambda_1=\lambda_1(f)$.
\end{proof}

\begin{lemma}
\label{lem:prwinoskafes}
Under the assumption~\eqref{halfassumption}
we have the following for every
$k \in \Z_{\geq 0}$,
$$\lim_{n\to+\infty} J_n(k)=\int_{\b{t}\in \c{B}}
(\log f_0(\b{t}))^k
\mathrm{d}\b{t}.$$
\end{lemma}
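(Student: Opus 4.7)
The plan is to recognize the Gaussian factor $\exp(-\pi^2\gamma^2/n^2)$ as a regularization that converts $J_n(k)$ into a convolution with a mollifier, and then to pass to the limit.

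First, I would substitute the definition
$I(\c{B};\gamma)=\int_{\b{x}\in\c{B}}\e(\gamma f_0(\b{x}))\,\mathrm{d}\b{x}$
into $J_n(k)$ and apply Fubini to rewrite $J_n(k)$ as
\[
\int_{\b{x}\in\c{B}}\int_{\frac{1}{2}\min\{f_0(\c{B})\}}^{2\max\{f_0(\c{B})\}}(\log\mu)^k\Bigg(\int_{\R}\exp\Big(-\tfrac{\pi^2\gamma^2}{n^2}\Big)\e(\gamma(f_0(\b{x})-\mu))\,\mathrm{d}\gamma\Bigg)\mathrm{d}\mu\,\mathrm{d}\b{x}.
\]
Fubini is justified because the Gaussian decays rapidly in $\gamma$, the domain in $\b{x}$ and $\mu$ is bounded, and $(\log\mu)^k$ is bounded on a compact interval bounded away from $0$.

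Second, I would compute the inner $\gamma$-integral via the standard Gaussian Fourier transform identity: completing the square gives
\[
\int_{\R}\exp\Big(-\tfrac{\pi^2\gamma^2}{n^2}\Big)\e(\gamma s)\,\mathrm{d}\gamma=\frac{n}{\sqrt{\pi}}\exp(-n^2 s^2)=:\phi_n(s).
\]
The family $\{\phi_n\}_{n\geq 1}$ is a Gaussian approximation to the identity: $\int_\R \phi_n=1$ and, for every continuous compactly supported $g$ and every point of continuity $t$ of $g$, $(g*\phi_n)(t)\to g(t)$.

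Third, setting $g(\mu):=(\log\mu)^k\mathbf{1}_{[\frac{1}{2}\min\{f_0(\c{B})\},\,2\max\{f_0(\c{B})\}]}(\mu)$, the previous two steps give
\[
J_n(k)=\int_{\b{x}\in\c{B}}(g*\phi_n)(f_0(\b{x}))\,\mathrm{d}\b{x}.
\]
For every $\b{x}\in\c{B}$ the value $f_0(\b{x})$ lies in $[\min\{f_0(\c{B})\},\max\{f_0(\c{B})\}]$, which sits strictly inside the open interval $(\tfrac{1}{2}\min\{f_0(\c{B})\},2\max\{f_0(\c{B})\})$ where $g$ is continuous (in particular, no boundary points of the support of $g$ are attained). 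Hence
\[
(g*\phi_n)(f_0(\b{x}))\ \longrightarrow\ g(f_0(\b{x}))=(\log f_0(\b{x}))^k
\]
pointwise on $\c{B}$. Since $\|g\|_\infty<\infty$ and $\int \phi_n=1$, the mollified values are uniformly bounded by $\|g\|_\infty$, so dominated convergence (on the compact box $\c{B}$) yields the claimed identity.

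The proof is essentially routine Fourier analysis and there is no serious obstacle; the only delicate points are the justification of Fubini (handled by the Gaussian factor) and the observation that $f_0(\c{B})$ does not touch the discontinuities of $g$, which is built into the definition of $g$ by the choice of the enlarged interval $[\tfrac{1}{2}\min\{f_0(\c{B})\},2\max\{f_0(\c{B})\}]$.
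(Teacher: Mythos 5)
Your proposal is correct and follows essentially the same route as the paper: both compute the Gaussian Fourier integral to turn $J_n(k)$ into an integral over $\c{B}$ of a mollified version of $(\log\mu)^k$, observe that $f_0(\b{t})$ lies strictly inside the interval $\bigl(\tfrac{1}{2}\min\{f_0(\c{B})\},2\max\{f_0(\c{B})\}\bigr)$ so the approximate identity converges pointwise to $(\log f_0(\b{t}))^k$, and finish with dominated convergence. The only cosmetic difference is that you phrase the key step as convolution with a mollifier while the paper phrases it via Fourier inversion of $\phi_n$ and cites Gelfand--Shilov for the approximation-to-identity fact.
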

\begin{proof}
It is standard to see that the Fourier transform of 
the function $\phi_n:\R\to \R$
defined through 
$\phi_n(x):=\pi^{-1/2} n \exp(-n^2 x^2)$ 
satisfies 
$\widehat{\phi}_n(\gamma)=\exp(-\pi^2 n^{-2} \gamma^2 )$.
Therefore, the Fourier inverse formula yields
$\phi_n(x)=\int_\R  \e(x\gamma) \widehat{\phi}_n(\gamma)
\mathrm{d}\gamma$.
Using this for $x=f_0(\b{t})-y$
and rewriting~\eqref{eq:kouradi}
as
\[
\int_{\b{t}\in \c{B}}
\int_{\frac{1}{2}\min\{f_0(\c{B})\} }^{2\max\{f_0(\c{B})\} }
(\log \mu)^k
\Bigg(
\int_{\gamma \in \R} 
\exp\l(-\frac{\pi^2\gamma^2}{n^2}\r)
\e((f(\b{t})-\mu)\gamma) 
\mathrm{d}\gamma
\Bigg)
\mathrm{d}\mu
\mathrm{d}\b{t}
,\]
we infer that 
$J_n(k)=\int_\c{B}
g_n(\b{t})
\mathrm{d}\b{t}
$,
where
\[
g_n(\b{t}):=
\int_{\frac{1}{2}\min\{f_0(\c{B})\} }^{2\max\{f_0(\c{B})\} }
(\log \mu)^k
\phi_n(f(\b{t})-\mu)
\mathrm{d}\mu
.\]
It is obvious from \cite[Ex. I.2]{MR0435832}
that for any reals
$a<c<b$ 
and any continuous function $h:[a,b]\to \R$
one has 
\[
\lim_{n\to+\infty}
\int_a^b
h(\mu)
\phi_n(c-\mu)
\mathrm{d}\mu
=h(c). 
\]
Recalling that 
$f_0(\c{B})\subset (0,\infty)$
we infer that 
whenever
$\b{t} \in \c{B}$
then 
the following
inequality 
holds,
$
\frac{1}{2}\min\{f_0(\c{B})\} 
<
f_0(\b{t})
<
2\max\{f(\c{B})\}
$.
This gives   
$  
\lim_{n\to+\infty} 
  g_n(\b{t})
=(\log f_0(\b{t}))^k
$
and a use of the dominated convergence theorem 
concludes the proof of the lemma.
\end{proof}
\begin{lemma}
\label{lem:biskotio} 
Under the assumption~\eqref{halfassumption}
we have, for all sufficiently large $P$,
$\Psi(P)
=P^{-n}
\mathrm{Li}_f(P\c{B})
$.
\end{lemma}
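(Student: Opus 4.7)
The plan is to reduce both sides to the same series in $1/\log(P^d)$. First, combining \eqref{eq:psikoln} with Lemmas~\ref{lem:four} and~\ref{lem:prwinoskafes}, I infer that for all sufficiently large $P$,
\[
\Psi(P)=\frac{1}{\log(P^d)}\sum_{k=0}^{\infty}\frac{(-1)^k}{(\log(P^d))^k}\int_{\b{t}\in\c{B}}(\log f_0(\b{t}))^k\,\mathrm{d}\b{t}.
\]
Next I would tackle the right-hand side of the lemma. Performing the change of variables $\b{x}=P\b{t}$ in the definition of $\mathrm{Li}_f(P\c{B})$ and using the homogeneity of $f_0$ of degree $d=\deg(f)$, which gives $f_0(P\b{t})=P^d f_0(\b{t})$, I obtain
\[
P^{-n}\,\mathrm{Li}_f(P\c{B})=\int_{\b{t}\in\c{B}}\frac{\mathrm{d}\b{t}}{\log(P^d\,f_0(\b{t}))}.
\]

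The crux is then to apply the Taylor identity \eqref{eq:taylors} (with $\mu=f_0(\b{t})$ and $z=P^d$) pointwise in $\b{t}\in\c{B}$ to write
\[
\frac{1}{\log(P^d f_0(\b{t}))}=\frac{1}{\log(P^d)}\sum_{k=0}^{\infty}\frac{(-1)^k(\log f_0(\b{t}))^k}{(\log(P^d))^k},
\]
and then to swap sum and integral so as to recognise the resulting expression as $\Psi(P)$ via the display above. The main (and essentially only) obstacle is justifying this interchange, but it is mild: since $\c{B}$ is compact and $f_0(\c{B})\subset(0,\infty)$ by hypothesis, the quantity $M:=\sup_{\b{t}\in\c{B}}|\log f_0(\b{t})|$ is finite, so for all $P$ with $\log(P^d)>2M$ the $k$-th term of the series is bounded in absolute value by $M^k/(\log(P^d))^{k+1}\leq 2^{-k}/\log(P^d)$ uniformly in $\b{t}\in\c{B}$. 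Dominated convergence (or uniform convergence on the compact set $\c{B}$) then allows the termwise integration, giving
\[
P^{-n}\,\mathrm{Li}_f(P\c{B})=\frac{1}{\log(P^d)}\sum_{k=0}^{\infty}\frac{(-1)^k}{(\log(P^d))^k}\int_{\b{t}\in\c{B}}(\log f_0(\b{t}))^k\,\mathrm{d}\b{t},
\]
which is exactly the expression we obtained for $\Psi(P)$. This proves the claimed identity for all sufficiently large $P$, and in particular we do not need to invoke the error term coming from \eqref{eq:imedfurn}, since the equality is exact.
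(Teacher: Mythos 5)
Your proposal is correct and follows essentially the same route as the paper: both derive $J(k)=\int_{\c B}(\log f_0(\b t))^k\,\mathrm{d}\b t$ from Lemmas~\ref{lem:four} and~\ref{lem:prwinoskafes}, insert this into~\eqref{eq:psikoln}, interchange the sum over $k$ with the integral over $\b t$, and then identify the result with $P^{-n}\mathrm{Li}_f(P\c B)$ via~\eqref{eq:taylors} and the substitution $\b x=P\b t$. The only difference is cosmetic (you work from the $\mathrm{Li}_f$ side and spell out the dominated-convergence justification for the interchange, which the paper leaves implicit).
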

\begin{proof}
Combining Lemmas~\ref{lem:four}
and~\ref{lem:prwinoskafes}
we get 
$J(k)=\int_\c{B} (\log f_0(\b{t}))^k\mathrm{d}\b{t}$.
Injecting this into~\eqref{eq:psikoln}
and interchanging 
the sum over $k$ and the integral over $\b{t}$
yields 
\begin{equation} \label{eq:zombie ritual}
\Psi(P)=\int_\c{B} 
\Bigg(
\frac{1}{\log (P^d)}
\sum_{k=0}^\infty \frac{(-1)^k}{(\log (P^d))^{k}}
(\log f_0(\b{t}))^k
\Bigg)
\mathrm{d}\b{t}
.\end{equation}
The proof is concluded by
alluding to~\eqref{eq:taylors}
and making the
change of variables 
$\b{x}=P\b{t}$.
\end{proof}
Combining
Lemma~\ref{lem:biskotio}
with~\eqref{eq:imedfurn}
provides us with the following result.
\begin{lemma}
\label{lem:padicdensdu}
Under the assumptions of
Theorem~\ref{thm:main}
there exists $\lambda_2=\lambda_2(f)>0$ 
such that for every $A>0$ and every sufficiently large $P$
we have  
$$
\Psi_A(P)= \mathrm{Li}_f(P\c{B})
P^{-(n-d)}
+O_A(
P^{d}
(\log P)^{-A
\lambda_2}).
$$
\end{lemma}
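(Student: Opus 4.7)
The proof of Lemma~\ref{lem:padicdensdu} is essentially an algebraic rearrangement of two previously established results, so the plan is short. The first step is to invoke Lemma~\ref{lem:biskotio}, which provides the exact identity $\Psi(P) = P^{-n}\mathrm{Li}_f(P\c{B})$ for all sufficiently large $P$. This lemma has already done the heavy lifting of identifying the archimedean integral with the logarithmic integral, so we treat it as a black box.

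Next, I would multiply both sides by $P^d$ to obtain
\[
P^d\Psi(P) = P^{-(n-d)}\mathrm{Li}_f(P\c{B}).
\]
The second step is to combine this with the estimate~\eqref{eq:imedfurn}, which provides a constant $\lambda_2 = \lambda_2(f) > 0$ such that
\[
\Psi_A(P)P^{-d} = \Psi(P) + O_A\bigl((\log P)^{-\lambda_2 A}\bigr).
\]
Multiplying this identity by $P^d$ and substituting the expression for $P^d \Psi(P)$ from the first step immediately yields
\[
\Psi_A(P) = P^{-(n-d)}\mathrm{Li}_f(P\c{B}) + O_A\bigl(P^d(\log P)^{-\lambda_2 A}\bigr),
\]
which is precisely the claimed asymptotic.

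There is no real obstacle here: both inputs are established (the archimedean density identification in Lemma~\ref{lem:biskotio} and the truncation error bound~\eqref{eq:imedfurn}), and the conclusion is a direct consequence of combining them. The only minor point worth noting is that the exponent $\lambda_2$ produced by~\eqref{eq:imedfurn} is the same $\lambda_2$ as in the statement of the present lemma, and this is consistent because the error term in~\eqref{eq:imedfurn} was itself derived from~\eqref{eq:bound} together with assumption~\eqref{halfassumption}, so no new constraint on $n - \sigma_f$ is imposed at this stage.
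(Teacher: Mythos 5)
Your proposal is correct and is exactly the paper's argument: the paper proves this lemma by the one-line combination of Lemma~\ref{lem:biskotio} with~\eqref{eq:imedfurn}, which is precisely the rearrangement you carry out. No further comment is needed.
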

Our final result offers an
asymptotic
expansion of $\mathrm{Li}_f(P\c{B})$
in terms of $(\log P)^{-1}$.
\begin{lemma}
\label{lem:laurent}
For $f$ and $\c{B}$ as in Theorem~\ref{thm:main} and $P$ large enough
we have
\[
\mathrm{Li}_f(P\c{B})
=
\frac{\mathrm{vol}(\c{B})}{d}
\frac{P^n}{
\log P
}
+P^n\sum_{k=2}^\infty \frac{(-1)^{k-1}}{d^k}
\l(
\int_{\c{B}}
(\log f_0(\b{t}))^{k-1} \mathrm{d}\b{t}\r)
\frac{1}{(\log P )^k}
.\]
In particular, we have 
\[\mathrm{Li}_f(P\c{B})
=
\frac{\mathrm{vol}(\c{B})}{d}
\frac{P^n}{\log P }
+O_{f,\c B}
\l(\frac{P^n}{(\log P)^2
}\r)
.\]
\end{lemma}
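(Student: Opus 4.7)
The plan is to reduce this to a one-line change of variables followed by a geometric-series expansion. First I would substitute $\b{x} = P\b{t}$ in the defining integral $\mathrm{Li}_f(P\c{B}) = \int_{P\c{B}} \mathrm{d}\b{x}/\log f_0(\b{x})$, using that $f_0$ is homogeneous of degree $d$, so that $\log f_0(P\b{t}) = d\log P + \log f_0(\b{t})$. This gives
\[
\mathrm{Li}_f(P\c{B})
= P^n \int_{\c{B}} \frac{\mathrm{d}\b{t}}{d\log P + \log f_0(\b{t})}
= \frac{P^n}{d\log P} \int_{\c{B}} \frac{\mathrm{d}\b{t}}{1 + \frac{\log f_0(\b{t})}{d\log P}}.
\]

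Next, I would invoke the hypothesis $f_0(\c{B})\subset(0,\infty)$ together with compactness of $\c{B}$ and continuity of $f_0$ to see that $\log f_0$ is bounded on $\c{B}$, say $|\log f_0(\b t)|\leq M$ for all $\b t\in \c B$. Hence for all $P$ with $d\log P > 2M$, one has $|\log f_0(\b{t})/(d\log P)|\leq 1/2$ uniformly on $\c{B}$, and the Taylor expansion~\eqref{eq:taylors} applied with $z = P^d$ and $\mu = f_0(\b{t})$ yields
\[
\frac{1}{d\log P + \log f_0(\b{t})} = \frac{1}{d\log P}\sum_{k=0}^\infty \frac{(-1)^k}{(d\log P)^k}(\log f_0(\b{t}))^k,
\]
with the series converging uniformly in $\b{t}\in\c{B}$.

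I would then interchange the sum and integral, justified by the uniform convergence, to obtain
\[
\mathrm{Li}_f(P\c{B}) = P^n \sum_{k=0}^\infty \frac{(-1)^k}{(d\log P)^{k+1}}\int_{\c{B}} (\log f_0(\b{t}))^k\,\mathrm{d}\b{t}.
\]
Splitting off the $k=0$ term as the main contribution $\mathrm{vol}(\c{B})P^n/(d\log P)$ and reindexing $k\mapsto k-1$ in the remainder produces exactly the claimed expansion. For the ``in particular'' statement, I would bound the tail $\sum_{k\geq 2}$ by a geometric series with ratio $M/(d\log P)\leq 1/2$, whose sum is dominated by its first term, giving $O_{f,\c{B}}(P^n/(\log P)^2)$.

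No step here presents a genuine obstacle; the only delicate point is verifying that $\log f_0(\b{t})$ remains bounded on $\c{B}$ to legitimate both the uniform convergence of the series and the termwise integration, and this is immediate from the hypothesis $f_0(\c{B})\subset(0,\infty)$ together with continuity on the compact box.
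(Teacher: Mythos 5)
Your proof is correct, and it takes a genuinely more self-contained route than the paper's. The paper derives the first identity by combining the Fourier-analytic Lemmas~\ref{lem:four} and~\ref{lem:prwinoskafes} (which identify the coefficients $J(k)$ of the singular integral with $\int_{\c{B}}(\log f_0(\b{t}))^k\,\mathrm{d}\b{t}$) with the series~\eqref{eq:psikoln} and the expression~\eqref{eq:zombie ritual}, having already recognized that expression as $P^{-n}\mathrm{Li}_f(P\c{B})$ via~\eqref{eq:taylors} and the substitution $\b{x}=P\b{t}$ in Lemma~\ref{lem:biskotio}. You isolate the two elementary ingredients that actually drive the computation --- the substitution $\b{x}=P\b{t}$ combined with homogeneity of $f_0$ (so that $\log f_0(P\b{t})=d\log P+\log f_0(\b{t})$), and the geometric expansion~\eqref{eq:taylors} --- and apply them directly to the integral defining $\mathrm{Li}_f(P\c{B})$, so the Gaussian-mollifier and dominated-convergence apparatus is never invoked. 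What this buys is a proof of Lemma~\ref{lem:laurent} that is pure calculus and independent of the circle-method analysis; what the paper's route buys is that the expansion falls out of identities ($J(k)=\int_{\c{B}}(\log f_0)^k$, $\Psi(P)=P^{-n}\mathrm{Li}_f(P\c{B})$) it needs anyway for the major-arc evaluation. Your justification of the termwise integration (uniform convergence from $|\log f_0(\b{t})|\leq M$ on the compact box once $d\log P>2M$) and your tail bound for the second assertion are both sound and match the paper's own geometric-series estimate.
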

\begin{proof}

The first equality follows by combining Lemmas~\ref{lem:four} and \ref{lem:prwinoskafes} with~\eqref{eq:psikoln} and \eqref{eq:zombie ritual}.
To prove the second,
note that if $\log P>2$ then 
\[\sum_{k=2}^\infty \frac{(-1)^{k-1}}{d^k}
\l(
\int_{\c{B}}
(\log f_0(\b{t}))^{k-1} \mathrm{d}\b{t}\r)
\frac{1}{(\log P)^k}
\ll \sum_{k=2}^\infty \frac{1}{(\log P)^k}
<
\frac{1}{(\log P)^2}
\sum_{k=2}^\infty \frac{1}{2^{k-2}}
,\] thus concluding the proof.
\end{proof}

\subsection{The proof of Theorem~\ref{thm:main}}
\label{theproof}
It
follows
by
merging
Lemmas~\ref{lem:finalnnn}, \ref{lem:padicdenscr8}
and~\ref{lem:padicdensdu}.
\qed

\section{The proof of Theorem~\ref{thm:second}}
\label{s:abc56}
\subsection
{First steps and auxiliary estimates} 
Similarly as in~\S\ref{s:utilising}
we may write  
\[
\#\{\b{S}_f\cap P\c{B}\}
=
\int_0^1
S(\alpha)
\overline{Q(\alpha)}
\mathrm{d}\alpha
,\]
where  $S(\alpha)$ is defined 
in~\eqref{s}
and  
\[ 
Q(\alpha):=
\sum_{\substack{
m \text{ square-free}, m\neq 0
\\
 \min\{f_0(\c{B})\} -1 \leq m P^{-d} \leq   \max\{f_0(\c{B})\}  +1
}}
\e(\alpha m)
.\] 

We shall later need certain estimates concerning exponential sums taking values over square-free integers that we record here. For $\alpha \in \R$ and $N\in \R_{\geq 1}$ define 
\[
f_2(\alpha,N):=\sum_{1\leq n \leq N} \mu(n)^2 \e(\alpha n)
.\]
The following result is the very 
special case corresponding to the choices
$k=2$ and $p=3/2$
in the work of Keil~\cite{MR3043604}.
\begin{lemma}
[Keil~{\cite[Th. $1.2$]{MR3043604}}] 
\label{lem:eugke} 
We have
\[\int_0^1 
|f_2(\alpha,N)|^{3/2} \mathrm{d}\alpha
\ll N^{1/2}(\log N)^2
.\]
\end{lemma}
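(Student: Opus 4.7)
The plan is to combine the squarefree-detection identity $\mu(n)^2 = \sum_{d^2\mid n} \mu(d)$ with a Hardy--Littlewood dissection of $[0,1]$ into major and minor arcs. Writing
\[
f_2(\alpha,N) = \sum_{d\leq \sqrt{N}}\mu(d)\,U_d(\alpha),\qquad U_d(\alpha):=\sum_{m\leq N/d^2}\e(\alpha d^2 m),
\]
one has the trivial pointwise bound $|U_d(\alpha)|\ll \min(N/d^2,\|d^2\alpha\|^{-1})$ together with the Parseval identity $\int_0^1 |f_2(\alpha,N)|^2\,\d\alpha = \sum_{n\leq N}\mu(n)^2\ll N$; these are the two crude inputs feeding the circle method.

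On the major arcs of level $Q$, that is, neighbourhoods of $a/q$ with $q\leq Q$ and $|\alpha-a/q|\leq 1/(qN)$, I would compute $f_2(a/q+\beta,N)$ by grouping the contributions of the $U_d(\alpha)$ according to $\gcd(d,q)$. After standard manipulations with Ramanujan-type sums and multiplicativity, one obtains an approximation of the form $f_2(a/q+\beta,N) \approx C(q)\,T(N\beta)$, where $C(q)$ is a multiplicative function satisfying $\sum_q \phi(q)|C(q)|^{3/2}\ll (\log N)^{c}$ for some fixed $c>0$ and $T$ is a kernel obeying $|T(x)|\ll\min(1,|x|^{-1})$. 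Integrating $|C(q)T(N\beta)|^{3/2}$ over $|\beta|\leq 1/(qN)$ and then summing over $a$ coprime to $q$ and over $q\leq Q$ yields a major-arc contribution of the required order $N^{1/2}(\log N)^{c}$.

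On the complementary minor arcs $\mathfrak{m}$, the plan is to combine the Parseval bound with a pointwise estimate of the form $|f_2(\alpha,N)|\ll N\,q^{-\delta}$ whenever $\alpha$ is close to $a/q$ with $q>Q$, extracted by applying Vaughan's identity to the Möbius sum inside the decomposition of $f_2$. Writing $V$ for the resulting uniform bound on $\mathfrak{m}$, a Hölder-type splitting of $|f_2|^{3/2}$ using level sets $\{|f_2|\geq V_0\}$ for a threshold $V_0$ balanced against the $L^2$ mass allows one to match the required $N^{1/2}(\log N)^{2}$ ceiling.

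The main obstacle is to beat the bound $N^{3/4}$ produced by the naive triangle inequality $\|\sum_d\mu(d)U_d\|_{3/2}\leq\sum_d\|U_d\|_{3/2}$ and, equivalently, by the Hölder interpolation $\|f_2\|_{3/2}^{3/2}\leq\|f_2\|_1^{1/2}\|f_2\|_2$ which at best delivers $N^{3/4}$ since $\|f_2\|_1\asymp N^{1/2}$. The sharper $N^{1/2}$ ceiling matches exactly the contribution of the single peak of $|f_2|$ at $\alpha=0$ of height $\asymp N$ and width $\asymp 1/N$, so any successful argument must genuinely exploit the cancellation in the Möbius sum together with the arithmetic structure revealed by the major-arc asymptotic; this is the crux of Keil's argument.
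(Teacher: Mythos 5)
The paper does not actually prove this lemma: it is imported verbatim from Keil's paper (the case $k=2$, $p=3/2$ of his Theorem~1.2), so there is no internal argument to measure yours against. Judged on its own terms, your major-arc discussion is plausible in outline (modulo a normalisation: the main term should be of size $N\,C(q)\,T(N\beta)$ rather than $C(q)T(N\beta)$, and the error term $O(q\sqrt{N})$ per arc forces the level $Q$ to be a small power of $N$), but the minor-arc step contains a genuine gap.

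The gap is this: a supremum bound on the minor arcs combined with Parseval can never give anything below $N^{3/4}$ for the $L^{3/2}$-moment, i.e.\ it cannot beat the very barrier you correctly identify in your final paragraph. If you split by the level set $\{|f_2|\geq V_0\}$, the portion where $|f_2|\leq V_0$ contributes at most $V_0^{3/2}\cdot\mathrm{meas}\leq V_0^{3/2}$, while the portion where $|f_2|>V_0$ contributes at most $V_0^{-1/2}\int_0^1|f_2|^2\ll N V_0^{-1/2}$; optimising gives $V_0=N^{1/2}$ and the bound $N^{3/4}$, \emph{regardless} of how strong the pointwise bound $Nq^{-\delta}$ is, because for exponents $p<2$ a sup bound only helps through the measure of the arcs and interpolation with $L^2$ goes the wrong way. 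What is actually needed --- and what constitutes the substance of Keil's theorem --- is a large-values estimate: a bound for $\mathrm{meas}\{\alpha:|f_2(\alpha,N)|>V\}$ of roughly the quality $V^{-1}(\log N)^{O(1)}$, rather than the Chebyshev--Parseval bound $NV^{-2}$, in the range $N^{1/2}\ll V\ll N$. This is obtained by showing that any $\alpha$ with $|f_2(\alpha)|>V$ must lie in an arc $|\alpha-a/q|\ll 1/(q^2V)$ with $q\ll (N/V)^{1/2}$ and summing the measures of these arcs; inserting such a bound into $\int_0^1|f_2|^{3/2}\,\mathrm{d}\alpha=\tfrac{3}{2}\int_0^{\infty}V^{1/2}\,\mathrm{meas}\{|f_2|>V\}\,\mathrm{d}V$ then yields $N^{1/2}$ up to logarithms. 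Without this ingredient (or the Balog--Ruzsa bound $\|f_2\|_1\asymp N^{1/4}$, which you do not invoke and which in any case only yields $N^{5/8}$ by interpolation), your argument does not close. A secondary issue: Vaughan's identity has no standard application to the outer M\"obius factor in $\sum_{d\leq\sqrt{N}}\mu(d)U_d(\alpha)$; the usual pointwise bounds here come from $|U_d(\alpha)|\leq\min(N/d^2,\|d^2\alpha\|^{-1})$ and elementary counting, with no cancellation extracted from $\mu(d)$.
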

For  $p$  prime  
and $\ell,m$  non-negative integers
such that $m \leq \ell$,   define   $g(p^{\ell},p^m)$ by
\[p^{\ell}(1-p^{-2})g(p^{\ell},p^m)=\begin{cases} 
0, &\mbox{if } \ell \geq m\geq 2  , \\ 
1, &\mbox{if } m <\min\{2,\ell\}, \\  
1-p^{\ell-2},  & \mbox{if } \ell=m\leq 1.
\end{cases} 
\]
We extend this definition  by defining the following whenever $d,q\in \N$ are such that $d\mid q$,
\[g(q,d):=\prod_{p\mid q}
g(p^{\nu_p(q)},  
p^{\nu_p(d)} 
)
.\] 
We can now introduce the following entity for 
 $q\in \N$, 
\begin{equation}
\label{eq:defgg}
G(q):=\sum_{b=1}^q
\e(b/q)
g(q,\gcd(b,q))
.\end{equation}

Br\"udern, Granville, Perelli, Vaughan and Wooley
studied $Q(\alpha)$
in~\cite{MR1620828}.
\begin{lemma}
\label{lem:MR1620828} 
There exist absolute positive constants $\delta_1,\delta_2$
such that for all $q\in \N$ with $q\leq P^{\delta_1}$,
all $a\in \Z\cap [1,q)$, $d\in \N$,
$\gamma \in \R$
with $|\gamma| \leq P^{\delta_1}$
and all $c_1<c_2\in \R$ 
we have  
\[
\sum_{\substack{m \text{ square-free}, m\neq 0
\\ c_1\leq mP^{-d} \leq c_2}}
\e(m(a/q+\gamma P^{-d})
)= 
\frac{G(q)}{\zeta(2)}
\left(\int_{c_1 P^d}
^
{c_2 P^d}
\e(\gamma P^{-d} t) 
\mathrm{d}t
\right)
+O_{c_1,c_2}\left((1+|\gamma|) P^{d-\delta_2}\right)
,\]
where $\zeta$ denotes the Riemann
zeta function and the implied constant depends at most on $c_1$ and $c_2$.
\end{lemma}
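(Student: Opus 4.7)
The idea is to decompose the sum into residue classes modulo $q$, use the classical asymptotic for the density of square-free integers in an arithmetic progression, and extract the Fourier-type integral via Abel summation; the coefficient $G(q)$ then emerges as the natural linear combination of local densities.

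I would proceed in four steps. First, decompose according to $b = m \bmod q$:
\[
\sum_{\substack{m \text{ square-free}, m \neq 0 \\ c_1 \leq mP^{-d}\leq c_2}} \e\bigl(m(a/q + \gamma P^{-d})\bigr)
= \sum_{b=1}^{q} \e(ab/q) \, T_b(\gamma),
\]
where $T_b(\gamma)$ denotes the same exponential sum restricted to $m \equiv b \pmod q$. Second, invoke the classical asymptotic: writing
$N_{b,q}(t) := \#\{m \leq t : m \equiv b \pmod q,\ \mu^2(m) = 1\}$,
one has uniformly in $b$ and $q$ the formula
\[
N_{b,q}(t) = \frac{g(q, \gcd(b,q))}{\zeta(2)} \, t + O\bigl(t^{1/2} q^{\epsilon}\bigr),
\]
proved by applying $\mu^2(m) = \sum_{k^2 \mid m} \mu(k)$, substituting $m = k^2 \ell$, splitting the range of $k$ at $\sqrt{t}$, and bounding the contribution of $k > \sqrt{t}$ trivially. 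Third, perform Abel summation of $\e(\gamma P^{-d} m)$ against the measure $\mathrm{d}N_{b,q}$ over the interval $[c_1P^d, c_2P^d]$; the main term is recognised immediately, and the error is controlled by the sup-norm of the remainder plus $|\gamma|/P^{d}$ times its $L^1$-norm on the interval, yielding
\[
T_b(\gamma) = \frac{g(q, \gcd(b,q))}{\zeta(2)} \int_{c_1 P^d}^{c_2 P^d} \e(\gamma P^{-d} t) \, \mathrm{d}t + O\bigl((1+|\gamma|) P^{d/2} q^{\epsilon}\bigr).
\]

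The fourth step is to sum over $b$ and identify the resulting constant. One checks that, for every $a$ with $\gcd(a,q) = 1$,
$\sum_{b=1}^{q} \e(ab/q)\, g(q, \gcd(b,q)) = G(q)$;
by multiplicativity of $g(\cdot,\cdot)$ this factorisation reduces to a prime-power verification, and a direct case-check against the three cases in the definition of $g(p^\ell, p^m)$ confirms both the equality and the independence from $a$. Assembling, the cumulative error is at most $q \cdot (1+|\gamma|) P^{d/2} q^{\epsilon}$, which is $O_{c_1,c_2}\bigl((1+|\gamma|) P^{d-\delta_2}\bigr)$ provided $q \leq P^{\delta_1}$ and $|\gamma| \leq P^{\delta_1}$ with $\delta_1, \delta_2 > 0$ chosen so that $2\delta_1 + 2\delta_2 < d$; the constraint $|\gamma| \leq P^{\delta_1}$ enters only through the partial summation error.

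The main technical obstacle is the uniform-in-$q$ asymptotic for square-free integers in an arithmetic progression with a power-saving remainder; this is the classical theorem of Prachar--Hooley type and is the core quantitative input of~\cite{MR1620828}, which is the reference invoked. The remaining bookkeeping---verifying the multiplicative identity $\sum_b \e(ab/q) g(q,\gcd(b,q)) = G(q)$ prime-by-prime, and propagating the error through partial summation in a way that keeps the dependence on $\gamma$ linear---is routine but finicky; once the AP estimate is in place, the rest of the proof is standard.
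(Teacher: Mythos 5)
Your proposal is correct and follows essentially the same route as the paper: decompose into residue classes modulo $q$, count square-free integers in each class via $\mu^2(m)=\sum_{k^2\mid m}\mu(k)$ to produce the local densities $g(q,\gcd(b,q))$ with a power-saving error, recover $G(q)$ as the exponential average $\sum_b\e(ab/q)g(q,\gcd(b,q))$ (the independence from $a$, which you rightly flag, is used implicitly in the paper as well), and insert the factor $\e(\gamma P^{-d}m)$ by partial summation, which yields the $(1+|\gamma|)$ factor in the error. The only cosmetic difference is that you perform Abel summation separately in each residue class, whereas the paper first sums over $b$ to form $Z(x;q,a)$ and then applies partial summation once; the error bookkeeping is identical.
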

\begin{proof}
We will show that there exists an absolute 
$\delta>0$ 
such that if
$|\beta|\leq P^{-d+\delta}$,
$q\leq P^{\delta }$,
$a$ is 
coprime to $q$
and 
$x\in [P^{d/2},P^{2d}]$
then  
\begin{equation}
\label
{eq:giveproof}
 \sum_{1\leq m\leq x} \mu(m)^2
\e(m(a/q+\beta))
=
\frac{G(q)}{\zeta(2)}
\left(\int_{1}^x
\e(\beta t)
\mathrm{d}t
\right)
+O\left((1+|\beta| x) x^{1-\delta }
\right)
,\end{equation}
 from which one can  deduce the asymptotic stated in the lemma 
in the same way as we deduced~\eqref{vbdfhdfie}
from~\eqref{eq:asdqwezxc}.
To prove~\eqref{eq:giveproof}
we first note that for all  $b$ and $q\in \N$ 
we have 
\begin{equation}
\label
{eq:defggg}
\sum_{\substack{ 
1\leq m \leq x
\\
m\equiv b \md{q}
\\
m \text{ square-free}
}}1
=\sum_{\substack{ 
1\leq m \leq x
\\
m\equiv b \md{q}
}}\sum_{d^2 \mid m } \mu(d)
=
\sum_{\substack{1\leq d \leq \sqrt{x}  \\ \gcd(q,d^2)\mid b } }\mu(d) 
\l(\frac{x\gcd(q,d^2)}{q d^2 }+O\l(1\r)\r)
\end{equation}
and completing the sum over $d$ 
gives  
$
\frac{x}{\zeta(2)} 
g(q,\gcd(b,q))
+
O(\sqrt{x})
$.
For $\gcd(a,q)=1$ we let  
\[ 
Z(x;q,a)
:=
\sum_{1\leq m\leq x} \mu(m)^2
\e(m a/q )
=\sum_{b=1}^q
\e( b a /q )  
\sum_{\substack{ 
1\leq m \leq x
\\
m \equiv b \md{q}  }}\mu(m)^2
\]
and use~\eqref{eq:defgg} and~\eqref{eq:defggg}
to get 
the following estimate 
with an absolute implied constant
for 
 $q\leq x$,
\begin{equation}\label
{eq:sigwalf2}
Z(x;q,a)
-
 \frac{G(q) x}{\zeta(2)} 
\ll
q\sqrt{x}
.\end{equation}
We
therefore obtain by partial summation that 
\[ 
\sum_{1\leq m\leq x} \mu(m)^2
\e(m(a/q+\beta))
=\e(x \beta)
Z(x;q,a)
-2 \pi i \beta 
\int_1^x  
\e(u \beta)
Z(u ;q,a)\mathrm{d} u 
\]
and by~\eqref{eq:sigwalf2}
this becomes 
\[
\frac{G(q)}{\zeta(2)}
\l(\int_1^x \e(\beta t )\mathrm{d}t\r)
+
 O\l( (1+|\beta| x)  q \sqrt{x}  \r )
,\] with an absolute implied constant. 
This proves~\eqref{eq:giveproof}
with $\delta=(2+d)^{-1}$.
Indeed, if $q\leq P^\delta$ then 
the equality 
$P^{\delta}=P^{\frac{d}{2} (\frac{1}{2}-\delta)}$ and the bound
$P^{\frac{d}{2}}\leq x$
yield
$q\leq x^{\frac{1}{2}-\delta}
$, 
i.e.  
$q\sqrt{x} \leq x^{1-\delta}$.
\end{proof}
Finally, the next result is shown in the proof of~\cite[Lem. 3.1]{MR1620828}. 
\begin{lemma}
[Br\"udern, Granville, Perelli, Vaughan and Wooley, {\cite[Lem. 3.1]{MR1620828}}] 
\label{lem:MR1620828b} The function $G$ is multiplicative, 
supported in cube-free integers and satisfies for all prime $p$ the identity
\[
G(p)=G(p^2)=-p^{-2}(1-p^{-2})^{-1}
.\] 
\end{lemma}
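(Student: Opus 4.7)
The plan is to decompose the computation into two independent ingredients: first establish multiplicativity, then reduce $G$ at prime powers to a Ramanujan-type sum and read off the values from the piecewise definition of~$g$.

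For multiplicativity, suppose $q=q_1 q_2$ with $\gcd(q_1,q_2)=1$. The Chinese Remainder Theorem gives a bijection between residues $b\bmod q$ and pairs $(b_1,b_2)$ with $b_i\bmod q_i$ under $b\equiv b_i\bmod q_i$, and one has the standard factorisation $\e(b/q)=\e(b_1/q_1)\e(b_2/q_2)$ together with $\gcd(b,q)=\gcd(b_1,q_1)\gcd(b_2,q_2)$. Since the definition of $g(q,d)=\prod_{p\mid q} g(p^{\nu_p(q)},p^{\nu_p(d)})$ is multiplicative over the prime factorisation, the sum defining $G(q)$ factors as $G(q_1)G(q_2)$.

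The second and main step is to evaluate $G(p^\ell)$. Grouping the inner sum by the value of $d=\gcd(b,p^\ell)$ and writing $b=db'$ with $\gcd(b',p^\ell/d)=1$, the resulting inner sum over $b'$ is the Ramanujan sum $c_{p^\ell/d}(1)=\mu(p^\ell/d)$. Hence
\[
G(p^\ell)=\sum_{m=0}^{\ell} g(p^\ell,p^m)\,\mu(p^{\ell-m}).
\]
Because $\mu(p^k)=0$ for $k\geq 2$, only the terms $m=\ell$ and $m=\ell-1$ survive, giving
\[
G(p^\ell)=g(p^\ell,p^\ell)-g(p^\ell,p^{\ell-1}).
\]
For $\ell\geq 3$ both surviving $m$-values satisfy $m\geq 2$, so the first case of the piecewise definition of $g$ forces $g(p^\ell,p^\ell)=g(p^\ell,p^{\ell-1})=0$, giving $G(p^\ell)=0$; this is the cube-free support. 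For $\ell=2$, the term $g(p^2,p^2)$ vanishes while $g(p^2,p)=1/(p^2-1)$ from the second case, yielding $G(p^2)=-1/(p^2-1)$. For $\ell=1$, the third case gives $g(p,p)=1/(p+1)$ and the second case gives $g(p,1)=p/(p^2-1)$, so $G(p)=-1/(p^2-1)$. Both values coincide with $-p^{-2}(1-p^{-2})^{-1}$, completing the proof.

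The only mildly delicate point is bookkeeping the three cases of the piecewise definition of $g$ when $\ell\in\{1,2\}$ and confirming that the $\mu$-vanishing for $\ell\geq 3$ kills exactly the terms where $g$ itself is nonzero; everything else is formal.
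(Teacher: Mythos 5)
The paper itself offers no proof of this lemma — it is imported verbatim from the proof of Lemma~3.1 of Br\"udern--Granville--Perelli--Vaughan--Wooley — so your self-contained verification is a genuine addition rather than a variant of an argument in the text. The computation is correct: grouping by $\gcd(b,p^{\ell})=p^{m}$ and evaluating the inner sum as the Ramanujan sum $c_{p^{\ell-m}}(1)=\mu(p^{\ell-m})$ reduces $G(p^{\ell})$ to $g(p^{\ell},p^{\ell})-g(p^{\ell},p^{\ell-1})$, and your case evaluations $g(p,p)=1/(p+1)$, $g(p,1)=p/(p^{2}-1)$, $g(p^{2},p)=1/(p^{2}-1)$, $g(p^{2},p^{2})=0$ all check against the piecewise definition, giving $G(p)=G(p^{2})=-1/(p^{2}-1)=-p^{-2}(1-p^{-2})^{-1}$ and $G(p^{\ell})=0$ for $\ell\geq 3$. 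The one inaccuracy is in the multiplicativity step: with the CRT bijection $b\equiv b_{i}\ (\mathrm{mod}\ q_{i})$ that you name, the identity $\e(b/q)=\e(b_{1}/q_{1})\e(b_{2}/q_{2})$ is false (take $q_{1}=2$, $q_{2}=3$, $b=b_{1}=b_{2}=1$). The correct parametrisation is $b\equiv b_{1}q_{2}+b_{2}q_{1}\ (\mathrm{mod}\ q)$, under which the additive character does factor and, since $\gcd(b_{1}q_{2},q_{1})=\gcd(b_{1},q_{1})$, the gcd still splits as $\gcd(b_{1},q_{1})\gcd(b_{2},q_{2})$; with that one-line repair the rest of your argument goes through unchanged.
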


\subsection{Continuation of the proof}
Recalling the meaning of 
$\mathcal{M}(\theta)$ and 
$\mathcal{M}_{a,q}(\theta)$
in~(\ref{mtheta}) and (\ref{maq}),
we allude to
H\"{o}lder's inequality and Lemma~\ref{lem:eugke}
to obtain
\begin{align*}
\left|\int_{\alpha \notin \c{M}(\theta)} S(\alpha)\overline{Q(\alpha)}\da\right|
& \leq  \l(\int_{\alpha \notin \c{M}(\theta)} |S(\alpha)|^3\da\r)^{1/3}
\l(\int_0^1 |Q(\alpha)|^{3/2}\da\r)^{2/3} \\
& \leq 
\l(\int_{\alpha \notin \c{M}(\theta)} |S(\alpha)|^3\da\r)^{1/3}
P^{d/3}(\log P)^{4/3}
.\end{align*}
The proof of Lemma~\ref{lem:4.4b}
can be adapted straightforwardly to show that if
\begin{equation}
\label{eq:10erbar}
1>\delta+6 d \theta_0
\ \text{ and } \ 
\frac{n-\sigma_f}{2^{d-1}}-\frac{2}{3}(d-1)>\delta \theta_0^{-1}
\end{equation} 
then 
\[
\l(\int_{\alpha \notin \c{M}(\theta)} |S(\alpha)|^3\da\r)^{1/3}
\ll P^{n-\frac{d}{3}-\frac{\delta}{9}}
.\] Let $\eta:=(d-1)\theta_0$.
Under the assumptions of Theorem~\ref{thm:second}
and for
$\theta_0$ as in~\eqref{eq:10erbar},
one obtains the following inequality that is in analogy with Lemma~\ref{lem:4.4c}, 
\[
\#\{\b{S}_f\cap P\c{B}\}=
\sum_{q\leq P^{\eta}}
\sum_{\substack{ a\in \Z\cap [0,q)\\ \gcd(a,q)=1}}
\int_{\c{M}'_{a,q}(\theta_0)} 
S(\alpha)\overline{Q(\alpha)}
\da
+O\l(P^{n-\frac{\delta}{10}
}\r)
.\] 
Similarly as in the proof of~\eqref{eq:ents},
one may now acquire 
some $\delta_1=\delta_1(f)>0$
such that 
\begin{equation}\label{eq:thefanalg6783}
\frac{\#\{\b{S}_f\cap P\c{B}\}}{P^n}
-
\sum_{q\leq P^{\delta_1
}}
q^{-n}
\sum_{\substack{ a\in \Z\cap [0,q)\\ \gcd(a,q)=1}}
S_{a,q}
\int_{|\gamma|\leq P^{\eta}} 
I(\c{B};\gamma)
\frac{\overline{Q(a/q+\gamma P^{-d})}}{P^{d}}
\mathrm{d}\gamma
\ll
P^{-\delta_1}
.\end{equation} 
By Lemma~\ref{lem:MR1620828} 
we see that 
for suitably small 
$\delta_1$
and all 
$a$ as in~\eqref{eq:thefanalg6783}
and $q\leq P^{\delta_1}$
one has 
\[
Q(a/q+\gamma P^{-d})
=
\frac{G(q)}{\zeta(2)}
\left(\int_{(\min\{f_0(\c{B})\}-1) P^d}
^
{
(\max\{f_0(\c{B})\}+1) P^d}
\e(\gamma P^{-d} t) 
\mathrm{d}t
\right)
+O\left((1+|\gamma|) P^{d-\delta_2}\right)
.\] 
Therefore, as in the proof of Lemma~\ref{lem:finalnnn}, we may infer that there exists
a positive constant 
$\delta_3=\delta_3(f)$ such that the quantity 
$\#\{\b{S}_f\cap P\c{B}\}$
equals 
\begin{equation}\label{eq:cafg}    
\hspace{-0,2cm}
\frac{P^{n}}{\zeta(2)}
\! 
\Bigg(\sum_{q\leq P^{
\delta_1
}}
\frac{G(q)}{q^n}
\sum_{\substack{ a\in \Z\cap [0,q)\\ \gcd(a,q)=1}}
\hspace{-0,2cm}
S_{a,q}
\Bigg)
\!
\Bigg(
\int_{|\gamma|\leq P^{
\delta_1
}} 
\hspace{-0,2cm}
\frac{I(\c{B};\gamma)}{P^d}
\Bigg(\int_{(\min\{f_0(\c{B})\}-1) P^d}
^
{
(\max\{f_0(\c{B})\}+1) P^d}
\hspace{-0,1cm}
\e(-\gamma P^{-d} t) 
\mathrm{d}t
\Bigg) 
\mathrm{d}\gamma
\Bigg)
,\end{equation}
up to an error term which is 
$O (P^{n-\delta_3})$.
We shall now 
use Lemma~\ref{lem:MR1620828b}
to show that the sum over $q$ 
forms an absolutely convergent series.
Bringing into play
(\ref{Tfp})
and \cite[Lem. 25]{41}
we
obtain the bounds
\[
|G(p)T_f(p)| \ll p^{-1-(n-\sigma_f)/2}
\
\text{ and }
\
|G(p^2)
T_f(p^2)| \ll p^{-n+\sigma_f}
.\]
Hence, assuming
$n-\sigma_f \geq 2$, these two estimates  
allow to modify easily the proof of Proposition~\ref{deligne},
thereby showing that 
the 
abscissa
of convergence of the Dirichlet series of $|G(q)|T_f(q)$ is 
strictly negative. This provides  
$\delta_4=\delta_4(f)>0$
such that for all $x\geq 2$, one has 
$
\sum_{q>x}|G(q)|T_f(q) \ll x^{-\delta_4} 
$,
hence the sum over $q$ in~\eqref{eq:cafg} is
$\Pi'+O(P^{-\eta \delta_4})$, where $\Pi'$ is
\[
\sum_{q=1}^\infty
\frac{G(q)}{q^n}
\sum_{\substack{ a\in \Z\cap [0,q)\\ \gcd(a,q)=1}}
S_{a,q}
=\prod_p
\Bigg(
1
-p^{-2}(1-p^{-2})^{-1}
\Big(
\frac{1}{p^n}
\sum_{\substack{ a\in \Z\cap (0,p)}}
S_{a,p}
+
\frac{1}{p^{2n}}
\sum_{\substack{ a\in \Z\cap [0,p^2)\\ \gcd(a,p)=1}}
S_{a,p^2}
\Big)
\Bigg)
.\]
One can easily see, for example, by using 
orthogonality of characters of
$ \Z/p^2\Z$ to detect the condition $f(\mathbf{x})=0$,
 that 
\[\#\big\{\b{x}\in (\Z/p^2\Z)^n: f(\b{x})= 0 
 \big\}
=
p^{2(n-1)}
\Big(1+
\frac{1}{p^n}
\sum_{\substack{ a\in \Z\cap (0,p)}}
S_{a,p}
+
\frac{1}{p^{2n}}
\sum_{\substack{ a\in \Z\cap [0,p^2),p\nmid a }}
S_{a,p^2}
\Big)
,\]
from which we can show that $\Pi'/\zeta(2)$ is  
\[
\prod_p
\Bigg(
1-\frac{
\#\big\{\b{x}\in (\Z/p^2\Z)^n: f(\b{x})=0
 \big\}
}{p^{2n}}
\Bigg)
.\] 
This is in agreement with the infinite product in Theorem~\ref{thm:second}.

To deal with the integral in~\eqref{eq:cafg}
we observe that the transformation $t=P^d \mu$ gives 
\begin{equation}
\label
{eq:first bwv bound}
P^{-d}
\int_{(\min\{f_0(\c{B})\}-1) P^d}
^
{(\max\{f_0(\c{B})\}+1) P^d}
\e(-\gamma P^{-d} t) 
\mathrm{d}t
=
\int_{\min\{f_0(\c{B})\}-1}
^
{\max\{f_0(\c{B})\}+1}
\e(-\gamma \mu) 
\mathrm{d}\mu
\ll
\min\{1,|\gamma|^{-1}\}
,\end{equation}
hence
Lemma~\ref{lem:birch5.2}
shows that 
 the integral in~\eqref{eq:cafg} 
converges absolutely and 
equals
\begin{equation}
\label{eq:simplebb}
\int_{\gamma \in \R} 
I(\c{B};\gamma)
\Bigg(\int_{\min\{f_0(\c{B})\}-1 }
^
{\max\{f_0(\c{B})\}+1 }
\e(-\gamma  \mu)
\mathrm{d}\mu
\Bigg)
\mathrm{d}\gamma
+O(P^{- \delta_5})
\end{equation}
for some
$\delta_5=\delta_5(f)>0$. 

One can combine the bound~\eqref{eq:first bwv bound}
with Lemma~\ref{lem:birch5.2}
to show that the integral over $\gamma$ in~\eqref{eq:simplebb} equals $\mathrm{vol}(\c{B})$
using arguments that are entirely analogous with the case $k=0$ in Lemmas~\ref{lem:four} and~\ref{lem:prwinoskafes}. 
Thereby alluding to the well-known estimate
$$\#\{\Z^n\cap P\c{B}\}
=\mathrm{vol}(\c{B})P^n+O_{\c{B}}(P^{n-1})
 $$ allows us to
 conclude the proof of Theorem~\ref{thm:second}.

 \appendix

\section{The Bateman--Horn heuristics in many variables}
\label{s:bathorco} 
In this section 
we extend the 
Bateman--Horn 
heuristics
from the setting of univariate polynomials to that of 
polynomials with arbitrarily many variables; we do so 
because we were unable to 
find a reference for this extension in the  literature.

In $1958$, Schinzel \cite{SchS} formulated the
following
conjecture concerning prime values of univariate polynomials. 
\begin{conjecture}[\textbf{Schinzel's hypothesis H, \cite{SchS}}]
\label{con:sinz}
Let 
$f_1,\dots,f_r \in \mathbb{Z}[x]$ be univariate irreducible polynomials with
positive leading coefficient. If   \ 
$\prod_{i=1}^r
f_i$
has no repeated polynomial factors
and, for every prime $p$, there exists $x_p \in \mathbb{Z}$ such that 
$p \nmid f_1(x_p)\cdots f_r(x_p)$,
then there exist
infinitely many integers $m$ such that $f_1(m),\ldots,f_r(m)$ 
are all primes.
\end{conjecture}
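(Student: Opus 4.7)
The statement is Schinzel's Hypothesis H, which is one of the outstanding open conjectures in analytic number theory. Even for a single nonlinear $f_1$ in one variable it is unproved; the only known instance is $r=1$ with $f_1$ linear, which is Dirichlet's theorem on primes in arithmetic progressions. I would therefore not attempt a genuine proof. What I can describe is the heuristic framework that motivates the conjecture, the partial approaches that have been deployed, and the structural reason the problem resists current technology.

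The supporting heuristic is the Bateman--Horn philosophy. Set $F(x)=\prod_{i=1}^r f_i(x)$ of degree $D=\sum_i \deg f_i$. The ``probability'' that a random integer of size $N$ is prime is $1/\log N$, so if the values $f_i(x)$ were independent one would expect the count of $x\le N$ with all $f_i(x)$ prime to be of order $\prod_i \bigl(\deg(f_i)\log N\bigr)^{-1}$ times a product of local correction factors $\prod_p (1-p^{-1})^{-r}\bigl(1-\#\{x\in \F_p:F(x)\equiv 0\}/p\bigr)$. The assumptions that $\prod_i f_i$ is squarefree as a polynomial and that for each $p$ some $x_p$ makes $F(x_p)$ coprime to $p$ are exactly what forces all local factors to be strictly positive and makes the Euler product converge to a positive limit (by a Deligne-type bound on the singular series, as used inside the paper). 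A rigorous derivation of the conjectured asymptotic from this heuristic, in the multivariate analogue, is precisely the content of the appendix.

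The natural analytic approach is sieve theory, but here one hits the parity problem: combinatorial sieves cannot, on their own, distinguish integers with an even versus odd number of prime factors, so they cannot detect primes in a sequence with no additional arithmetic structure beyond its archimedean and local densities. Progress has only been made when extra structure is available: bilinear/Type-II information from norm forms (Friedlander--Iwaniec, Heath-Brown, Heath-Brown--Moroz), completely split polynomials (\cite{BLS}), incomplete norm forms via geometry of numbers (Maynard), or, crucially for the present paper, the injection of primes through a Vaughan-type identity combined with the Hardy--Littlewood circle method, which is precisely what drives Theorem~\ref{thm:main} and requires the polynomial to live in sufficiently many variables relative to its degree.

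The hard part is therefore the univariate, single-polynomial regime, where none of these handles exist. A proof in full generality would require either a genuine breach of the parity barrier or an entirely new analytic input (for instance, on $L$-functions attached to the associated varieties) going well beyond Deligne's estimates and the current sieve-circle-method toolkit. In short, my ``plan'' is to acknowledge that the conjecture sits outside the reach of present techniques and to note that the paper's Theorem~\ref{thm:main} is precisely the kind of partial progress one can make once the number of variables is large enough for the circle method to see primes; going from that regime down to one variable is the fundamental obstacle.
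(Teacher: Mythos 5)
You are right not to attempt a proof: the statement is labelled a \emph{conjecture} in the paper (Schinzel's Hypothesis H), and the paper itself offers no proof of it, only the citation to Schinzel--Sierpi\'nski and, in the appendix, the Bateman--Horn-style heuristic justification that you also sketch. Your assessment of its status (open except for $r=1$ linear, i.e.\ Dirichlet) and your heuristic discussion match the paper's own treatment, so there is nothing to correct.
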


This conjecture was later
refined by
Bateman and Horn~\cite{BatH} who, based on the 
Cram\'er
 model
and the heuristics behind the Hardy--Littlewood conjecture (see~\cite[pg. 6-8]{Gran}),
gave a quantitative version of Schinzel's conjecture.
\begin{conjecture}[\textbf{Bateman--Horn's conjecture, \cite{BatH}}]
\label{con:simp}
	Keep the assumptions of Conjecture~\ref{con:sinz}.
	Then the  number of integers  $m\in [1,P]$ such that 
	every $f_1(m),\ldots,f_r(m)$ is prime is asymptotically equivalent to the 
following quantity as $P\to+\infty$,
$$\left(
\prod_{p\ \text {prime }}
\frac{
(1-p^{-1}\#\{
x\in \F_p:f_1(x)\cdots f_r(x)=0\})
}
{(1-1/p)^r}
\right)
\frac{1}{\deg(f_1)\cdots \deg(f_r)}
\int_2^P\frac{\mathrm{d}x}{(\log x)^r}.$$ 
\end{conjecture}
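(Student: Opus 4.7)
Strictly speaking, no proof of this statement is known and the case $r=1$ with $\deg(f_1)\geq 2$, or $r=2$ with two linear polynomials (twin primes), is a famous open problem. I therefore sketch the Bateman--Horn heuristic that motivates the formula, in the spirit of the rigorous derivation of~\eqref{eq:collusion} carried out for multivariate polynomials with many variables in Theorem~\ref{thm:main}.

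The plan is to combine a Cram\'er-model density with local correction factors at every prime. First, invoking the Cram\'er model, a random integer $n\in[1,X]$ is prime with probability about $1/\log n$, so under na\"ive independence the probability that $f_1(m),\ldots,f_r(m)$ are simultaneously prime should be
\[
\prod_{i=1}^r \frac{1}{\log|f_i(m)|}\sim \prod_{i=1}^r \frac{1}{\deg(f_i)\log m}
\]
for large $m$. Second, I would correct this for the actual distribution of the tuple $(f_1(m),\ldots,f_r(m))$ modulo each prime $p$: a truly random $r$-tuple of integers has no coordinate divisible by $p$ with probability $(1-1/p)^r$, whereas the probability (as $m$ ranges over $\Z/p\Z$) that $p\nmid f_i(m)$ for every $i$ equals
\[
1-p^{-1}\#\{x\in \F_p:f_1(x)\cdots f_r(x)=0\}.
\]
The ratio of these two quantities is the local correction factor at $p$, and the product over all primes forms the singular series.

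Third, assuming independence across distinct primes (this is the key heuristic leap), the expected number of $m\in[1,P]$ for which every $f_i(m)$ is prime becomes
\[
\sum_{m=2}^{P}\Bigg(\prod_p \frac{1-p^{-1}\#\{x\in \F_p:f_1(x)\cdots f_r(x)=0\}}{(1-1/p)^r}\Bigg)\prod_{i=1}^r\frac{1}{\deg(f_i)\log m},
\]
and replacing the sum by the integral $\int_2^P \mathrm{d}x/(\log x)^r$ yields the stated asymptotic. The absolute convergence of the singular series is justified using the Chebotarev density theorem, which forces $\#\{x\in\F_p:f_i(x)=0\}$ to average to $1$ as $p$ varies, so each local factor is $1+O(p^{-2})$ on average.

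The main obstacle is of course that this argument cannot be made rigorous with existing tools. Any serious attempt via the Hardy--Littlewood circle method, as used in the proof of Theorem~\ref{thm:main}, collapses on the minor arcs: with a single variable $m$ and $\deg(f_i)\geq 2$, Weyl differencing and the $L^2$-type bounds exploited in Lemma~\ref{lem:4.4b} yield no saving, since the number of variables $n=1$ is far below the threshold $n-\sigma_f\geq 4$ required by~\eqref{halfassumption}. The only cases within reach at present are $r=1$ with $\deg(f_1)=1$ (Dirichlet) and certain systems of linear polynomials via the nilpotent-Hardy--Littlewood method of Green--Tao--Ziegler~\cite{GTZ}; no approach is known that handles a single nonlinear univariate polynomial, and conquering that case is presumably a prerequisite for the general conjecture above.
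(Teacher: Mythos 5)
You are right that this statement is an open conjecture with no known proof, and the paper accordingly states it as Conjecture~\ref{con:simp} without proof; the heuristic you sketch (Cram\'er model, local correction factors at each prime, independence across primes, and convergence of the singular series via the prime ideal/Chebotarev theorem) is essentially the same derivation the paper itself presents in Appendix~\ref{s:bathorco} for the multivariate generalization Conjecture~\ref{conj}. Your assessment of why the circle method fails in one variable is also consistent with the paper's framing, so there is nothing to correct.
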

The convergence of the infinite
product  
is established in \cite{BatH}
using the prime ideal theorem.
These two conjectures lie very deep 
and
imply a number of notoriously difficult conjectures as immediate corollaries (the twin primes conjecture
among others; see \cite{SchS} for a non exhaustive list of implications). 
There are
applications to the arithmetic of
algebraic varieties,
see~\cite{Colliot},~\cite{Arne} or \cite{WSko}, 
where
Schinzel's hypothesis is assumed in order to
prove that the Hasse principle and
weak approximation holds.

Recall that for a polynomial $f$ we denote by $f_0$
the top degree part of $f$.
Let us now record the 
multivariable
version of the Bateman--Horn conjecture.
\begin{conjecture}[\textbf{Extension of the Bateman--Horn conjecture}]
	\label{conj}
Assume that we are given irreducible
polynomials
$f_1,\dots,f_r \in \mathbb{Z}[x_1,\ldots,x_n]$ such that
$\prod_{i=1}^r f_i$
has no repeated polynomial factors. Moreover, we assume that
$\c{B}\subset \R^n$ is a non-empty box 
such that
$f_{i0}(\c{B})\subset (1,\infty)$ for all 
$i\in \{1,\ldots ,r\}$.
Denote 
by  $\pi_{f_1,\ldots,f_r}(P\c{B})$
the cardinality 
of the set of 
integer vectors 
$\b{x} \in \Z^n\cap P\c{B}$
for which every 
$f_1(\b{x}),\dots, f_r(\b{x}) $
is a positive
prime number.
Then 
 $\pi_{f_1,\ldots,f_r}(P\c{B})$
is asymptotic to the following quantity as $P\to+\infty$, 
$$ 
\left(
\prod_{p\ \text {prime }}
\frac{
(1-p^{-n}\#\{
\mathbf{x}\in \F_p^n:f_1(\mathbf{x}
)\cdots f_r(\mathbf{x})=0\})
}
{(1-1/p)^r}
\right) 
\int_{P\c{B}} \frac{\mathrm{d}\b{x}}{\prod_{i=1}^r \log f_{i0}(\b{x})} 
. $$  
\end{conjecture}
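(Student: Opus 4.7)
My plan is to derive the asymptotic formula heuristically in the spirit of Bateman--Horn, since a fully rigorous argument lies well beyond current technology: even the case $r=1$, $n=1$ (apart from a single linear polynomial, where it reduces to Dirichlet's theorem) is entirely open. The derivation combines Cram\'er's probabilistic model of primes with Euler-product local corrections, exactly as in \cite{BatH}, except that the Lebesgue measure on $P\mathcal{B}\subset\mathbb{R}^n$ replaces the counting measure on $[1,P]$.

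Fix $\mathbf{x}\in\mathbb{Z}^n\cap P\mathcal{B}$. The Cram\'er heuristic asserts that a ``random'' positive integer of size $N$ is prime with probability $1/\log N$. Treating $f_1(\mathbf{x}),\ldots,f_r(\mathbf{x})$ as independent random integers of the expected size $f_{i,0}(\mathbf{x})$ (justified because the assumption $f_{i,0}(\mathcal{B})\subset(1,\infty)$ forces $f_i(\mathbf{x})\sim f_{i,0}(\mathbf{x})$ uniformly for $\mathbf{x}\in P\mathcal{B}$), the naive expected count becomes
\[
\int_{P\mathcal{B}}\prod_{i=1}^r\frac{\mathrm{d}\mathbf{x}}{\log f_{i,0}(\mathbf{x})}.
\]
To correct for divisibility correlations, for each prime $p$ one compares the true probability that $p$ divides none of $f_1(\mathbf{x}),\ldots,f_r(\mathbf{x})$ under a uniform $\mathbf{x}\in(\mathbb{Z}/p\mathbb{Z})^n$, namely
\[
1-p^{-n}\#\{\mathbf{x}\in\mathbb{F}_p^n:f_1(\mathbf{x})\cdots f_r(\mathbf{x})=0\},
\]
to the baseline ``$r$ independent random integers'' probability $(1-1/p)^r$. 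The ratio is the local correction at $p$, and the Euler product of these ratios is the predicted singular series.

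Convergence of this Euler product should be handled in the manner of \cite{BatH}: inclusion--exclusion reduces the count of zeros of $\prod_i f_i$ on $\mathbb{F}_p^n$ to the counts of zeros of the individual irreducible $f_i$, for which the Lang--Weil estimate (or the Chebotarev-type argument applied to a splitting field of $f_i$, as in the original Bateman--Horn paper) yields $p^{-n}\#\{\mathbf{x}\in\mathbb{F}_p^n:f_i(\mathbf{x})=0\}=p^{-1}+O(p^{-1-\delta})$ on average over $p$. Combined with a bound for the non-average contributions coming from the Jacobian locus, this ensures that each local factor differs from $1$ by an amount that is summable over primes, securing absolute convergence.

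The principal obstacle to upgrading this heuristic to a theorem is the joint circle-method analysis of $r$ polynomials simultaneously. One would orthogonalise as in \S\ref{s:utilising} to obtain
\[
\pi_{f_1,\ldots,f_r}(P\mathcal{B})=\int_{[0,1]^r}\Bigg(\sum_{\mathbf{x}\in\mathbb{Z}^n\cap P\mathcal{B}}\e\Big(\sum_{i=1}^r\alpha_i f_i(\mathbf{x})\Big)\Bigg)\overline{W_1(\alpha_1)\cdots W_r(\alpha_r)}\,\mathrm{d}\boldsymbol{\alpha},
\]
where each $W_i$ is the prime-detecting sum attached to the range of $f_i$. The difficulty is that Birch's differencing, which underlies Lemma~\ref{lem:birch1233df}, controls $\sum_{\mathbf{x}}\e(\alpha F(\mathbf{x}))$ for a single $F$; the required Weyl bound for the combination $\sum_i\alpha_i f_i$ depends on the singular locus of that combination as a function of $\boldsymbol{\alpha}$ and can degenerate sharply when the $\alpha_i$ satisfy rational relations. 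Establishing a uniform minor-arc bound in terms of the combined invariants of $f_1,\ldots,f_r$, and then matching it with a product over $i$ of Siegel--Walfisz-type expansions for the $W_i$ on the major arcs, is the key open step; I would expect the range of validity to be even more restrictive than~\eqref{halfassumption} and to involve the singular loci of generic linear combinations of the $f_i$.
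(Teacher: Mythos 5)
Your heuristic derivation is essentially the same as the paper's (Appendix~\ref{s:bathorco}): both rest on the Cram\'er model with Euler-product local corrections, the paper phrasing it by conditioning on residues modulo $\prod_{p\le z}p$ and invoking the arithmetic-progression form of the heuristic, while you phrase it equivalently as a ratio of local densities to the baseline $(1-1/p)^r$. The only substantive difference is in justifying convergence of the singular series, where the paper uses Serre's prime number theorem for schemes over $\Z$ plus partial summation (Remark~\ref{rem:serre}) rather than your Lang--Weil/Chebotarev route; both are standard and correct.
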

\begin{remark}
\label{rem:serre}
Before providing 
the heuristics behind Conjecture~\ref{conj}
let us note that one can prove that 
the product over $p$ converges.
Indeed, a version of the prime number theorem for schemes over $\Z$ that can be
found in the work of Serre~\cite[Cor. 7.13]{Serre} implies  that
\[
\sum_{p\leq x}
\#\{\mathbf{x}\in \F_p^n:f_1(\mathbf{x})\cdots f_r(\mathbf{x})=0\}
=
r\Bigg(\int_2^{x^{n}}\frac{\mathrm{d}t}{\log t}\Bigg)
+O\l (x^n \e^{-c\sqrt{\log x}}\r )
\] for some $c=c(f_1,\ldots,f_r)>0$. Now partial summation implies that
$$
\sum_{p\leq x}
p^{-n}\#\{\mathbf{x}\in \F_p^n:f_1(\mathbf{x})\cdots f_r(\mathbf{x})=0\}=
r\log\log x +C_1+O\left(\frac{1}{\log x}\right)
$$
for some constant $C_1$. Hence the following series
converges,
$$
\sum_{p}\left( p^{-n}\#\{\mathbf{x}\in \F_p^n:f_1(\mathbf{x})\cdots f_r(\mathbf{x})=0\}-\frac{r}{p}\right)
,$$
which in turn yields the convergence of the product over $p$ appearing in the statement of the conjecture.
\end{remark}
We end this section by 
adapting
the heuristics behind Conjecture~\ref{con:simp} to the multivariate case. 
Recall that the Cram\'er model asserts that a random positive integer $m$ of size $X$
has 
probability $1/\log X$ of being a prime. 
An analogous statement can be made if the extra condition 
that $m$ lies in a primitive arithmetic progression modulo
$q$ for some positive integer 
$q$
is added, in this case
the probability is 
$1/(\phi(q) \log X)$ owing to Dirichlet's theorem on primes in arithmetic progressions.
This implies
that 
for coprime
 $a,q$, the conditional
probability that a positive integer $m$ of size $X$ is prime provided that $m\equiv a \md{q}$
equals
\begin{equation}
\label{cramerica}
\mathrm{Prob}[m\sim X \text{ is a prime } |  \ m\equiv a\md{q}]
\approx
\frac{
1/(\phi(q)\log X)
}{
1/q}
=\frac{q}{\phi(q) \log X}
.\end{equation}
In the setting of Conjecture~\ref{con:simp}
observe that 
for typical
$\b{x}\in \Z^n$
the integer $f_i(\b{x})$
can be prime 
only if $f_i(\b{x})$ is coprime to all small primes.
Therefore, letting 
$z=z(P)$ be 
a function 
that slowly tends to infinity with $P$
and 
letting 
$\c{P}
:=\prod_{p\leq z}p$,
we see that 
\begin{equation}
\label{eq:acinto}
\frac{\pi_{f_1,\ldots,f_r}(P\c{B})}{\#\{\Z^n\cap P\c{B}\}}
\approx 
\sum_{\substack{\b{a} \in (\Z/\c{P}\Z)^n\\
\forall i \in \{1,\dots,r\},\hspace{1mm} f_i(\b{a})\in (\Z/\c{P}\Z)^{\times}
}}
\hspace{-0,3cm}
 \mathrm{Prob}[x_i\equiv a_i\md{\c{P}} \text{ for all  } 1\leq i \leq n]
\cdot
\P_{\b{a},\c{P}}
,\end{equation}
where $\P_{\b{a},\c{P}}$ denotes the joint
probability defined through
\[\P_{\b{a},\c{P}}:=
\mathrm{Prob}
[m_i\sim P^{\deg(f_i)} \text{ is a prime for all } 1\leq i \leq r \ |  \ m_i\equiv f_i(\b{a}) \md{\c{P}}]
.\]
This is because the integer 
$f_i(\b{x})$ is typically of size $P^{\deg(f_i)}$ when 
$\b{x} \in P\c{B}$ and the
values 
$f_i(\b{x})$ are thought to behave like a random integer $m_i$ lying
in the arithmetic progression $f_i(\b{a})\md{\c{P}}$, provided that $\b{x}\equiv \b{a}\md{\c{P}}$.
Note that for $i\neq j$ the polynomials $f_i$ and $f_j$ are coprime
due to the  assumption
that $\prod_i f_i$ has no repeated factors,
therefore it is reasonable to expect that for $i\neq j$ the integer values 
$f_i(\b{x})$ and $f_j(\b{x})$ behave independently.
This suggests that   
\[
\P_{\b{a},\c{P}}=
\prod_{i=1}^r
\mathrm{Prob}
[m_i\sim P^{\deg(f_i)
}\text{ is a prime } |  \ m_i\equiv f_i(\b{a})\md{\c{P}}]
\]
and by~\eqref{cramerica}
one now gets
$
\P_{\b{a},\c{P}}
=
\c{P}^r
\phi(\c{P})^{-r}(\log P)^{-r}
\prod_{i=1}^r (\deg(f_i))^{-1}
$.
Substituting  
this into~\eqref{eq:acinto}
and noting that 
$\mathrm{Prob}[x_i\equiv a_i\md{\c{P}}]=1/\c{P}$
yields
\[\frac{
\pi_{f_1,\ldots,f_r}(P\c{B})
}{\mathrm{vol}(\c{B})P^n}
\approx 
\Big(\frac{\c{P}}{\phi(\c{P})\log P}\Big)^r 
\frac{1}{\prod_{i=1}^r \deg(f_i)}
\frac{1}{\c{P}^n}
\hspace{-0,2cm}
\sum_{\substack{\b{a} \in (\Z/\c{P}\Z)^n\\
\forall i \in \{1,\dots,r\},\hspace{1mm} f_i(\b{a})\in (\Z/\c{P}\Z)^{\times} 
}}
1
.\]
 The sum over $\b{a}$
forms 
a multiplicative function of $\c{P}$
that can be evaluated as
\[\prod_{p\leq z}
\big(
p^n-\#\{\mathbf{x}\in \F_p^n:f_1(\b{x})\cdots f_r(\b{x})=0\}
\big)
.\]
Putting everything together shows that 
we expect
$\pi_{f_1,\ldots,f_r}(P\c{B})$
to be approximated by
\[
\frac{\mathrm{vol}(\c{B})P^n}{(\log P)^r\prod_{i=1}^r \deg(f_i)}
\prod_{p\leq z}
\Bigg(
\bigg(\frac{p}{p-1}\bigg)^r
\bigg(\frac{p^n-\#\{\mathbf{x}\in \F_p^n:f_1(\b{x})\cdots f_r(\b{x})=0\}}{p^n}\bigg)
\Bigg)
.\]
In view of Remark~\ref{rem:serre} the product over $p\leq z(P)$ converges 
to the product in Conjecture~\ref{conj}
as $P\to+\infty$.
For $\b{x}\in   P\c{B}$
we have 
$f_{i0}(\b{x}) \asymp
 P^{\deg(f_{i0})}$
and using
$\deg(f_i)=\deg(f_{i0})$
we get
\[
\frac{\mathrm{vol}(\c{B})P^n}{(\log P)^r\prod_{i=1}^r \deg(f_i)}
=
\frac{\int_{P\c{B}} 1 \mathrm{d}\b{x}}{\prod_{i=1}^r \log (P^{\deg(f_i)})}
\asymp
\int_{P\c{B}} \frac{\mathrm{d}\b{x}}{\prod_{i=1}^r \log f_{i0}(\b{x})} 
,\]
thereby
concluding our explanation of the 
asymptotic in Conjecture~\ref{conj}.

\end{document}